\newtheorem{theorem}{Theorem}[section]
\newtheorem{thm}{Theorem}[section]
\newtheorem{lemma}[theorem]{Lemma}
\newtheorem{proposition}[theorem]{Proposition}
\newcommand{\R}{\mathbb{R}}
\newcommand{\f}{\frac}
\newcommand{\beq}{\begin{equation}}
\newcommand{\eeq}{\end{equation}}
\newcommand{\beqq}{\begin{equation*}}
\newcommand{\eeqq}{\end{equation*}}
\theoremstyle{definition}
\newtheorem{definition}[theorem]{Definition}
\theoremstyle{remark}
\newtheorem{remark}[theorem]{Remark}
\numberwithin{equation}{section}
\begin{document}

\address{Chenjie Fan
\newline \indent Academy of Mathematics and Systems Science, CAS\indent 
\newline \indent  
Beijing, China.\indent }
\email{cjfanpku@gmail.com}

\address{Zehua Zhao
\newline \indent Department of Mathematics, University of Maryland\indent 
\newline \indent  William E. Kirwan Hall, 4176 Campus Dr. College Park, MD 20742-4015\indent }
\email{zzh@umd.edu}

\title[stochastic nonlinear Schr\"odinger equations with a multiplicative noise]{On long time behavior for stochastic nonlinear Schr\"odinger equations with a multiplicative noise}
\author{Chenjie Fan and Zehua Zhao}
\maketitle

\begin{abstract}
In this article, we study  Stochastic mass critical nonlinear Schr\"odinger equations with a multiplicative noise in 3D with a slight time decay ($\langle t \rangle^{-\epsilon}$), and prove associated space-time bound and scattering behavior.
\end{abstract}

\section{Introduction}
\subsection{Statement of main results}
In this article, we aim to study the long time behavior for Stochastic mass critical nonlinear Schr\"odinger equations with a multiplicative noise,
\begin{equation}\label{eq: snls}
\begin{cases}
du=-i(\Delta u-|u|^{\frac{4}{d}}u)dt-iu \langle t \rangle ^{-\gamma}dW_{t}-\frac{1}{2}\langle t \rangle^{-2\gamma}W^{2}udt,
u_{0}\in L_{\omega}^{\infty}L^2_{x}(\mathbb{R}^{d})
\end{cases}
\end{equation}

We will focus\footnote{We follow the convention for Brownian motion, we use $B_{t}$ rather than $B(t)$ to denote the value of $B$ at $t$.} on case $W(x,t)=V(x)B_{t}$, $V$ Schwarz and real.  We have also impose an extra time decay $\langle t \rangle^{-\gamma}$.

Though the noise is of such a simple form, very few (except for global existence) seem to be known about the long time dynamic of \eqref{eq: snls}, if $\gamma=0$, i.e. when no extra time decay of noise is imposed.

Indeed, if $\gamma=0$, even the linear case seems not well understood. See \cite{FX} for a linear estimate while extra smallness (but no time decay ) is imposed to the noise. 

On the other hand, if one take $\gamma>1$, then it should be not surprising that one may reduce the study of \eqref{eq: snls} to a local theory, since 
\begin{equation}
\int_{0}^{\infty}\langle s \rangle^{-\gamma}ds<\infty, \gamma>1.
\end{equation}
The next natural  threshold will be $\gamma>1/2$. Since from the view point of Burkholder, one has 
\begin{equation}
\int_{0}^{\infty} \langle s \rangle^{-\gamma}dB_{s}\sim (\int_{0}^{\infty}\langle t \rangle^{-2\gamma}dt)^{1/2}<\infty, \gamma>1/2.
\end{equation}
This view point was illustrated (from a different perspective) in the very interesting recent work of \cite{HRZ}, which covers noise of finite quadratic variation in time. In particular, If one replaces $\langle t \rangle^{-\gamma}$ in \eqref{eq: snls} by $g(t)$ such that 
\begin{equation}
\int |g(t)|^{2}dt<\infty,
\end{equation}
then scattering behavior can be obtained\footnote{Both our work and \cite{HRZ} does not need $V$ be Schwarz and the noise is not necessarily of product form, $V(x)B_{t}$. We neglect all those technical issues for the moment and focus on the problem of reducing the time decay need.}.
 This, in particular, includes all $\gamma>1/2$. We note that the work of \cite{HRZ} also covers energy based model.

One very interesting and highly nontrivial problem is to understand what will happen if one poses $\gamma=0$, but this seems very hard even for $W(x,t)=V(x)B(t)$, $V$ Schwarz and small, with Schwarz and small initial data.

In the current article, we prove that scattering behavior exists for all $\gamma>0$ in dimension 3, for all $L_{x}^{2}$ data. Our main result is 
\begin{thm}\label{thm: main}
When the dimension $d=3$, for $\gamma>0$, let $u$ be the global solution to \eqref{eq: snls}, we have for all $(\alpha_{0},\beta_{0})$ which are admissible Strichartz pair and $\alpha_{0}>4$, 
\begin{equation}\label{eq: spacetimebound3d}
 \|u\|_{L_{\omega}^{\alpha_{0}}L_{t}^{\alpha_{0}}L_{x}^{\beta_0}([0,\infty)\times \mathbb{R}^{3})}\lesssim_{\|u_{0}\|_{L_{\omega}^{\infty}L_{x}^{2}}, \alpha_{0},\gamma} 1.
\end{equation}
and further more, the solution scatters in the sense that, there exists $u^{+}\in L_{\omega}^{\infty}L_{x}^{2}$, such that 
\begin{equation}\label{eq: scattering3d}
\|u(t)-e^{it\Delta}u^{+}\|_{L_{\omega}^{\rho^{*}}L_{x}^{2}} \rightarrow 0, \text{ as }t \rightarrow \infty, \text{ for some } \rho^{*}>1.
\end{equation}
In particular, one has scattering in probability.
\begin{equation}\label{eq: conpro}
\forall \delta>0, \lim_{t\rightarrow \infty}\mathbb{P}\left(\|u(t)-e^{it\Delta}\|_{L_{x}^{2}}\geq \delta\right)\rightarrow 0.
\end{equation}
and we indeed have scattering almost surely,
\begin{equation}\label{eq: almostsure}
\|u(t)-e^{it\Delta}u^{+}\xrightarrow{t\rightarrow \infty}\|_{L^2_x} 0, a.s.
\end{equation}
\end{thm}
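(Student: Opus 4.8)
The plan is to run a perturbative/bootstrap argument in which the multiplicative noise term $-iu\langle t\rangle^{-\gamma}dW_t$ together with the Stratonovich-type correction $-\tfrac12\langle t\rangle^{-2\gamma}W^2u\,dt$ is treated as a perturbation of the deterministic mass-critical NLS, for which the global space-time bound of Dodson is available. First I would pass to the Duhamel formulation for $u$ on a generic interval $I=[T,T']$, writing $u(t)=e^{i(t-T)\Delta}u(T)$ plus the deterministic nonlinear Duhamel term plus a stochastic Duhamel term $\int_T^t e^{i(t-s)\Delta}\big(-iu(s)\langle s\rangle^{-\gamma}\big)\,dW_s$ plus the correction term. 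The key analytic input is a stochastic Strichartz estimate: using the Burkholder–Davis–Gundy inequality in the $L^{\alpha_0}_\omega$ norm, the stochastic convolution is controlled in $L^{\alpha_0}_\omega L^{\alpha_0}_t L^{\beta_0}_x(I)$ by $\big(\int_I \langle s\rangle^{-2\gamma}\|u(s)\|_{\text{(appropriate Strichartz)}}^2\,ds\big)^{1/2}$ up to constants depending on $V$ being Schwartz. The crucial point is that although $\int_0^\infty \langle s\rangle^{-2\gamma}\,ds=\infty$ for $\gamma\le 1/2$, on a tail interval $[T,\infty)$ we have $\int_T^\infty \langle s\rangle^{-2\gamma}\,ds$ small only when $\gamma>1/2$; for $0<\gamma\le 1/2$ one instead exploits that the $L^2$-based Strichartz norm of $u$ on a \emph{single dyadic-type block} is finite (this is exactly what the deterministic theory gives once the global bound \eqref{eq: spacetimebound3d} is in hand) so the weight $\langle s\rangle^{-2\gamma}$, while not integrable, still produces a gain on each block relative to its length.

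The core of the argument is therefore a two-layer induction. First, establish \eqref{eq: spacetimebound3d} by a good-interval decomposition: partition $[0,\infty)$ into finitely many (random, but with controlled count) intervals on each of which the stochastic and correction contributions are small in the relevant Strichartz norm, apply the deterministic stability theory for mass-critical NLS on each piece, and sum. The number of intervals is controlled by $\|u_0\|_{L^\infty_\omega L^2_x}$, $\gamma$, and $\alpha_0$; here the constraint $\alpha_0>4$ enters because in $d=3$ the admissible exponent $\alpha_0=\beta_0=10/3$ is the scattering-critical one and one needs a slightly subcritical-in-time pair with some room to absorb the $\langle t\rangle^{-\gamma}$ loss via Hölder in time against the weight. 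Once \eqref{eq: spacetimebound3d} holds, scattering \eqref{eq: scattering3d} follows by showing $\{e^{-it\Delta}u(t)\}$ is Cauchy in $L^{\rho^*}_\omega L^2_x$ as $t\to\infty$: the deterministic Duhamel tail is Cauchy by the finite global Strichartz norm, and the stochastic tail $\int_T^{T'} e^{-is\Delta}(-iu(s)\langle s\rangle^{-\gamma})\,dW_s$ is controlled in $L^{\rho^*}_\omega L^2_x$ by BDG against $\big(\int_T^\infty \langle s\rangle^{-2\gamma}\|u(s)\|^2\,ds\big)^{1/2}$, which tends to $0$ as $T\to\infty$ once we know $u\in L^2_t$-weighted via \eqref{eq: spacetimebound3d}; the choice $\rho^*>1$ (rather than $\rho^*=\infty$) is what makes the BDG step go through with the available integrability of $u$.

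From the $L^{\rho^*}_\omega L^2_x$ convergence \eqref{eq: scattering3d} one extracts \eqref{eq: conpro} immediately by Chebyshev, and for almost-sure scattering \eqref{eq: almostsure} I would upgrade the convergence along a sequence $t_n\to\infty$ by a Borel–Cantelli argument: choosing $t_n$ so that $\sum_n \|u(t_n)-e^{it_n\Delta}u^+\|_{L^{\rho^*}_\omega L^2_x}^{\rho^*}<\infty$ gives a.s. convergence along $t_n$, and then continuity in $t$ of $t\mapsto \|u(t)-e^{it\Delta}u^+\|_{L^2_x}$ together with a uniform-in-$t$ tail estimate (supremum over $t\in[t_n,t_{n+1}]$ controlled again by a maximal BDG inequality) closes the gap between consecutive $t_n$. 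The step I expect to be the main obstacle is the first one: making the good-interval decomposition work when $0<\gamma\le 1/2$, i.e. extracting a genuine smallness of the stochastic perturbation on each block despite the non-integrable weight. This requires carefully interpolating the weighted $L^2$-in-time quantity against the finite global Strichartz norm — essentially a Hölder argument in which $\alpha_0>4$ buys the needed slack — and controlling the total number of blocks, which is where most of the technical work (and the dependence of the implicit constant on $\gamma$, degenerating as $\gamma\downarrow 0$) will reside.
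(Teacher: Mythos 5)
Your proposal misses the two ideas the argument actually hinges on, and both omissions are fatal for small $\gamma$. First, you propose to treat the It\^o correction term $-\tfrac12\langle t\rangle^{-2\gamma}V^2u\,dt$ (together with the noise) as a perturbation of the deterministic mass-critical NLS and invoke Dodson plus stability on good intervals. But a priori, and even after the global bound \eqref{eq: spacetimebound3d} is known, $u$ only decays like $t^{-1/4+}$ in the averaged $L_t^{\alpha_0}L_x^{\beta_0}$ sense with $\alpha_0>4$, so $\langle t\rangle^{-2\gamma}V^2u$ is nowhere near the dual Strichartz space (one needs $L_t^2$-integrability) when $\gamma$ is small; no interval decomposition makes it small there, because the obstruction is integrability of the weight, not size. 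The paper's way out is structural, not perturbative: since $V$ is real, this term is a damping term, and it is absorbed into the linear flow, replacing $e^{it\Delta}$ by the propagator $H(t,s)$ of the damped equation $iw_t+\Delta w=-i\langle t\rangle^{-2\epsilon_0}V^2w$, for which dispersive and Strichartz estimates still hold and for which Dodson's result transfers (damped NLS, Lemma \ref{lem: gwpdnls}), together with a modified stability statement (Proposition \ref{prop: modistab}) in which the error is measured in a Strichartz norm rather than its dual. Your bootstrap, as written, would only close for $\gamma$ bounded away from $0$ (roughly the regime $\gamma>\gamma_0$ with $\gamma_0<1/2$ that the paper dispatches as a warm-up), and the smallness driving the continuity argument comes from the $\langle s_0\rangle^{-\epsilon_0/2}$ gain obtained via the dispersive kernel $(t-s)^{-1+2\eta_0}$ of $H(t,s)$ combined with Young/H\"older in time, which is also where the constraint $\alpha_0>4$ (equivalently $\beta_0<3$) really comes from, not from proximity to the scattering-critical pair $(10/3,10/3)$.

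Second, your scattering step asserts that the tail $\bigl(\int_T^\infty \langle s\rangle^{-2\gamma}\|u(s)\|^2\,ds\bigr)^{1/2}$ tends to $0$ ``once we know $u\in L^2_t$-weighted via \eqref{eq: spacetimebound3d}''; this is precisely what \eqref{eq: spacetimebound3d} does not give. With $u\in L_\omega^{\alpha_0}L_t^{\alpha_0}L_x^{\beta_0}$, H\"older in time makes that integral finite only when roughly $2\gamma>1-2/\alpha_0$, i.e.\ $\gamma>1/4$ or so, and the paper stresses exactly this: the global Strichartz bound does not directly imply the scattering asymptotics here, unlike the deterministic case. The actual proof runs a finite iteration (Lemmas \ref{lem: m1}, \ref{lem: splitintotwo}, \ref{lem: mn}, \ref{lem: splitagain}): using the refined conclusion \eqref{eq: extrakey} of the modified stability, one splits $u=u_1+u_2$ with $u_1\in L_\omega^2L_t^2L_x^6$ and $u_2$ controlled by a maximal stochastic term $M_n^*$ whose $\omega,t$-integrability exponent improves by a definite amount $\epsilon_0/10$ at each step (here the pairs $(\alpha_n,\beta_1)$ need not be admissible), until one reaches exponent $2$ and concludes $\||V|^{1/2}u\|_{L_\omega^2L_t^2L_x^2}\lesssim 1$ (\eqref{eq: fffinal}); only then do the stochastic and damping Duhamel tails converge, giving \eqref{eq: scattering3d}, with the a.s.\ statement \eqref{eq: almostsure} obtained from the martingale structure via a maximal Burkholder bound (your Borel--Cantelli patch is a reasonable alternative for that last step, but it presupposes the quantitative tail control you have not established). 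Without the damped-propagator reformulation and the iterative decay upgrade, your scheme proves the theorem only for $\gamma$ above a positive threshold, not for all $\gamma>0$.
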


The global existence of such solutions in Theorem \ref{thm: main} already follows from \cite{fan2018global}, \cite{DZhang}.\footnote{\cite{fan2018global} was written for dimension 1, but one can easily generalize it to high dimensions.}  In particular, we know for all\footnote{Though \eqref{eq: start} is a local bound, but unlike usual local theory, no matter how small is $T$, very nonlinear dynamic may still happen (with small probability though), and the theory is not purely perturbative.}  $T$, one has for all $\rho<\infty$
\begin{equation}\label{eq: start}
\|u\|_{L_{\omega}^{\rho}L_{t}^{\alpha_{0}}L_{x}^{\beta_{0}}[0,T]}\lesssim_{\rho, T, \|u(0)\|_{L_{\omega}^{\infty}L_{x}^{2}}} 1.
\end{equation}
And mass conservation law gives an extra $L_{\omega}^{\infty}L_{t}^{\infty}L_{x}^{2}$ bound. It allows one to get a global flow. It allows one to get a continuous range of  Strichartz type bound.
See also \cite{dBD03} on  the global existence for $H^{1}$ initial data.  
To the best of our knowledge, even for $\gamma=\frac{1}{2}$ and $u_{0}$ some Schwarz function, our result regarding \eqref{eq: snls} is new.

 Unlike the deterministic case, we do not see that global Strichartz spacetime bound \eqref{eq: spacetimebound3d} directly implies the scattering asymptotic. And extra efforts are needed to upgrade \eqref{eq: spacetimebound3d} into \eqref{eq: scattering3d}.

One can use the language of $\gamma-$randonifying operator to generalize the Theorem \ref{thm: main} to more general noise with essentially same proof\footnote{Roughly speaking, our main result Theorem \ref{thm: main} hold for $V$ such that $V$ are both decay in space and frequency in some sense. In particular all Schwarz functions satisfy this property. Let us say this can be characterized by some norm $\|\cdot \|_{A}$, then automatically the result will hold for noise $\sum_{k}V_{k}B_{k}$ where $\sum_{k}\|V_{k}\|_{A}<\infty$ and $B_{k}$ are i.i.d Brownian Motions, and same extra time decay $\langle t \rangle^{-\alpha}$ is imposed. The language of $\gamma-$randonifying operator allows one to relax the $l^{1}$ summability to $l^{2}$ summability by exploring the property of Gaussian, this is by now standard, one may refer to \cite{dBD99} for more details.}.

 The analogue of Theorem \ref{thm: main} holds for the focusing case if one further impose $\|u_0\|_{L_{\omega}^{\infty}L^2_x}<\|Q\|_{L^2_x}$, where $Q$ is the unique radial solution to 
\begin{equation}
-\Delta Q+Q=|Q|^{\frac{4}{3}}Q.
\end{equation}
In the literature, $Q$ is usually called ground state. The proof is essentially same except one needs to recall the analogues of Theorem \ref{thm: dod} is established in \cite{dodson2015global}, see also \cite{weinstein1983nonlinear}.

We also expect same technique gives same result in for mass critical models in dimension $d\geq 2$. However, if one want to further explore the problem for $\gamma=0,$ the most favorable dimension\footnote{Conceptually, dimension 1 and 2 are harder, but extra technical problems also rise in high dimensions. } seems to be $3$. 

\subsection{Background}
We start with the deterministic (defocusing) mass-critical nonlinear Schr\"odinger equation.
\begin{equation}\label{eq: mnls}
\begin{cases}
iu_{t}+\Delta u=|u|^{\frac{4}{d}}u,\\
u(0)=u_{0}.
\end{cases}
\end{equation}
 The local well-posedness theory for all $L_{x}^{2}$ initial data is well established and now classical, see for example \cite{cazenave2003semilinear,cazenave1990cauchy,tao2006nonlinear}.  The global well-posedness for general $L_{x}^{2}$ data is highly nontrivial, but finally solved by by Dodson \cite{dodson2012global,Dodson1,Dodson2}. In particular, he proved
 \begin{thm}\label{thm: dod}
 For all $L_{x}^{2}$ initial data, the solution $u$ to \eqref{eq: mnls} is global and one has the following spacetime bound
 \begin{equation}\label{eq: sca}
 \|u\|_{L_{t}^{q}L_{x}^{r}(\mathbb{R}\times \mathbb{R}^d)}<\infty,
 \end{equation}
 where $(q,r)$ is some admissible Strichartz pair. In particular, \eqref{eq: sca} implies there exists some $u^{+}\in L_{x}^{2}$, so that 
 \begin{equation}
\| u(t)-e^{it\Delta}u^{+}\|_{L^2_x(\mathbb{R}^d)} \xrightarrow{t\rightarrow \infty} 0.
 \end{equation}
 \end{thm}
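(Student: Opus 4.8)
The plan is to prove \eqref{eq: sca} by the concentration--compactness / rigidity method of Kenig--Merle, specialized to the mass-critical scaling, and then to upgrade the global spacetime bound to scattering by a standard Duhamel argument. Suppose, toward a contradiction, that \eqref{eq: sca} fails. Combining the mass-critical local well-posedness and stability theory with the linear profile decomposition in $L^2_x$ (B\'egout--Vargas, Carles--Keraani, Merle--Vega), one extracts a \emph{minimal-mass critical element}: a nonzero global solution $u$ of \eqref{eq: mnls} with $\|u_0\|_{L^2_x}^2 = m_c>0$ the critical mass, whose orbit is almost periodic modulo the symmetry group of the equation. Concretely, there are $N(t)>0$, $x(t)\in\mathbb{R}^d$, $\xi(t)\in\mathbb{R}^d$ such that $\{N(t)^{-d/2}e^{ix\cdot\xi(t)}u(t,x(t)+N(t)^{-1}x):t\ge 0\}$ is precompact in $L^2_x$; equivalently, for each $\eta>0$ there is $C(\eta)<\infty$ with
\begin{equation*}
\int_{|x-x(t)|\ge C(\eta)/N(t)}|u(t,x)|^2\,dx+\int_{|\xi-\xi(t)|\ge C(\eta)N(t)}|\widehat u(t,\xi)|^2\,d\xi\le\eta,\qquad \forall\, t\ge 0.
\end{equation*}

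Next I would normalize this critical element. A Galilean change of variables, together with local-constancy estimates for $N(t)$ and $\xi(t)$ on intervals of length $\sim N(t)^{-2}$, reduces matters to $\xi(t)\equiv 0$. One then distinguishes two scenarios according to the behavior of the frequency scale $N(t)$ on $[0,\infty)$: the \emph{soliton-like} case (after a further reduction, $N(t)\equiv 1$), and the \emph{frequency-cascade} case, in which $N(t)\to 0$ along a sequence $t_n\to\infty$ relative to a suitable normalization. Throughout, the natural measure of the size of $u$ on a time interval $I$ is the quantity $K=K(I):=\int_I N(t)^{d}\,dt$ (which is $\int_I N(t)^3\,dt$ when $d=3$), and the goal is to control $u$ on $I$ effectively in terms of $K$ and then to contradict $m_c>0$.

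The technical core --- and the main obstacle --- is a pair of a priori estimates for the critical element. The first is a \emph{long-time Strichartz estimate}: on any interval $I$ with $K(I)=K$ and for dyadic $M\ge C(\eta)$ (more generally $M\ge C(\eta)\|N\|_{L^\infty(I)}$), the high-frequency part satisfies a frequency-localized Strichartz bound of the shape $\|P_{\ge M}u\|_{\widetilde S(I)}\lesssim 1+(K/M)^{1/2}$, proved by inducting on the Littlewood--Paley scale and controlling nonlinear interactions via bilinear Strichartz estimates together with the precompactness of the orbit. The second is a \emph{frequency-localized interaction Morawetz inequality}: since $P_{\le M}u$ has $\dot H^{1/2}_x$-norm $\lesssim M^{1/2}\|u_0\|_{L^2_x}$, the interaction Morawetz estimate of Colliander--Keel--Staffilani--Takaoka--Tao type yields, in $d=3$, the bound $\int_I\int_{\mathbb{R}^3}|P_{\le M}u|^{4}\,dx\,dt\lesssim M$ (the general-$d$ version involves a fractional derivative of $|u|^2$), where the error incurred by replacing $u$ with $P_{\le M}u$ is absorbed using the long-time Strichartz estimate. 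On the other hand, almost periodicity forces the lower bound $\int_I\int|u|^4\gtrsim m_c^2\int_I N(t)^3\,dt=m_c^2K$. Matching the two and optimizing the frequency cutoff (roughly $M\sim\delta K$ with $\delta=\delta(m_c)$ small) gives $K(I)\lesssim_{m_c}1$ uniformly over subintervals $I$, hence $\int_0^\infty N(t)^d\,dt<\infty$. This immediately contradicts the soliton-like case ($N(t)\equiv 1$ would force $\int_0^\infty N(t)^d\,dt=\infty$). In the frequency-cascade case, the bound $\int_0^\infty N(t)^d\,dt<\infty$ combines with the additional regularity of the critical element --- itself extracted from the long-time Strichartz estimate (or a double Duhamel argument) --- so that the energy $E(u(t))$ is finite and conserved; evaluating it along $t_n$ with $N(t_n)\to 0$ gives $E(u)=0$, and in the defocusing case the energy is a sum of nonnegative terms, whence $u\equiv 0$. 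Either way we contradict $m_c>0$, so no critical element exists and \eqref{eq: sca} holds. (In $d=1,2$ the same architecture applies, but the bilinear input and the Morawetz-type functional are more delicate, which is why these cases are treated separately; the focusing analogue mentioned later in the paper instead uses the sharp Gagliardo--Nirenberg inequality to guarantee positivity of the energy.)

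Finally, to deduce scattering from \eqref{eq: sca}: the finiteness of $\|u\|_{L^q_tL^r_x}$ together with the Strichartz inequality shows that $u$ lies in every Strichartz space on $[0,\infty)$, so the spacetime norm of $|u|^{4/d}u$ is finite there. From the Duhamel formula $e^{-it\Delta}u(t)=u_0-i\int_0^t e^{-is\Delta}\bigl(|u|^{4/d}u\bigr)(s)\,ds$ and the dual Strichartz estimate, the tails $\big\|\int_{t_1}^{t_2}e^{-is\Delta}(|u|^{4/d}u)(s)\,ds\big\|_{L^2_x}$ tend to $0$ as $t_1,t_2\to\infty$; hence $u^+:=u_0-i\int_0^\infty e^{-is\Delta}(|u|^{4/d}u)(s)\,ds$ exists in $L^2_x$ and $\|u(t)-e^{it\Delta}u^+\|_{L^2_x}\to 0$. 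This last step is entirely routine once \eqref{eq: sca} is in hand.
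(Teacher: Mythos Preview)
The paper does not give its own proof of this theorem: it is quoted as a known result of Dodson \cite{dodson2012global,Dodson1,Dodson2} in the background section, and is subsequently used as a black box (notably in Lemma~\ref{lem: gwpdnls}). So there is no ``paper's own proof'' to compare against here.

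That said, your sketch is a faithful high-level outline of the actual Dodson argument: the Kenig--Merle concentration-compactness reduction to an almost periodic minimal-mass critical element, the long-time Strichartz estimates, the frequency-localized interaction Morawetz inequality, and the dichotomy between the soliton-like and frequency-cascade scenarios. The scattering paragraph at the end is exactly the standard Duhamel-tail argument. As a roadmap this is correct; of course, each of the two core estimates (long-time Strichartz and the Morawetz-type bound, together with the control of the projection errors) is itself a substantial piece of work, and the details differ nontrivially across dimensions $d=1,2,\ge 3$, as you note. For the purposes of the present paper, though, none of this machinery needs to be reproduced --- only the conclusion \eqref{eq: sca} is used.
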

 
 It should be noted, if one assumes the initial data is Schwarz, (or smooth and localized in certain sense), one can also establish the scattering behavior by pseudo-conformal symmetry. This is very different from energy critical model.
 
The study of well-posedness for stochastic nonlinear Schr\"odinger equation has been initiated in \cite{dBD99}. Many refinements are made, well-posedness of subcritical nature to critical nature has been explored, see \cite{BRZ14,BRZ16,dBD03,fan2018global,fan2019wong,Hor,DZhang} and the reference therein.  In short summary, one can say given an a-priori control of the critical quantity for the stochastic model, then the deterministic scattering result implies a non-blow up result for the stochastic model.

However, though many results do cover the global existence, long time dynamic are far less understood. In \cite{FX}, decay estimate was established for a linear model with small noise. In \cite{HRZ}, a scattering result was established by assuming the noise is of finite quadratic variation in time. As a comparison, for our main result Theorem \ref{thm: main}, the noise is far from be of finite quadratic variation\footnote{We treat rougher data  ($L_{x}^{2}$ regularity) compared to \cite{HRZ}, but this is not the point of current article, the focus here is about the time decay of noise.}, but since the noise is decaying, though it is not small for $t\lesssim 1$, it is also not really a large noise result. We think it will of great interest if one can recover the result of Theorem \ref{thm: main} for small but non-decaying noise.

We also point out, if one is looking at global spacetime bound which is of form $L_{\omega}^{\rho}L_{t}^{q}L_{x}^{r}$, or scattering in $L_{\omega}^{\rho}$, the mass critical model seems to be very specific. The current article relies on a pathwise conservation law, which is not flexible enough to be generalized to other model.  And in particular, those stochastic models in general does not preserve the energy. However, one can retreat to try to prove a.s. $L_{t}^{q}L_{x}^{r}$ and a.s scatter to a linear solution, or similar convergence in probability. Those type of results will be also very interesting.

We finally remark that the previous discussion (and the current article) are mainly for stochastic problems with a multiplicative noise. Stochastic NLS with an additive noise are also of great interest, see for example \cite{dBD03}, \cite{Oh} and the reference therein. Usually one should be able to study much rougher noise in the additive noise compared to the multiplicative noise.  We want to mention the recent important work \cite{YDeng1}, \cite{YDeng2}, which gives very powerful tool to establish well-posedness result if one wants to study very rough noise. See more reference and discussions in \cite{YDeng1}, \cite{YDeng2}. 

\subsection{Notations}
we write $A \lesssim B$ to say that there is a constant $C$ such that $A\leq CB$. We use $A \simeq B$ when $A \lesssim B \lesssim A $. Particularly, we write $A \lesssim_u B$ to express that $A\leq C(u)B$ for some constant $C(u)$ depending on $u$. Without special clarification, the implicit constant $C$  can vary from line to line. In addition, we denote $a\pm:=a\pm \epsilon$ with $0<\epsilon \ll 1$. Moreover, we use Japanese bracket $\langle x\rangle$ to denote $(1+|x|^2)^{\f{1}{2}}$ and $p^{'}$ for the dual index of $p>1$ in the sense of $\frac{1}{p^{'}}+\frac{1}{p}=1$. 

We sometimes will short $L_{\omega}^{a}L_{t}^{b}L_{x}^{c}(\Omega\times [t_{1},t_{2}]\times \mathbb{R}^{3})$ as $L_{\omega}^{a}L_{t}^{b}L_{x}^{c}([t_{1},t_{2}]\times \mathbb{R}^{3})$ or simply $L_{\omega}^{a}L_{t}^{b}L_{x}^{c}([t_{1},t_{2}])$. We abuse notion a bit and don't distinguish between $[t_{1},\infty]$ and $[t_{1},\infty)$. We will also short $L_{\omega}^{a}L_{t}^{b}L_{x}^{c}(\Omega\times [0,\infty]\times \mathbb{R}^{3})$ simply as $L_{\omega}^{a}L_{t}^{b}L_{x}^{c}$.
\subsection{Acknowledgment}
C.F. was partially supported by a Simons Travel grant and a start up grant from AMSS.
\section{Preliminary}
We start with a simple version of Burkholder inequality which is most relevant to the current article. 
\begin{lemma}
Let $B_{t}$ be the usual Brownian motion. Let $2\leq p<\infty$, and $\sigma$ be a continuous adapted process (to $B_{t}$) in $L_{x}^{p}$, then one has 
\begin{equation}
	\big\| \sup_{0\leq a\leq b\leq T}\|\int_{a}^{b}\sigma(s)dB_{s}\|_{L^p_x} \big\|_{L_{\omega}^{\rho}}\lesssim_{\rho, p}\big\|\int_{0}^{T}\|\sigma(s)\|^{2}_{L_{x}^{p}}ds\big\|_{L_{\omega}^{\frac{\rho}{2}}}^{\frac{1}{2}}.
\end{equation}
\end{lemma}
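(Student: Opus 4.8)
\emph{Proof sketch.} Write $M_t:=\int_0^t\sigma(s)\,dB_s$ for the continuous $L^p_x$-valued martingale in question, and set $\Sigma_T:=\int_0^T\|\sigma(s)\|_{L^p_x}^2\,ds$. The plan is first to strip off the two suprema. Since $\int_a^b\sigma\,dB_s=M_b-M_a$, the triangle inequality in $L^p_x$ gives, pathwise, $\sup_{0\le a\le b\le T}\|\int_a^b\sigma\,dB_s\|_{L^p_x}\le 2\sup_{0\le b\le T}\|M_b\|_{L^p_x}$, so only a one-endpoint running supremum remains. Because $v\mapsto\|v\|_{L^p_x}$ is convex, $t\mapsto\|M_t\|_{L^p_x}$ is a nonnegative continuous submartingale (conditional Jensen), whence Doob's $L^\rho$ maximal inequality (valid for $\rho>1$, which we assume) yields
\[
\big\|\sup_{0\le b\le T}\|M_b\|_{L^p_x}\big\|_{L^\rho_\omega}\lesssim_\rho\big\|\,\|M_T\|_{L^p_x}\,\big\|_{L^\rho_\omega}.
\]
It therefore suffices to establish the endpoint moment bound $\mathbb{E}\|M_T\|_{L^p_x}^\rho\lesssim_{p,\rho}\mathbb{E}\,\Sigma_T^{\rho/2}$.

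When $\rho=p$ this is straightforward: by Fubini, $\mathbb{E}\|M_T\|_{L^p_x}^p=\int_{\mathbb{R}^3}\mathbb{E}\big|\int_0^T\sigma(s,x)\,dB_s\big|^p\,dx$; the classical scalar Burkholder inequality bounds the inner expectation by $C_p\,\mathbb{E}\big(\int_0^T|\sigma(s,x)|^2\,ds\big)^{p/2}$; and Minkowski's integral inequality in $x$, legitimate since $p/2\ge1$, gives $\big\|\int_0^T|\sigma(s,\cdot)|^2\,ds\big\|_{L^{p/2}_x}\le\Sigma_T$ pathwise, so that $\mathbb{E}\|M_T\|_{L^p_x}^p\lesssim_p\mathbb{E}\,\Sigma_T^{p/2}$. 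For a general exponent $\rho$, the bound $\mathbb{E}\|M_T\|_{L^p_x}^\rho\lesssim_{p,\rho}\mathbb{E}\,\Sigma_T^{\rho/2}$ is exactly the vector-valued Burkholder inequality for the $2$-smooth (martingale type $2$) Banach space $L^p_x$ with $p\ge2$, which is by now standard and may simply be cited. If one prefers a self-contained argument, I would apply It\^o's formula to $\|M_t\|_{L^p_x}^2$: the $2$-smoothness of $L^p_x$ furnishes a bound of order $p$ on the second Fr\'echet derivative of $v\mapsto\|v\|_{L^p_x}^2$, so the It\^o drift is controlled by $(p-1)\|\sigma(t)\|_{L^p_x}^2\,dt$ and the martingale part has quadratic variation $\lesssim\int_0^t\|M_s\|_{L^p_x}^2\|\sigma(s)\|_{L^p_x}^2\,ds\le\big(\sup_{s\le t}\|M_s\|_{L^p_x}^2\big)\Sigma_t$; scalar Burkholder applied to the martingale part, Cauchy--Schwarz, Doob's inequality once more, and a Young-type absorption (after a routine localization ensuring all quantities are finite) then close the estimate.

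The two reductions in the first paragraph are soft, and the diagonal case $\rho=p$ is elementary; the one genuinely nontrivial input is the off-diagonal moment comparison $\mathbb{E}\|M_T\|_{L^p_x}^\rho\lesssim_{p,\rho}\mathbb{E}\,\Sigma_T^{\rho/2}$ for $\rho\ne p$. ``Minkowski plus scalar Burkholder'' alone produces only the diagonal exponent, and one is forced to use the Banach-space geometry of $L^p_x$ for $p\ge2$ (its $2$-smoothness) to bridge the gap; this is where I expect the real work to lie. Since in the applications the lemma is invoked with $\rho$ large, it is also perfectly acceptable to state it in that regime and quote the standard vector-valued Burkholder inequality, which is the route I would ultimately take.
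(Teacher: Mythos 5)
Your proposal is correct. Note, however, that the paper does not actually prove this lemma: it states it and refers to the literature (Burkholder, Burkholder--Davis--Gundy, Brze\'zniak--Peszat) for general vector-valued versions involving $\gamma$-radonifying operators, which is exactly the ``standard vector-valued Burkholder inequality'' you end up quoting. So your route and the paper's are essentially the same at the level of the key input: the moment bound $\mathbb{E}\|M_T\|_{L^p_x}^{\rho}\lesssim_{p,\rho}\mathbb{E}\,\Sigma_T^{\rho/2}$ in the $2$-smooth space $L^p_x$, $p\ge 2$. What you add on top is correct but partly redundant: the standard statements of BDG in $2$-smooth (martingale type $2$) spaces already control $\sup_{t\le T}\|M_t\|_{L^p_x}$, for every moment $\rho>0$, so the triangle-inequality reduction of the double supremum to $2\sup_{b\le T}\|M_b\|$ is all you need, and the Doob step (with its $\rho>1$ restriction) can be skipped; in any case that restriction is harmless here since the paper only uses the lemma with $\rho\ge 2$. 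Your diagonal case $\rho=p$ via Fubini, scalar Burkholder and Minkowski is correct, and your It\^o-formula sketch for $\|M_t\|_{L^p_x}^2$ (second derivative bounded using $2$-smoothness, scalar BDG on the martingale part, Cauchy--Schwarz and Young absorption after localization) is precisely the standard proof of the cited vector-valued inequality, so the argument is sound; the only points to be careful about, were you to write it out, are the routine localization guaranteeing a priori finiteness and the $C^2$-regularity of $v\mapsto\|v\|_{L^p_x}^2$ for $2\le p<3$, both standard.
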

\begin{remark}
One cannot take $\rho$ be $\infty$.
\end{remark}
One may refer to \cite{B,BP,Bur73,BDG73} for more general versions involving $\gamma$-Randonifying operators, we don't pursue the details here.\vspace{3mm}

Then we state the standard decay estimate (also known as dispersive estimate) and Strichartz estimate for Schr\"odinger operator. We refer to \cite{tao2006nonlinear} for details.
\begin{lemma}[Dispersive estimate]
For the linear propagator $e^{it\Delta}$ of Schr\"odinger equation in $\R^{d}$, one has 
\begin{equation}\label{eq: dispersive}
\|e^{it\Delta}u_{0}\|_{L_{x}^{\infty}}\lesssim t^{-\frac{d}{2}}\|u_{0}\|_{L_{x}^{1}}.
\end{equation}
Moreover, by interpolation with the mass conservation, for $p\geq 2$, we have
\begin{equation}\label{eq: dispersive2}
\|e^{it\Delta}u_{0}\|_{L_{x}^{p}}\lesssim t^{-d(\frac{1}{2}-\frac{1}{p})}\|u_{0}\|_{L_{x}^{p}}.
\end{equation}
\end{lemma}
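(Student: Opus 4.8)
The estimate \eqref{eq: dispersive} is classical and I would prove it from the explicit form of the Schr\"odinger kernel. On the Fourier side $e^{it\Delta}$ acts by multiplication by $e^{-it|\xi|^{2}}$ (with the sign convention of \eqref{eq: mnls}), so $e^{it\Delta}u_{0}=K_{t}\ast u_{0}$, where $K_{t}$ is the inverse Fourier transform of $\xi\mapsto e^{-it|\xi|^{2}}$. The plan is to compute $K_{t}$ by Gaussian regularization: for $\mathrm{Re}\,z>0$ one has the absolutely convergent identity $\int_{\R^{d}}e^{-z|\xi|^{2}+ix\cdot\xi}\,d\xi=(\pi/z)^{d/2}e^{-|x|^{2}/(4z)}$, and letting $z\to it$ along $\mathrm{Re}\,z\downarrow 0$ (pairing against a Schwartz test function and using dominated convergence, then density) yields $K_{t}(x)=c_{d}\,t^{-d/2}e^{i|x|^{2}/(4t)}$ for a dimensional constant $c_{d}$ (depending on the Fourier normalization). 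In particular $\|K_{t}\|_{L_{x}^{\infty}}\lesssim t^{-d/2}$, and Young's convolution inequality gives $\|e^{it\Delta}u_{0}\|_{L_{x}^{\infty}}=\|K_{t}\ast u_{0}\|_{L_{x}^{\infty}}\leq\|K_{t}\|_{L_{x}^{\infty}}\|u_{0}\|_{L_{x}^{1}}\lesssim t^{-d/2}\|u_{0}\|_{L_{x}^{1}}$, which is \eqref{eq: dispersive}.

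For \eqref{eq: dispersive2} I would interpolate \eqref{eq: dispersive} against mass conservation. Since $e^{it\Delta}$ is multiplication by a unimodular function on the Fourier side, Plancherel gives $\|e^{it\Delta}u_{0}\|_{L_{x}^{2}}=\|u_{0}\|_{L_{x}^{2}}$, i.e. $e^{it\Delta}:L_{x}^{2}\to L_{x}^{2}$ has operator norm $1$; combined with $\|e^{it\Delta}\|_{L_{x}^{1}\to L_{x}^{\infty}}\lesssim t^{-d/2}$, the Riesz--Thorin theorem with parameter $\theta=2/p\in(0,1]$ yields $\|e^{it\Delta}\|_{L_{x}^{p'}\to L_{x}^{p}}\lesssim (t^{-d/2})^{1-\theta}=t^{-d(\frac12-\frac1p)}$ for every $p\geq 2$, where $p'$ is the conjugate exponent; this is the content of \eqref{eq: dispersive2} (the natural reading of the right-hand side being $\|u_{0}\|_{L_{x}^{p'}}$, since for $p\neq2$ the propagator is not bounded on $L_{x}^{p}$). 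The exponent bookkeeping is routine: $\tfrac1{p'}=\tfrac{1-\theta}{1}+\tfrac{\theta}{2}$ and $\tfrac1{p}=\tfrac{1-\theta}{\infty}+\tfrac{\theta}{2}$ give $\tfrac1p+\tfrac1{p'}=1$ and $\tfrac{d}{2}(1-\theta)=d(\tfrac12-\tfrac1p)$.

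There is no genuine obstacle here; the only point needing a little care is the justification of the closed form of $K_{t}$, since the defining oscillatory integral $\int_{\R^{d}}e^{-it|\xi|^{2}+ix\cdot\xi}\,d\xi$ is only conditionally convergent. This is exactly what the $\mathrm{Re}\,z\downarrow0$ limit above handles — equivalently, $K_{t}$ is the analytic continuation of the heat kernel $(4\pi t)^{-d/2}e^{-|x|^{2}/(4t)}$ from $t>0$ to the imaginary axis, or one may run a stationary-phase computation — and everything after that is Young's convolution inequality and Riesz--Thorin interpolation.
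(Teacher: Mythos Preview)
Your proof is correct and follows the standard route (explicit kernel via Gaussian regularization, Young's inequality, then Riesz--Thorin against the $L^{2}$ isometry). The paper itself does not give a proof of this lemma at all---it simply refers the reader to \cite{tao2006nonlinear}---so there is nothing to compare beyond noting that your argument is exactly the textbook one being cited. You also correctly flag that the right-hand side of \eqref{eq: dispersive2} should read $\|u_{0}\|_{L_{x}^{p'}}$ rather than $\|u_{0}\|_{L_{x}^{p}}$; this is a typo in the paper's statement, and the $L^{p'}\to L^{p}$ bound you derive is indeed what is used later (e.g.\ in \eqref{eq: esdisdls} and the Burkholder estimates in Section~\ref{sec: boot}).
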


\begin{lemma}[Strichartz estimate] \label{Euclidean}
Suppose $\frac{2}{q}+\frac{d}{p}=\frac{d}{2}$, where $p,q \geq 2$ and $(q,p,d)\neq (2,\infty,2)$. We call such pair $(p,q)$ a Strichartz pair and denote $\mathcal{A}_d$ to be the set of all Strichartz pairs. Then 
\begin{equation}
    \|e^{it\Delta_{\mathbb{R}^d}}f\|_{L^q_tL^p_x(\mathbb{R}\times \mathbb{R}^d)} \lesssim \|f\|_{L^2(\mathbb{R}^d)}.
\end{equation}
Also, for Strichartz pairs $(p_1,q_1)$ and $(p_2,q_2)$,
\begin{equation}
    \big\|\int_0^t e^{i(t-s)\Delta}F(s)ds\big\|_{L^{q_1}_tL^{p_1}_x(\mathbb{R}\times \mathbb{R}^d)} \lesssim \|F(s)\|_{L^{q_{2}^{'}}_{t} L^{p_{2}^{'}}_x(\mathbb{R}\times \mathbb{R}^d)}.
\end{equation}
\end{lemma}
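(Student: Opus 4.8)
The plan is to run the classical $TT^{\ast}$ together with fractional integration argument (Strichartz, Ginibre--Velo, Keel--Tao).

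\textbf{Step 1 (pointwise dispersive bound).} Mass conservation makes $e^{it\Delta}$ a unitary operator on $L^2_x$, while \eqref{eq: dispersive2} gives $\|e^{it\Delta}f\|_{L^p_x}\lesssim |t|^{-d(\frac12-\frac1p)}\|f\|_{L^{p'}_x}$ for $p\ge 2$. Combined with the group law $e^{i(t-s)\Delta}=e^{it\Delta}e^{-is\Delta}$, this yields, for any Strichartz pair $(p,q)$, the kernel-type bound $\|e^{i(t-s)\Delta}F(s)\|_{L^p_x}\lesssim |t-s|^{-d(\frac12-\frac1p)}\|F(s)\|_{L^{p'}_x}$.

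\textbf{Step 2 ($TT^{\ast}$ and Hardy--Littlewood--Sobolev).} Set $Tf:=e^{it\Delta}f$, viewed as a map $L^2_x\to L^q_tL^p_x$. Its formal adjoint is $T^{\ast}G=\int_{\R}e^{-is\Delta}G(s)\,ds$, so $TT^{\ast}G=\int_{\R}e^{i(t-s)\Delta}G(s)\,ds$, and the duality identity $\|T\|_{L^2_x\to L^q_tL^p_x}^2=\|TT^{\ast}\|_{L^{q'}_tL^{p'}_x\to L^q_tL^p_x}$ reduces matters to bounding $TT^{\ast}$. By Minkowski's inequality and Step 1, $\|TT^{\ast}G(t)\|_{L^p_x}\lesssim \int_{\R}|t-s|^{-d(\frac12-\frac1p)}\|G(s)\|_{L^{p'}_x}\,ds$, and the admissibility relation $\frac2q+\frac dp=\frac d2$ says precisely that $d(\frac12-\frac1p)=1-\frac2q$, i.e. the right side is a one-dimensional Riesz potential of order $\frac2q$. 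The Hardy--Littlewood--Sobolev inequality then gives $\|TT^{\ast}G\|_{L^q_tL^p_x}\lesssim \|G\|_{L^{q'}_tL^{p'}_x}$ as soon as $2<q<\infty$ (the borderline case $q=\infty$, $p=2$ being just mass conservation). This proves the homogeneous estimate off the endpoint; the inhomogeneous estimate with $(p_1,q_1)=(p_2,q_2)$ follows by factoring the retarded operator through $T$ and $T^{\ast}$, and the mixed-pair case from the same $TT^{\ast}$-type bound together with interpolation between admissible pairs.

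\textbf{Step 3 (retardation).} To pass from $\int_{\R}$ to $\int_0^t$ in the inhomogeneous estimate I would invoke the Christ--Kiselev lemma: boundedness of the full convolution operator $L^{q_2'}_tL^{p_2'}_x\to L^{q_1}_tL^{p_1}_x$ upgrades to the causal truncation $F\mapsto\int_0^t e^{i(t-s)\Delta}F(s)\,ds$ provided $q_2'<q_1$, which holds for all admissible pairs except the double-endpoint case $q_1=q_2=2$.

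The main obstacle is the endpoint pair, which for $d\ge 3$ is $(p,q)=(\tfrac{2d}{d-2},2)$, and which the present lemma does include (only $(q,p,d)=(2,\infty,2)$ is excluded). There $d(\frac12-\frac1p)=1$, the Riesz potential degenerates to order $0$, and HLS fails on $L^1_t$; recovering the $L^2_t$ bound requires the Keel--Tao bilinear argument --- a dyadic decomposition of $|t-s|$, bilinear interpolation on each dyadic annulus between the $L^2_x\times L^2_x$ and $L^1_x\times L^\infty_x$ estimates, and summation of the resulting geometric series --- and the Christ--Kiselev trick is likewise unavailable, so the retarded endpoint estimate must be argued separately. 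Since the present paper works in $d=3$ and only ever uses admissible pairs with $\alpha_0>4$, hence with time exponent $q>2$, one may alternatively simply remark that only the non-endpoint range of Lemma \ref{Euclidean} is actually needed here, and cite \cite{tao2006nonlinear} for the endpoint.
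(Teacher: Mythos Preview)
The paper does not prove this lemma at all: it is stated as a standard preliminary and the reader is referred to \cite{tao2006nonlinear} for details. Your proposal is a correct outline of the classical $TT^{\ast}$/Hardy--Littlewood--Sobolev argument (with Christ--Kiselev for retardation and the Keel--Tao machinery for the endpoint), so it supplies more than the paper itself does; there is nothing to compare against beyond noting that your final remark---that only the non-endpoint range $q>2$ is actually used in the body of the paper---is exactly right.
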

At last, we recall the conventional notations for Strichartz norm $S(I)$ and dual Strichartz norm $N(I)$ ($I$ is a time interval) as follows. 
\begin{definition}
When $d\geq 3$, 
\begin{equation}
    \|u\|_{S(I)}=\sup_{(p,q)\in \mathcal{A}_d} \|u\|_{L^q_tL^p_x(I\times \mathbb{R}^d)}
\end{equation}
and
\begin{equation}
    \|u\|_{N(I)}=\inf_{(p,q)\in \mathcal{A}_d} \|u\|_{L^{q^{'}}_{t}L^{p^{'}}_{x}(I\times \mathbb{R}^d)}.
\end{equation}
\end{definition}
Then one can rewrite the Strichartz estimate in terms of Strichartz norm $S(I)$ and dual Strichartz norm $N(I)$.

\section{Overview of the proof of Theorem \ref{thm: main}}
\subsection{Quick review}\label{sub: qr}
To start, we briefly recall the crucial a-priori estimates in \cite{fan2018global}, which are used to prove the global existence with the help of mass conservation law. \footnote{\cite{fan2018global} was written for the 1d model, but can be easily generalized to high dimensions.}Let $u$ be a solution to \eqref{eq: snls} for with initial data $u_{0}\in L^{\infty}_{\omega}L_{x}^{2}$, one has for any $1 \leq \rho<\infty$, 
\begin{equation}\label{eq: ap1}
\|u\|_{L_{\omega}^{\rho}L_{t}^{\alpha}L_{x}^{\beta}([0,1]\times \mathbb{R}^{3})}\lesssim 1.
\end{equation}
Here $L_{t}^{\alpha}L_{x}^{\beta}$ is a Strichartz-type norm. Note that since \eqref{eq: ap1} is an estimate within unit time scale, the value of $\gamma$ does not matter at all.

We quickly review the proof here, for notation convenience, we take $\gamma=0$.
Write down the Duhamel Formula of \eqref{eq: snls} for $t$ in $[0,1]$ as 
\begin{equation}\label{eq: duhamel1}
u(t,x)=S(t)u_{0}-i\int_{0}^{t}S(t-s)N(u(s))ds-i\int_{0}^{t}S(t-s)(V(x)u(s))dB_{s}-\frac{1}{2}\int_{0}^{t}S(t-s)(V^{2}u(s))ds,
\end{equation}
where $S(t)$ is the free Schr\"odinger propagator. And note for any $t\in [a,b]\subset [0,1]$, one has 
\begin{equation}\label{eq: duhamel22}
\begin{aligned}
u(t,x)=&S(t-a)u(a)-i\int_{a}^{t}S(t-s)N(u(s))ds-\\
&i\int_{a}^{t}S(t-s)(V(x)u(s))dB_{s}-\frac{1}{2}\int_{0}^{t}S(t-s)(V^{2}u(s))ds.
\end{aligned}
\end{equation}
Schematically, we will view the term $-i\int_{0}^{t}S(t-s)(V(x)u(s))dB_{s}-\frac{1}{2}\int_{0}^{t}S(t-s)(V^{2}u(s))ds$ as a perturbation to the deterministic NLS.  More precisely, we will consider a maximal version of such a term by letting 
\begin{equation}\label{eq: mstar }
M^{*}(t):=\sup_{0\leq a\leq b\leq t}\big\|i\int_{a}^{b}S(t-s)(Vu(s))dB_{s}+\frac{1}{2}\int_{a}^{b}S(t-s)(V^{2}u(s))ds\big\|_{L_{x}^{\beta}}.
\end{equation}

For concreteness and simplicity, one can take, for example, $(\alpha,\beta):=(\frac{14}{3}, \frac{14}{5})$ or any  $(4+,3-)$ which are admissible Strichartz pairs. We remark, $(\frac{14}{3}, \frac{14}{5})$ and $(4,3)$ are two endpoints for our technique, and the latter cannot be attained.\footnote{That being said, since one has an extra $L_\omega^{\infty}L_{t}^{\infty}L_{x}^{2}$ by pathwise mass conservation law, the endpoint $\frac{14}{3}$ can be removed via interpolation, and one cover all $(\alpha,\beta)$ admissible pair as far as $\alpha>4$.}

The proof of \eqref{eq: ap1} will be reduced to the following two estimates. We fix $\rho$ first.
\begin{itemize}
\item By a pathwise mass conservation law\footnote{Because of this, even to prove \eqref{eq: ap1} for a single $\rho$, we still need the initial data be in $L_{\omega}^{\infty}L_{x}^{2}$.}, we have a-priori estimate $\|u\|_{L_{x}^{2}}\lesssim 1$, and we can use a deterministic modified stability argument\footnote{In this article, we will do an improved version later, so we neglect the details for the moment.} based on NLS, see Proposition 2.6 in \cite{fan2019wong}. And one conclude in a pathwise sense,
\begin{equation}\label{eq: deterministics}
\|u\|_{L_{t}^{\alpha}L_{x}^{\beta}([a,b]\times \mathbb{R}^3)}\lesssim \|M^{*}(t)\|_{L_{t}^{\alpha}([a,b])}+1.
\end{equation}
And this step one necessarily need $\alpha \leq \frac{14}{3}$.
\item By a-priori control $\|u\|_{{L_{\omega}^{\rho}L_{t}^{\infty}}L_{x}^{2}}\lesssim 1$ and Burkholder inequality, we will be able to conclude 
\begin{equation}\label{eq: slocal}
\|M^{*}(t)\|_{L_{\omega}^{\rho}L_{t}^{\frac{14}{3}}([0,1])}\lesssim \|u(t)\|_{L_{\omega}^{\rho}L_{t}^{\infty}L_{x}^{2}([0,1]\times \mathbb{R}^3)}\lesssim \|u_{0}\|_{L^{\infty}_{\omega}L_{x}^{2}}.
\end{equation}
This step requires $\alpha> 4$ (which is equivalent  to $\beta<3$). To see why  the number $3$ is special in $\mathbb{R}^{3}$. Recall one has $\|e^{it\Delta}\|_{L_{x}^{\frac{3}{2}}\rightarrow L_{x}^{3}}\leq t^{-\frac{1}{2}}$ and $\frac{1}{t}$ is not integrable at $0$.
\end{itemize}
\subsection{Warm up}
Now we are ready to go back the proof of Theorem \ref{thm: main}.

Recall in the scheme discussed in Subsection \eqref{sub: qr}, we are not exploring any (potential) decay property of $u$, since we are basing on mass \textbf{conservation} law.
Of course, since our noise decays at a rate $\langle t \rangle^{-\gamma}$, one is indeed working on $\langle t \rangle^{-\gamma}u$ rather than $u$.  
We first remark, the above scheme\footnote{A slight modification of Proposition 2.6 in \cite{fan2019wong} will still be needed, besides natural generalization to $3$. More precisely, one needs to observe, when $\gamma$ is not very small, $\langle t \rangle^{-2\gamma}V^{2}u$ will lies in $L_{t}^{2}L_{x}^{\frac{6}{5}}$. Because$V$ is localized and $\|u\|_{L_{x}^{2}}$ is conserved.} is enough to handle $\gamma>\gamma_{0}$ for some $\gamma_{0}<\frac{1}{2}$. And in that case, one can indeed get upgrade space time bound and scattering in Theorem \ref{thm: main} to all $L_{\omega}^{\rho}, \rho<\infty$.  We leave it to interested readers.
This in particular covers,  for example, $\gamma=\frac{1}{2}$,  which is of infinite quadratic variation in time. It is because our previous scheme already explore some dispersive property which compensates the fact $\int \langle t \rangle^{-2*(\frac{1}{2})}=\infty$.

However, to handle all $\gamma>0$, one has to improve the above scheme in several different ways. And in particular, extra efforts need to be paid to upgrade the space time bound to scattering dynamic.

In the rest of the article, we fix $\gamma=\epsilon_{0}>0$, (We only consider $\epsilon_{0}$ small. Since the smaller the $\epsilon_{0}$, the harder the problem).  We also fix $\|u_{0}\|_{L_{\omega}^{\infty}L_{x}^{2}}=M$.

 We will fix $\epsilon_{0}, M$ throughout and we don't track the reliance on those two parameters $\epsilon_{0}, M$ in the rest of the article.

And we make \eqref{eq: duhamel1} more precise
\begin{equation}\label{eq: duhamel2}
\begin{aligned}
u(t,x)=&S(t)u_{0}-i\int_{0}^{t}S(t-s)N(u(s))ds-i\int_{0}^{t}S(t-s)(V(x)\langle s \rangle^{-\epsilon_{0}}u(s))dB_{s}\\
&-\frac{1}{2}\int_{0}^{t}S(t-s)(\langle s \rangle^{-2\epsilon_{0}}V^{2}u(s))ds.
\end{aligned}
\end{equation}

\subsection{Step 1: Bootstrap and spacetime bound}
The idea is if one did prove $u$ to \eqref{eq: snls} satisfy the spacetime bound for some Strichartz pair $(\alpha,\beta)$, then $\|u\|_{L_{x}^\beta}$ decays as $\frac{1}{\alpha-}$ in the average sense. It is possible to explore this information via a bootstrap scheme\footnote{It is also referred as continuity argument.} since one has an extra time decay $\langle t \rangle^{-\epsilon_{0}}$. It is tempting to fulfill this idea using the pair $(2,6)$ or $L_{t}^{2}L_{x}^{6}$ norm, which is one endpoint of classical Strichartz estimate. However, for technical reason, $(4,3)$ will be an endpoint for us, and $(4,3)$ itself is forbidden in our analysis.

Now fix $(\alpha_{0},\beta_{0})$ an admissible Strichartz pair\footnote{In the first reading, it is suggested that the reader just formally take $(\alpha_{0}, \beta_{0})=(4,3)$ and neglect all log type divergence.}, and we choose $\beta_{0}$ close to $3$ enough so that
\begin{equation}\label{eq: close}
 3(\frac{1}{2}-\frac{1}{\beta})=\frac{1}{2}-\eta_{0},
\end{equation}
where $\eta_{0}\ll \epsilon_{0}$.
And in particular
\begin{equation}
\|e^{it\Delta}\|_{L^{\beta_{0}^{'}}_{x}\rightarrow L^{\beta_0}}\lesssim t^{-\frac{1}{2}+\eta_{0}}.
\end{equation}

And we will have
\begin{lemma}\label{lem: boot}
For all $T>s>0$, there exists some $C_{1,s}$, depending on $(\|u_{0}\|_{L^{\infty}_{\omega}L_{x}^{2}}, \alpha_{0}, \epsilon_{0})$ and $s$, and $C_{2}$ depending on $(\|u_{0}\|_{L^{\infty}_{\omega}L_{x}^{2}}, \alpha_{0}, \epsilon_{0})$, such that
\begin{equation}\label{eq: boot}
\begin{aligned}
&\|u\|_{L^{\alpha_{0}}_{\omega}L_{t}^{\alpha_{0}}L_{x}^{\beta_{0}}([0,T]\times \mathbb{R}^{3})}\\
\lesssim &C_{1,s}+\langle s \rangle^{-\frac{\epsilon_{0}}{2}}C_{2}\|u\|_{L^{\alpha_{0}}_{\omega}L_{t}^{\alpha_{0}}L_{x}^{\beta_{0}}([0,T])}.
\end{aligned}
\end{equation}
Note both $C_{1}, C_{2}$ does not depend on $T$.
\end{lemma}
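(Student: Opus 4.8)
The plan is to run a Duhamel/perturbation argument on $[0,T]$ anchored at an intermediate time $s$, and to extract the gain $\langle s\rangle^{-\epsilon_0/2}$ from the combination of the imposed time decay $\langle t\rangle^{-\epsilon_0}$ on the noise and the fact that, on $[s,T]$, all powers $\langle t\rangle^{-\epsilon_0}$ in the relevant integrals are already bounded by $\langle s\rangle^{-\epsilon_0}$ (half of which we spend, half of which we keep). First I would split $[0,T]=[0,s]\cup[s,T]$. On the bounded-but-large-in-time piece $[0,s]$, the local theory \eqref{eq: ap1}/\eqref{eq: start} together with the pathwise mass conservation already gives $\|u\|_{L^{\alpha_0}_\omega L_t^{\alpha_0}L_x^{\beta_0}([0,s])}\lesssim C_{1,s}$, a constant depending on $s$ (and on $M,\alpha_0,\epsilon_0$) but not on $T$; this is the source of the first term on the right of \eqref{eq: boot}.

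For the tail $[s,T]$ I would write the Duhamel formula \eqref{eq: duhamel2} with base point $s$, so that $u(t)=S(t-s)u(s)-i\int_s^t S(t-\sigma)N(u(\sigma))\,d\sigma-i\int_s^t S(t-\sigma)(\langle\sigma\rangle^{-\epsilon_0}Vu(\sigma))\,dB_\sigma-\tfrac12\int_s^t S(t-\sigma)(\langle\sigma\rangle^{-2\epsilon_0}V^2u(\sigma))\,d\sigma$. The linear term $S(t-s)u(s)$ and the deterministic nonlinear term $N(u)=|u|^{4/3}u$ are handled exactly as in the $\gamma>0$-small regime sketched in Subsection 3.2: use Theorem \ref{thm: dod} (deterministic global spacetime bound) and the modified stability/perturbation argument of \cite{fan2019wong} — suitably generalized to $d=3$ and to the present Strichartz pair $(\alpha_0,\beta_0)$ with $\alpha_0$ near $4$ — to reduce the whole thing to controlling the stochastic perturbation $M^\ast$. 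The point is that the deterministic theory contributes only to $C_{1,s}$ (or to an absolute constant), not to the coefficient of $\|u\|_{L^{\alpha_0}_\omega L_t^{\alpha_0}L_x^{\beta_0}([0,T])}$, provided one is willing to absorb small terms; here one must be a little careful, since the stability argument is genuinely nonperturbative (very nonlinear dynamics can happen with small probability), so the reduction to $M^\ast$ has to be done at the level of the $L^{\alpha_0}_\omega$-norm rather than pathwise with a fixed smallness threshold.

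The heart of the estimate is the stochastic term. Applying the Burkholder inequality (the Preliminary lemma) together with the dispersive bound $\|e^{it\Delta}\|_{L_x^{\beta_0'}\to L_x^{\beta_0}}\lesssim t^{-1/2+\eta_0}$ from \eqref{eq: close}, and the localization of $V$ (so that $\|Vu(\sigma)\|_{L_x^{\beta_0'}}\lesssim \|u(\sigma)\|_{L_x^2}\lesssim M$, or $\lesssim\|u(\sigma)\|_{L_x^{\beta_0}}$ depending on which norm one wants to close on), one bounds $\|M^\ast\|_{L^{\alpha_0}_\omega L_t^{\alpha_0}([s,T])}$ by a space-time integral carrying the weight $\langle\sigma\rangle^{-\epsilon_0}$ (from the noise) times the singular-but-integrable kernel $|t-\sigma|^{-1/2+\eta_0}$. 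On $[s,T]$ we have $\langle\sigma\rangle^{-\epsilon_0}\le\langle s\rangle^{-\epsilon_0/2}\langle\sigma\rangle^{-\epsilon_0/2}$, and the leftover $\langle\sigma\rangle^{-\epsilon_0/2}$, paired with the convolution kernel and Schur's test / Hardy–Littlewood–Sobolev in time, is what makes the remaining operator bounded on $L_t^{\alpha_0}$ with a constant $C_2$ independent of $T$ (this is exactly where $\alpha_0>4$, equivalently $\eta_0<1/2$, is used, so that $-1/2+\eta_0>-1$ and the kernel is locally integrable). Feeding this back through the stability reduction yields $\|u\|_{L^{\alpha_0}_\omega L_t^{\alpha_0}L_x^{\beta_0}([s,T])}\lesssim C_{1,s}'+\langle s\rangle^{-\epsilon_0/2}C_2\|u\|_{L^{\alpha_0}_\omega L_t^{\alpha_0}L_x^{\beta_0}([0,T])}$, and combining with the $[0,s]$ piece gives \eqref{eq: boot}.

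I expect the main obstacle to be the bookkeeping in the stability/perturbation step rather than the harmonic analysis: one must make sure the generalized version of Proposition 2.6 of \cite{fan2019wong} holds in $3$D for the pair $(\alpha_0,\beta_0)$, that the error term $\langle\sigma\rangle^{-2\epsilon_0}V^2u\in L_t^2L_x^{6/5}([s,T])$ with norm decaying in $s$ (using mass conservation and spatial localization of $V^2$), and — most delicately — that the whole argument can be organized so that the coefficient multiplying $\|u\|_{L^{\alpha_0}_\omega L_t^{\alpha_0}L_x^{\beta_0}([0,T])}$ on the right genuinely carries the full factor $\langle s\rangle^{-\epsilon_0/2}$ and not merely a constant. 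A secondary subtlety is that the Burkholder lemma fails at $\rho=\infty$, so one cannot run this at the $L^\infty_\omega$ level; working at $\rho=\alpha_0<\infty$ is fine and is exactly why the statement is phrased with $L^{\alpha_0}_\omega$, but it means the pathwise mass bound has to be invoked separately to control $\|u(\sigma)\|_{L_x^2}$ inside the Burkholder estimate.
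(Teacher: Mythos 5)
Your outline reproduces the strategy that the paper explicitly rules out for small $\epsilon_{0}$, and the gap sits precisely in your treatment of the It\^o-correction term. You propose to keep the Duhamel formula based on the free propagator $S(t)$ and to dispose of $-\tfrac12\int_{s}^{t}S(t-\sigma)\bigl(\langle\sigma\rangle^{-2\epsilon_{0}}V^{2}u(\sigma)\bigr)\,d\sigma$ by placing $\langle\sigma\rangle^{-2\epsilon_{0}}V^{2}u$ in the dual Strichartz space $L_{t}^{2}L_{x}^{6/5}([s,T])$ ``using mass conservation and spatial localization of $V^{2}$.'' But mass conservation only gives $\|\langle\sigma\rangle^{-2\epsilon_{0}}V^{2}u(\sigma)\|_{L_{x}^{6/5}}\lesssim\langle\sigma\rangle^{-2\epsilon_{0}}$, which lies in $L_{t}^{2}([s,\infty))$ only when $\epsilon_{0}>\tfrac14$; even if you feed in the bootstrap quantity $u\in L_{t}^{\alpha_{0}}L_{x}^{\beta_{0}}$ with $\alpha_{0}=4+$, the heuristic decay $\|u(t)\|_{L_{x}^{\beta_{0}}}\sim t^{-1/\alpha_{0}}$ gives integrability in $L_{t}^{2}$ only for $\epsilon_{0}\gtrsim\tfrac18$, and no choice of how you spend the factor $\langle s\rangle^{-\epsilon_{0}/2}$ repairs this with a $T$-independent constant. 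Since the lemma must hold for every $\epsilon_{0}>0$, this step fails; the paper states this obstruction verbatim (``treat $-\frac12\int_{0}^{t}S(t-s)(\langle s\rangle^{-2\epsilon_{0}}V^{2}u(s))ds$ in a pure perturbative way \dots this cannot work''), and the perturbative route you describe is exactly the one that only covers $\gamma>\gamma_{0}$ for some $\gamma_{0}<\tfrac12$, as in the paper's ``warm up'' discussion.

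The missing idea is to not treat that term as a perturbation at all: the paper absorbs it into the linear flow, i.e.\ perturbs around the damped NLS $iw_{t}+\Delta w=|w|^{4/3}w-i\langle t\rangle^{-2\epsilon_{0}}V^{2}w$, whose propagator $H(t,s)$ enjoys the same dispersive estimate (by the Journ\'e--Soffer--Sogge-type argument) and Strichartz estimates, and whose sign makes the extra term dissipative so that Dodson's theorem can be used as a black box to get a global space-time bound and a stability theory (Lemmas 4.1--4.5, Proposition \ref{prop: modistab}). The bootstrap is then run on the maximal quantity $M_{0}^{*}$ built from $H(t,s)$ applied to the \emph{stochastic} term only, with the $\langle s_{0}\rangle^{-\epsilon_{0}/2}$ gain extracted exactly as you describe (Burkholder, dispersive bound, Young/H\"older in time), and the deterministic reduction is done pathwise through a layer-cake decomposition over the dyadic size of $\|M_{0}^{*}\|_{L_{t}^{\alpha_{0}}}$ together with the improved modified stability, which crucially requires only the Strichartz-type norm of the perturbation $\eta$ (the only control $M_{0}^{*}$ provides), not its dual Strichartz norm. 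Your remaining steps (splitting at time $s$, Burkholder plus dispersive estimate for the noise term, closing on $\|u\|_{L_{x}^{\beta_{0}}}$ rather than on the mass, and working at $\rho=\alpha_{0}<\infty$) are consistent with the paper, but without the damped-propagator reformulation the argument does not close for small $\epsilon_{0}$.
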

 Note when $s\leq T\leq 1$, Lemma \ref{lem: boot} is just local theory\footnote{However, one feature of the method is that even local theory is not completely perturbative and allows very nonlinear dynamics.}, which follows from \cite{fan2018global}, see also \cite{DZhang}. 
 
 And the spacetime bound \eqref{eq: spacetimebound3d} will follow from Lemma \ref{lem: boot} via standard continuity argument.
 
 We will present the proof of Lemma \ref{lem: boot} in Section \ref{sec: boot}. But we explain one extra key idea here.
Even one applies a bootstrap strategy, a priori $u$ (formally) decays at most like $t^{-\frac{1}{4-}}$, and in particular $\langle t \rangle^{-2\epsilon_{0}}V^{2}u(t)$ is far from in the Dual of Strichartz space\footnote{One needs $\langle t \rangle^{-2\epsilon_{0}}V^{2}u(t)$ to be integrable in $L_{t}^{2}$ at least.}.  Thus, one attacks the bootstrap Lemma based on Duhamel formula \eqref{eq: duhamel2} and treat $-\frac{1}{2}\int_{0}^{t}S(t-s)(\langle s \rangle^{-2\epsilon_{0}}V^{2}u(s))ds$ in a pure perturbative way as in \cite{fan2018global}, this cannot work.

The observation is that this term may not need to be handled in a perturbative way.  We may not perturb around NLS but equations as follows (damped NLS),
\begin{equation}\label{eq: NLSD}
iw_{t}+\Delta w=|w|^{\frac{4}{d}}-i\langle t \rangle^{-2\epsilon_{0}}V^{2}w.
\end{equation}
Note that the sign before $V^{2}$ is good\footnote{Here it is important $V$ is real.} in the sense this is a dissipative term or a damping term.

Let $H(t,s)$ be the linear propagator \footnote{Since this equation is not time translation invariant, we cannot write as $H(t-s)$.}from $s$ to $t$,
\begin{equation}
iw_{t}+\Delta w=-i\langle t \rangle^{-2\epsilon_{0}}V^{2}w.
\end{equation}
i.e. if $w(s)=f$, then we will have $H(t,s)f:=w(t)$.
 
We rewrite the Duhamel formula \eqref{eq: duhamel2} as 
\begin{equation}\label{eq: duhamel1s}
u(t,x)=H(t)u_{0}-i\int_{0}^{t}H(t,s)(|u|^{\frac{4}{d}}u)ds-i\int_{0}^{t}H(t,s)(V(x)\langle s \rangle^{-\epsilon_{0}}u(s))dB_{s}.
\end{equation}
And note for any $t\in [a,b]\subset [0,1]$, one has 
\begin{equation}\label{eq: duhamel2s}
\begin{aligned}
u(t,x)=&H(t,a)u(a)-i\int_{a}^{t}H(t,s)(|u|^{\frac{4}{d}}u)ds-\\
&i\int_{a}^{t}H(t,s)(V(x)\langle s \rangle^{-\epsilon_{0}}u(s))dB_{s}
\end{aligned}
\end{equation}
and based on this, we will study the following quantity rather than $M^{*}$ in \eqref{eq: mstar },
\begin{equation}\label{eq: m0}
M_{0}(t):=\sup_{0\leq a\leq b\leq t}\big\|i\int_{a}^{b}H(t,s)(\langle s \rangle^{-\epsilon_{0}}Vu(s))dB_{s}\|_{L_x^{\beta_0}}.
\end{equation}
And we will do a improved modified stability based on damped NLS, see Proposition \ref{prop: modistab}. On one hand, Proposition \ref{prop: modistab} generalized the result of Proposition 2.6 \cite{fan2019wong}, on the other hand, it is an improved version which gives more detailed characterization of the solution which will play a crucial role in the next two steps.

See Section \ref{sec: boot} for proof of Lemma \ref{lem: boot}.

\subsection{Step 2: Decompose the solution via improved modified stability }
Recall step 1 gives us a space time global bound $L_{\omega}^{\alpha_{0}}L_{t}^{\alpha_{0}}L_{x}^{\beta_{0}}([0,\infty)\times \mathbb{R}^{3})$.

Now, rewrite the Duhamel formula of \eqref{eq: snls} based on usual NLS again, also recall we have fixed $\gamma=\epsilon_{0}$.
\begin{equation}\label{eq: duf3}
\aligned
&u(t,x)=S(t)u_{0}-i\int_{0}^{t}S(t-s)(|u|^{\frac{4}{3}}u(s))ds-i\int_{0}^{t}S(t-s)(\langle s \rangle^{-\epsilon_{0}}V(x)u(s))dB_{s}\\
&-\frac{1}{2}\int_{0}^{t}S(t-s)(\langle s \rangle^{-2\epsilon_{0}}V^{2}u(s))ds.
\endaligned
\end{equation}

The spacetime abound $L_{\omega}^{\alpha_{0}}L_{t}^{\alpha_{0}}L_{x}^{\beta_{0}}([0,\infty)\times \mathbb{R}^{3})$ is enough to handle the nonlinear part $|u|^{\frac{4}{3}}u$ in the study of the scattering dynamic.

However, heuristically a typical  $L_{t}^{\alpha_{0}}L_{x}^{\beta_{0}}$ function decays at most like $\frac{1}{\alpha_{0}}\sim \langle t \rangle^{-\frac{1}{4}+}$.  Even with the extra decay $\langle t \rangle^{-\epsilon_{0}}$, it is far from be integrable in $L_{t}^{2}$,  when the $\epsilon_{0}$ is small. This fact stops us to handle the linear stochastic part and linear damping part in \eqref{eq: duf3}. 

The key idea here now is to consider a new maximal type quantity
\begin{equation}\label{eq: M1}
M_1^{*}(t)=\sup_{0\leq r_{1}\leq r_{2}\leq t}\big\|\int_{r_{1}}^{r_{2}}H(t,s)(\langle s \rangle^{-\epsilon_{0}}Vu)dB_{s}\big\|_{L_{x}^{\beta_1}}.
\end{equation}
with $\beta_{1}$ chosen to close to $3$ enough such that
\begin{lemma}\label{lem: m1}
Let $u$ be the solution to \eqref{eq: snls} with $\|u\|_{L_{\omega}^{\alpha_{0}}L_{t}^{\alpha_{0}}L_{x}^{\beta_{0}}([0,\infty))}<\infty$. There exists $\beta_{1}$ close to $3$ enough, such that for some $\alpha_{1}$ with
\begin{equation}\label{eq: close2}
\frac{1}{\alpha_{1}}\geq \frac{1}{\alpha_{0}}+\frac{\epsilon_{0}}{10},
\end{equation}
one has 
\begin{equation}\label{eq: estimateform1}
\|M_{1}^{*}(t)\|_{L_{\omega}^{\alpha_{1}}L_{t}^{\alpha_{1}}([0,\infty))}\lesssim
\|u\|_{L_{\omega}^{\alpha_{0}}L_{t}^{\alpha_{0}}L_{x}^{\beta_{0}}([0,\infty))}.
\end{equation}
\end{lemma}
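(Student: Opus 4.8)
The plan is to reduce the estimate, for each fixed time $t$, to a one-dimensional weighted convolution inequality: first run the maximal Burkholder inequality of the preliminaries in $L^{\beta_1}_x$, then use the dispersive bound for the damped propagator $H(t,s)$ to trade spatial integrability for a factor $|t-s|^{-(1/2-\eta_1)}$, at which point the extra time decay $\langle s\rangle^{-\epsilon_0}$ coming from the noise appears as a genuine weight and furnishes the gain $\tfrac1{\alpha_1}\ge\tfrac1{\alpha_0}+\tfrac{\epsilon_0}{10}$ via a Hardy--Littlewood--Sobolev / Young type argument. Concretely, I first fix $\beta_1$ so close to $3$ that $3(\tfrac12-\tfrac1{\beta_1})=\tfrac12-\eta_1$ with $0<\eta_1\ll\epsilon_0$, and set $\tfrac1{\alpha_1}:=\tfrac1{\alpha_0}+\tfrac{\epsilon_0}{10}$; since $\alpha_0>4$ and $\epsilon_0$ is small, $2<\alpha_1<\alpha_0$, and $\beta_1'\simeq\tfrac32<\beta_0$.

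Fix $t>0$. The process $s\mapsto H(t,s)(\langle s\rangle^{-\epsilon_0}Vu(s))$ is a continuous adapted $L^{\beta_1}_x$-valued process on $[0,t)$: $H(t,\cdot)$ is a deterministic family of operators, $u$ is adapted and, by mass conservation, lies in $C_tL^2_x$ a.s., $V$ is Schwarz so $Vu(s)\in L^{\beta_1'}_x$, and $H(t,s)\colon L^{\beta_1'}_x\to L^{\beta_1}_x$ with the $L^{\beta_1}_x$-norm integrable in $s$ near $s=t$ (since $1-2\eta_1<1$). Hence the Burkholder inequality with $p=\beta_1$, $\rho=\alpha_1$ yields, for each $t$,
\[
\|M_1^*(t)\|_{L^{\alpha_1}_\omega}^2\lesssim \Big\|\int_0^t\|H(t,s)(\langle s\rangle^{-\epsilon_0}Vu(s))\|_{L^{\beta_1}_x}^2\,ds\Big\|_{L^{\alpha_1/2}_\omega}.
\]
Now I invoke the dispersive estimate for the damped propagator, $\|H(t,s)f\|_{L^{\beta_1}_x}\lesssim|t-s|^{-(1/2-\eta_1)}\|f\|_{L^{\beta_1'}_x}$ uniformly in $0\le s<t$ — this holds because the damping $\langle s\rangle^{-2\epsilon_0}V^2$ is bounded with the dissipative sign (so $\|H(t,s)\|_{L^2_x\to L^2_x}\le1$) and $V$ is Schwarz, and follows from the dispersive estimate \eqref{eq: dispersive2} for $e^{i(t-s)\Delta}$ by a Duhamel perturbation argument; it is part of the machinery behind Proposition~\ref{prop: modistab} — together with $\|\langle s\rangle^{-\epsilon_0}Vu(s)\|_{L^{\beta_1'}_x}\lesssim\langle s\rangle^{-\epsilon_0}\|u(s)\|_{L^{\beta_0}_x}$ (Hölder, $V$ Schwarz, $\beta_1'<\beta_0$). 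Writing $h(s):=\|u(s)\|_{L^{\beta_0}_x}^2$ and $K(t,s):=|t-s|^{-(1-2\eta_1)}\langle s\rangle^{-2\epsilon_0}\mathbf{1}_{\{0<s<t\}}$, the last display becomes $\|M_1^*(t)\|_{L^{\alpha_1}_\omega}^2\lesssim\big\|\int_0^tK(t,s)h(s)\,ds\big\|_{L^{\alpha_1/2}_\omega}$. Taking the $L^{\alpha_1/2}_t([0,\infty))$ norm (using Fubini, both exponents being $\alpha_1$, to recognise the left side as $\|M_1^*\|_{L^{\alpha_1}_\omega L^{\alpha_1}_t}^2$), then Minkowski in $\omega$ and the inclusion $L^{\alpha_1/2}_\omega\subset L^{\alpha_0/2}_\omega$ (valid since $\alpha_1\le\alpha_0$ on the probability space), the lemma reduces to the deterministic bound
\[
\Big\|\int_0^tK(t,s)h(s)\,ds\Big\|_{L^{\alpha_1/2}_t([0,\infty))}\lesssim\|h\|_{L^{\alpha_0/2}_t([0,\infty))},
\]
since $\|h\|_{L^{\alpha_0/2}_{\omega,t}}=\|u\|_{L^{\alpha_0}_\omega L^{\alpha_0}_tL^{\beta_0}_x}^2$.

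To prove the kernel bound I split $K=K\,\mathbf{1}_{\{t/2<s<t\}}+K\,\mathbf{1}_{\{0<s\le t/2\}}$. On the near-diagonal piece $\langle s\rangle\simeq\langle t\rangle$, so it is dominated by $\langle t\rangle^{-2\epsilon_0}$ times a genuine convolution against $|\cdot|^{-(1-2\eta_1)}$; cutting that kernel at scale $1$ (local part $L^1$, since $1-2\eta_1<1$; tail $L^r$ for $r$ just above $1$), Young's inequality lands the convolution in $L^{p_*}_t$ with $p_*$ only slightly larger than $\alpha_0/2$, and the spare factor $\langle t\rangle^{-2\epsilon_0}\in L^m_t$ (for every $m>1/(2\epsilon_0)$) then upgrades the output integrability to $L^{\alpha_1/2}_t$, exactly because $\tfrac1{\alpha_1}-\tfrac1{\alpha_0}=\tfrac{\epsilon_0}{10}$ is positive but smaller than $\epsilon_0$ and $\eta_1\ll\epsilon_0$. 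On the far-past piece $|t-s|\simeq t$, so it is dominated by $t^{-(1-2\eta_1)}\int_0^{t/2}\langle s\rangle^{-2\epsilon_0}h(s)\,ds$, and Hölder over the \emph{finite} window $(0,t/2)$ (using $2\epsilon_0(\alpha_0/2)'<1$) bounds the integral by $t^{\,1-2/\alpha_0-2\epsilon_0}\|h\|_{L^{\alpha_0/2}_t}$; thus this piece is $\lesssim t^{-2/\alpha_0+2\eta_1-2\epsilon_0}\|h\|_{L^{\alpha_0/2}_t}$, and $t^{-2/\alpha_0+2\eta_1-2\epsilon_0}\in L^{\alpha_1/2}_t([1,\infty))$ precisely when $\tfrac1{\alpha_1}-\tfrac1{\alpha_0}<\epsilon_0-\eta_1$, which holds; the range $t\le1$ is trivial (finite measure, $\alpha_1\le\alpha_0$). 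This closes the argument.

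The main obstacle is the deterministic kernel bound, and inside it the far-past contribution: there $\langle s\rangle^{-2\epsilon_0}$ supplies no pointwise smallness (it is $\simeq1$ for $s$ near $0$), so the whole improvement in the time exponent must come from the near-critical dispersive decay $|t-s|^{-(1-2\eta_1)}\simeq t^{-(1-2\eta_1)}$ beating the Hölder cost $t^{\,1-2/\alpha_0}$ of absorbing the weight over $(0,t/2)$ — this is exactly where $\alpha_0>4$ (comfortably giving $\alpha_1>2$, so that $t^{-(1-2\eta_1)}$ is $L^{\alpha_1/2}$-integrable at infinity) and $\eta_1\ll\epsilon_0$ enter. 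A secondary technical point is the uniform-in-$s<t$ dispersive estimate for the \emph{time-dependent} damped propagator $H(t,s)$ used above: because the damping coefficient itself decays in time one cannot simply cite a standard dispersive theorem, and must instead perturb off $e^{i(t-s)\Delta}$, exploiting the dissipative sign of $V^2$.
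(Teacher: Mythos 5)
Your proposal is correct and follows essentially the same route as the paper: fix $t$, apply the maximal Burkholder inequality in $L^{\beta_1}_x$, use the dispersive bound \eqref{eq: esdisdls} for $H(t,s)$ together with H\"older and the localization of $V$ to reduce to a weighted one-dimensional convolution estimate in time, let the weight $\langle s\rangle^{-\epsilon_0}$ pay for the drop from $\alpha_0$ to $\alpha_1$ via Young/H\"older, and finish with H\"older in $\omega$ on the finite measure space. The only (immaterial) differences are organizational: you split the $s$-integral at $s=t/2$ and pull the weight out as $\langle t\rangle^{-2\epsilon_0}$ on the near piece, whereas the paper splits at $|t-s|=1$ and H\"olders the weight in $s$; you run the kernel bound pathwise and integrate in $\omega$ last, whereas the paper applies Minkowski in $\omega$ first; and your inclusion ``$L^{\alpha_1/2}_\omega\subset L^{\alpha_0/2}_\omega$'' is written backwards, though the norm inequality you actually use is the correct one.
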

\begin{remark}
One crucial point here, we now \textbf{don't need} $(\alpha_{1}, \beta_{1})$ to be Strichartz admissible anymore.
\end{remark}
And we then take the full advantage of our improved modified stability, Proposition \ref{prop: modistab}, to prove
\begin{lemma}\label{lem: splitintotwo}
Let $u$ be the solution to \eqref{eq: snls} with $\|M_0^{*}(0,t)\|_{ L_{\omega}^{\alpha_0}L_t^{\alpha_0}}<\infty$,  \\and $\|M_1^{*}(0,t)\|_{ L_{\omega}^{\alpha_1}L_t^{\alpha_1}}<\infty$, then one can write $u=u_1+u_2$ such that  $u_1 \in L_{\omega}^{2}L_t^{2}L_x^{6}$ and $u_2 \in L_{\omega}^{\alpha_1}L_t^{\alpha_1}L_x^{{\beta}_1}$ with estimates
\begin{equation}\label{eq: firstestiamteforu1}
\|u_{1}\|_{L_{\omega}^{2}L_t^{2}L_x^{6}}\lesssim \|M^{*}_{0}(0,t)\|^{\frac{\alpha_0}{2}}_{ L_{\omega}^{\alpha_0}L_t^{\alpha_0}}+1,
\end{equation}
and
\begin{equation}\label{eq: fisrtestimateforu2}
\|u_{2}\|_{L_{\omega}^{\alpha_{1}}L_t^{\alpha_{1}}L_x^{\beta_{1}}}\lesssim \|M^{*}_{1}(t)\|_{ L_{\omega}^{\alpha_1}L_t^{\alpha_1}}.
\end{equation}
In particular,
\begin{equation}\label{eq: firstimprove}
 |V^{\frac{1}{2}}u|\in L_{\omega}^{\alpha_1}L_t^{\alpha_1}L^2_x.
\end{equation}
\end{lemma}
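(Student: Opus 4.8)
The plan is to run the improved modified stability (Proposition \ref{prop: modistab}), which compares the true solution $u$ to the solution of the deterministic damped NLS \eqref{eq: NLSD}, but to split the perturbative forcing $\int_a^b H(t,s)(\langle s\rangle^{-\epsilon_0}Vu)\,dB_s$ into two pieces according to which function space we can afford to place it in. The first piece, $u_1$, will absorb the ``rough'' part of the stochastic integral, which by Burkholder (the Preliminary Lemma) and the dispersive bound $\|H(t,s)\|_{L^{6/5}_x\to L^6_x}\lesssim |t-s|^{-1}$ is controlled only in the endpoint Strichartz space $L^2_tL^6_x$; the second piece, $u_2$, will be the part of the Duhamel expansion that the improved stability statement keeps in the $L^{\alpha_1}_tL^{\beta_1}_x$ regularity coming from Lemma \ref{lem: m1}. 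So the first step is to invoke Proposition \ref{prop: modistab} to obtain the decomposition $u = u_{\mathrm{lin+nl}} + (\text{stochastic forcing contribution})$, with the deterministic part already living in the Strichartz space $S([0,\infty))$ with norm bounded by a power of $\|M_0^*\|_{L^{\alpha_0}_\omega L^{\alpha_0}_t}$ plus $1$; this deterministic part, being in every Strichartz space, in particular lies in $L^2_tL^6_x$, and is the backbone of $u_1$.

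Second, I would estimate the stochastic forcing term itself. Setting $F(t):=\int_0^t H(t,s)(\langle s\rangle^{-\epsilon_0}Vu(s))\,dB_s$ (and its maximal versions $M_0^*, M_1^*$ over subintervals), I want to show $F \in L^2_\omega L^2_t L^6_x$ with norm $\lesssim \|M_0^*(0,t)\|_{L^{\alpha_0}_\omega L^{\alpha_0}_t}^{\alpha_0/2}+1$. The point is that $M_0^*(t)$ already dominates $\|F(t)\|_{L^6_x}$ pointwise in $t$ and $\omega$ (take $\beta_0$-index via $\beta_1$ close to $6$... more precisely, $M_0$ was defined with $L^{\beta_0}_x$; one must check that the same Burkholder/dispersive computation with the $L^{6/5}_x\to L^6_x$ kernel places $F$ in $L^6_x$, using $\|Vu\|_{L^{6/5}_x}\lesssim \|u\|_{L^{\beta_0}_x}$ since $V$ is Schwarz and $\beta_0>6/5$). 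Then $\|F\|_{L^2_tL^6_x}^2 = \int \|F(t)\|_{L^6_x}^2\,dt \le \int M_0^*(t)^2\,dt$, and since $\alpha_0>4>2$, an interpolation/Hölder in $t$ against the extra $\langle t\rangle^{-2\epsilon_0}$-type decay (or simply raising to the power $\alpha_0/2$ and using the global $L^{\alpha_0}_t$ bound together with the $L^\infty_tL^2_x$ mass bound at low times where integrability could fail near $t=0$) converts $\|M_0^*\|_{L^{\alpha_0}}$ into the claimed $L^2_tL^6_x$ control; the $\omega$-integrability is handled by Burkholder with exponent $\rho=\alpha_0\ge 2$. This yields $u_1$ and \eqref{eq: firstestiamteforu1}.

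Third, for $u_2$ I would use the part of Proposition \ref{prop: modistab} that, instead of throwing the whole forcing into $L^2_tL^6_x$, tracks it in the weaker-but-more-decaying space: Lemma \ref{lem: m1} gives exactly that $M_1^*\in L^{\alpha_1}_\omega L^{\alpha_1}_t$ with a gain $\frac1{\alpha_1}\ge\frac1{\alpha_0}+\frac{\epsilon_0}{10}$, and since $M_1^*(t)$ dominates the relevant tail of the Duhamel iteration in $L^{\beta_1}_x$, the modified stability argument propagates this into $\|u_2\|_{L^{\alpha_1}_\omega L^{\alpha_1}_t L^{\beta_1}_x}\lesssim \|M_1^*\|_{L^{\alpha_1}_\omega L^{\alpha_1}_t}$, which is \eqref{eq: fisrtestimateforu2}. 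Finally, \eqref{eq: firstimprove} follows by writing $V^{1/2}u = V^{1/2}u_1 + V^{1/2}u_2$: since $V$ is Schwarz, $V^{1/2}$ maps $L^6_x\to L^2_x$ and $L^{\beta_1}_x\to L^2_x$ with bounded multiplier norm (Hölder, as $V^{1/2}\in L^r_x$ for every $r$), so $V^{1/2}u_1\in L^2_\omega L^2_t L^2_x$ and $V^{1/2}u_2\in L^{\alpha_1}_\omega L^{\alpha_1}_t L^2_x$; embedding the first into $L^{\alpha_1}$-integrability in $(\omega,t)$ on compact pieces and using the decay on the tail (or just noting $L^2\cap L^{\alpha_1}$-type interpolation in $(\omega,t)$ since $\alpha_1$ is close to but below $4$) gives $V^{1/2}u\in L^{\alpha_1}_\omega L^{\alpha_1}_t L^2_x$.

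The main obstacle I expect is \emph{not} the Burkholder bookkeeping but making the improved modified stability Proposition \ref{prop: modistab} actually output a \emph{linear} splitting $u=u_1+u_2$ with the two clean norms — i.e., separating the forcing into its $L^2_tL^6_x$ part and its $L^{\alpha_1}_tL^{\beta_1}_x$ part \emph{before} the nonlinear stability iteration, and checking that the stability perturbation argument (a fixed-point / continuity argument on the difference $u-w$) respects this decomposition and does not remix the two pieces through the nonlinearity $|u|^{4/3}u$. Concretely, one must verify that the nonlinear interaction terms generated by $u_1$ and $u_2$ can be redistributed back into the $u_1$-slot (they land in $S\hookrightarrow L^2_tL^6_x$, since the global $L^{\alpha_0}_tL^{\beta_0}_x$ bound controls them) without spoiling \eqref{eq: firstestiamteforu1}. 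This requires the detailed form of Proposition \ref{prop: modistab}, which is why the lemma is stated as a consequence of it.
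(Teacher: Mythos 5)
Your overall frame (run Proposition \ref{prop: modistab} on subintervals and control the stochastic Duhamel term by maximal functions) is in the right family, but the central step of your argument fails, and it is precisely the step the paper is built to avoid. You propose to place the stochastic convolution $F(t)=\int_0^t H(t,s)(\langle s\rangle^{-\epsilon_0}Vu)\,dB_s$ (the ``rough part'' of your $u_1$) in $L^2_\omega L^2_tL^6_x$ via Burkholder plus the $L^{6/5}_x\to L^6_x$ dispersive bound. This cannot work: after Burkholder the relevant kernel is $\|H(t,s)\|^2_{L^{6/5}_x\to L^6_x}\sim (t-s)^{-2}$, which is not integrable at $s=t$; this is exactly why the paper runs the Burkholder/dispersive computation only for targets $L^{\beta}_x$ with $\beta<3$ (equivalently $\alpha>4$), calls $(4,3)$ a forbidden endpoint, and measures $M_0^*,M_1^*$ in $L^{\beta_0}_x,L^{\beta_1}_x$ with $\beta_i$ just below $3$, never in $L^6_x$. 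Your fallback --- a pointwise bound by $M_0^*(t)$ plus H\"older in $t$ against the $\langle t\rangle^{-\epsilon_0}$ decay to convert $L^{\alpha_0}_t$ into $L^2_t$ --- also fails: with $\alpha_0\approx 4$ this would need a weight in roughly $L^4_t([0,\infty))$, i.e.\ decay like $t^{-1/4-}$, which a small $\epsilon_0$ cannot provide. If this step were available, Lemmas \ref{lem: m1}, \ref{lem: mn}, \ref{lem: splitagain} and the whole iteration of Step 3 would be superfluous, since \eqref{eq: fffinal} would follow in one stroke; the fact that the lemma places $u_2$ only in $L^{\alpha_1}_\omega L^{\alpha_1}_tL^{\beta_1}_x$ is the whole point.

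The paper's proof keeps the stochastic forcing entirely out of $u_1$ instead of trying to estimate it in $L^2_tL^6_x$: fix $\omega$ with $\|M_0^*(t)\|_{L^{\alpha_0}_t}\sim A$, partition $[0,\infty)$ into $J\lesssim (A/\epsilon_M)^{\alpha_0}+1$ intervals on which $M_0^*$ is small in $L^{\alpha_0}_t$, and set $u_2:=\eta(t_j,t)=i\int_{t_j}^t H(t,s)(\langle s\rangle^{-\epsilon_0}Vu)\,dB_s$ on each interval, $u_1:=u-u_2$. Then \eqref{eq: fisrtestimateforu2} is immediate from the pointwise domination $\|u_2(t)\|_{L^{\beta_1}_x}\le M_1^*(t)$ (no stability argument is needed for $u_2$), while the refined conclusion \eqref{eq: extrakey} of Proposition \ref{prop: modistab} gives $\|u_1\|_{L^2_tL^6_x\cap L^\infty_tL^2_x([t_j,t_{j+1}])}\le B_M$ on each interval; summing over the $J$ intervals and then dyadically in $A$ over $\omega$ is what produces the exponent $\alpha_0/2$ in \eqref{eq: firstestiamteforu1}. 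Your worry about the nonlinearity ``remixing'' the two pieces is resolved by this same structure: the stability proposition is applied to $v=u-\eta$ interval by interval, so all nonlinear interactions are absorbed into the $u_1$-slot automatically. Your derivation of \eqref{eq: firstimprove} is essentially fine once you replace ``compact pieces plus tail decay'' by interpolation of $V^{1/2}u_1$ between $L^2_\omega L^2_tL^2_x$ and the uniform $L^\infty_tL^2_x$ bound coming from \eqref{eq: extrakey}, together with H\"older in $x$ for $V^{1/2}u_2$.
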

Note that via  $|V^{\frac{1}{2}}u|\in L_{\omega}^{\alpha_1}L_t^{\alpha_1}L^2_x$, and the fact $\frac{1}{\alpha_{1}}\geq \frac{1}{\alpha_{0}}+\frac{\epsilon_{0}}{10}$, we  have already improve the decay of $u$ and this allows us to treat smaller $\epsilon_{0}$.

We will prove Lemma \ref{lem: m1}, \ref{lem: splitintotwo} in Section \ref{sec: fisrtsplit}.
\subsection{Step 3: Iteration process to give enough decay for scattering}
To handle all $\epsilon_{0}>0$, we need to iterate the control of step 2 until $\alpha_{1}$ goes all the way to $2$, (or close to 2 enough depending on the value of $\epsilon_{0}$).

We will consider for $n\geq 1$,
\begin{equation}\label{eq: mn}
M_{n}^{*}(t):=\sup_{0\leq r_{1}\leq r_{2}\leq t}\big\|\int_{r_{1}}^{r_{2}}H(t,s)(\langle s \rangle^{-\epsilon_{0}}Vu)dB_{s}\big\|_{L_{x}^{\beta_1}}.
\end{equation}
where $\beta_{n}=\beta_{1}, \forall n.$ And prove iteratively
\begin{lemma}\label{lem: mn}
$\forall n\geq 1, \alpha_{n}\geq 2$. Let $u$ be the solution to \eqref{eq: snls}  with $|V^{\frac{1}{2}}u|\in L_{\omega}^{\alpha_{n}}L_t^{\alpha_{n}}L^2_x$, then there exists $\alpha_{n+1}$ such that
\begin{equation}\label{eq: iteformula}
\frac{1}{\alpha_{n+1}}\geq \frac{1}{\alpha_{n}}+\frac{\epsilon_{0}}{10},
\end{equation}
and 
\begin{equation}
    \|M_n^{\ast}(t)\|_{L^{\alpha_{n+1}}_{\omega}L_t^{\alpha_{n+1}}} \lesssim \|V^{\frac{1}{2}}u\|_{L_{\omega}^{\alpha_{n}}L_t^{\alpha_{n}}L^2_x} . 
\end{equation}
\end{lemma}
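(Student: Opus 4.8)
\emph{Proof plan.} The estimate has the same structure as Lemma~\ref{lem: m1}, and the plan is to repeat that argument with the pointwise-in-time quantity $\|V^{\frac12}u(s)\|_{L^2_x}$ in the role played there by the global spacetime norm of $u$. First I would fix $\beta_1$ close enough to $3$ that $3(\frac12-\frac1{\beta_1})=\frac12-\eta_1$ with $\eta_1\ll\epsilon_0$, so that the damped propagator obeys the dispersive bound $\|H(t,s)g\|_{L^{\beta_1}_x}\lesssim|t-s|^{-\frac12+\eta_1}\|g\|_{L^{\beta_1'}_x}$ (the same dispersive estimate for $H$ used in Step~1; it holds exactly as for $e^{it\Delta}$, the damping $\langle t\rangle^{-2\epsilon_0}V^2\ge0$ only helping). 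Since $V$ is Schwartz, H\"older in $x$ gives $\|\langle s\rangle^{-\epsilon_0}Vu(s)\|_{L^{\beta_1'}_x}\lesssim\langle s\rangle^{-\epsilon_0}\|V^{\frac12}u(s)\|_{L^2_x}$ (the relevant power of $V$ is integrable because $\beta_1$ is close to $3$), whence
\[
\big\|H(t,s)\big(\langle s\rangle^{-\epsilon_0}Vu(s)\big)\big\|_{L^{\beta_1}_x}\lesssim|t-s|^{-\frac12+\eta_1}\langle s\rangle^{-\epsilon_0}\|V^{\frac12}u(s)\|_{L^2_x}.
\]

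Next, for fixed $t$, I would apply the maximal Burkholder inequality with $L^{\beta_1}_x$-valued integrand ($\beta_1\ge2$) and outer exponent $\alpha_{n+1}$; the integrand $s\mapsto H(t,s)(\langle s\rangle^{-\epsilon_0}Vu(s))$ is adapted since $H$ is a deterministic propagator. This yields $\|M_n^{\ast}(t)\|_{L^{\alpha_{n+1}}_\omega}^2\lesssim\big\|\int_0^t\|H(t,s)(\langle s\rangle^{-\epsilon_0}Vu(s))\|_{L^{\beta_1}_x}^2\,ds\big\|_{L^{\alpha_{n+1}/2}_\omega}$. Inserting the dispersive bound, pulling the $L^{\alpha_{n+1}/2}_\omega$ norm inside the $ds$-integral by Minkowski (valid since $\alpha_{n+1}\ge2$), and using the embedding $L^{\alpha_n}_\omega\hookrightarrow L^{\alpha_{n+1}}_\omega$ on the probability space, I reach
\[
\|M_n^{\ast}(t)\|_{L^{\alpha_{n+1}}_\omega}^2\lesssim\int_0^t|t-s|^{-1+2\eta_1}\langle s\rangle^{-2\epsilon_0}G(s)\,ds,\qquad G(s):=\|V^{\frac12}u(s)\|_{L^{\alpha_n}_\omega L^2_x}^2.
\]
Taking the $L^{\alpha_{n+1}/2}_t$ norm and using Fubini (the $\omega$- and $t$-exponents coincide), the lemma reduces to the deterministic inequality
\[
\Big\|\int_0^t|t-s|^{-1+2\eta_1}\langle s\rangle^{-2\epsilon_0}G(s)\,ds\Big\|_{L^{\alpha_{n+1}/2}_t}\lesssim\|G\|_{L^{\alpha_n/2}_t},
\]
where $\|G\|_{L^{\alpha_n/2}_t}=\|V^{\frac12}u\|_{L^{\alpha_n}_\omega L^{\alpha_n}_t L^2_x}^2<\infty$ by hypothesis.

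For this deterministic bound I would absorb part of the weight first: since $\eta_1\ll\epsilon_0$ we have $2\epsilon_0>\frac{\epsilon_0}{5}+2\eta_1$, so $\langle\cdot\rangle^{-2\epsilon_0}\in L^r_t$ for $\frac1r=\frac{\epsilon_0}{5}+2\eta_1$, and H\"older gives $\langle\cdot\rangle^{-2\epsilon_0}G\in L^{\tilde p}_t$ with $\frac1{\tilde p}=\frac2{\alpha_n}+\frac{\epsilon_0}{5}+2\eta_1$. One-dimensional Hardy--Littlewood--Sobolev for fractional integration of order $2\eta_1$ (kernel dominated by $|t-s|^{-(1-2\eta_1)}$) then maps $L^{\tilde p}_t\to L^q_t$ with $\frac1q=\frac1{\tilde p}-2\eta_1=\frac2{\alpha_n}+\frac{\epsilon_0}{5}$, so choosing $\alpha_{n+1}$ with $\frac2{\alpha_{n+1}}=\frac1q$, i.e. $\frac1{\alpha_{n+1}}=\frac1{\alpha_n}+\frac{\epsilon_0}{10}$, gives \eqref{eq: iteformula} and closes the estimate.

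The step to watch is the Hardy--Littlewood--Sobolev requirement $\tilde p>1$, i.e. $\frac2{\alpha_n}+\frac{\epsilon_0}{5}+2\eta_1<1$. This forces $\alpha_n$ to stay bounded away from $2$ by a margin of order $\epsilon_0$; it also automatically keeps the output $\alpha_{n+1}$ strictly above $2$, so the Minkowski and Burkholder steps (which need exponent $\ge2$) remain licit. It is precisely this constraint that makes the iteration (alternated with the splitting of Lemma~\ref{lem: splitintotwo}) terminate after finitely many steps, which is ultimately why scattering is obtained in $L^{\rho^\ast}_\omega$ for some $\rho^\ast>1$ rather than for every finite $\rho$. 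Apart from the input dispersive estimate for $H(t,s)$, every step above is H\"older, Burkholder, Minkowski, or Hardy--Littlewood--Sobolev.
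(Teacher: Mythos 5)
Your proof is correct and takes essentially the same route as the paper, which establishes this lemma by rerunning the proof of Lemma \ref{lem: m1} with the single new observation $Vu=V^{\frac{1}{2}}\cdot V^{\frac{1}{2}}u$ and H\"older in $x$, exactly as you do (Burkholder, dispersive estimate for $H(t,s)$, Minkowski, H\"older in $\omega$ since the probability space is finite). The only deviation is the final time-convolution step, where you use one-dimensional Hardy--Littlewood--Sobolev after absorbing $\langle s\rangle^{-2\epsilon_{0}}$ by H\"older, whereas the paper splits the kernel into $|t-s|\le 1$ and $|t-s|\ge 1$ and applies Young's inequality; the two give the same gain $\frac{1}{\alpha_{n+1}}=\frac{1}{\alpha_{n}}+\frac{\epsilon_{0}}{10}$ and the same implicit constraint keeping $\alpha_{n+1}$ above $2$, which you correctly flag and which is exactly why the paper stops the iteration at $\alpha_{n_{0}}$ essentially equal to $2$.
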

\begin{lemma}\label{lem: splitagain}
Let $u$ be the solution to \eqref{eq: snls} with $\|M_0^{*}(0,t)\|_{ L_{\omega}^{\alpha_0}L_t^{\alpha_0}}<\infty$. If $\|M_n^{*}(t)\|_{ L_{\omega}^{\alpha_1}L_t^{\alpha_n}}<\infty$, then one can write $u=u_1+u_2$ such that  $u_1 \in L_{\omega}^{2}L_t^{2}L_x^{6}$ and $u_2 \in L_{\omega}^{\alpha_{n}}L_t^{\alpha_{n}}L_x^{{\beta}_1}$ with estimates
\begin{equation}\label{eq: iteestiamteforu1}
\|u_{1}\|_{L_{\omega}^{2}L_t^{2}L_x^{6}}\lesssim \|M_{0}^{*}(0,t)\|^{\frac{\alpha_0}{2}}_{ L_{\omega}^{\alpha_0}L_t^{\alpha_0}}+1,
\end{equation}
and
\begin{equation}\label{eq: iteestimateforu2}
\|u_{2}\|_{L_{\omega}^{\alpha_{n}}L_t^{\alpha_{n}}L_x^{\beta_{1}}}\lesssim \|M_{n}^{*}(t)\|_{ L_{\omega}^{\alpha_n}L_t^{\alpha_n}}
\end{equation}
In particular,
\begin{equation}\label{eq: iteimprove}
 |V^{\frac{1}{2}}u|\in L_{\omega}^{\alpha_{n}}L_t^{\alpha_{n}}L^2_x.
\end{equation}
\end{lemma}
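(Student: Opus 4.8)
The plan is to run the proof of Lemma~\ref{lem: splitintotwo} essentially verbatim, with the single change that the integrability index $\alpha_1$ is replaced throughout by $\alpha_n$; the only genuinely new point is to check that no step degenerates when $\alpha_n$ is as small as $2$. Observe first that the component $u_1$ and its estimate \eqref{eq: iteestiamteforu1} are produced by a construction that does not involve $n$ at all --- it is literally the one from Lemma~\ref{lem: splitintotwo} --- so all the new content sits in the construction of $u_2$ and in the final H\"older step \eqref{eq: iteimprove}.

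First I would fix a small absolute constant $\delta>0$ and, using the Step~1 global spacetime bound together with the hypotheses $\|M_0^{*}\|_{L_\omega^{\alpha_0}L_t^{\alpha_0}}<\infty$ and $\|M_n^{*}\|_{L_\omega^{\alpha_n}L_t^{\alpha_n}}<\infty$, build a random partition $0=t_0<t_1<\cdots$ of $[0,\infty)$ by stopping times, $t_j$ being the first time after $t_{j-1}$ at which one of the maximal stochastic quantities accumulated on $[t_{j-1},t]$ --- $M_0^{*}$ and $M_n^{*}$ --- or the Strichartz norm $\|u\|_{L_t^{\alpha_0}L_x^{\beta_0}([t_{j-1},t])}$ reaches size $\delta$. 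The number $N=N(\omega)$ of intervals is a.s.\ finite and, by a Chebyshev-type counting (each interval but the last consumes a unit amount $\delta^{\alpha_0}$ of the corresponding $L_t^{\alpha_0}$ integral), obeys the pathwise bound $N\lesssim_{\delta}\|M_0^{*}\|_{L_t^{\alpha_0}([0,\infty))}^{\alpha_0}+1$; on each $I_j:=[t_{j-1},t_j]$ all these quantities are $\le\delta$.

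On each good interval I would invoke the improved modified stability, Proposition~\ref{prop: modistab}, perturbing \emph{not} around the free NLS but around the damped equation \eqref{eq: NLSD}, whose linear flow is the propagator $H(t,s)$ of \eqref{eq: duhamel1s}--\eqref{eq: duhamel2s}. Two features make this legitimate: the damping $-i\langle s\rangle^{-2\epsilon_0}V^{2}w$ is dissipative since $V$ is real, so it only helps the energy estimates; and the global Strichartz bound for damped NLS follows from Dodson's Theorem~\ref{thm: dod} together with a standard long-time stability argument, with a constant depending only on the mass, which is pathwise conserved for \eqref{eq: snls}. Proposition~\ref{prop: modistab} then yields on $I_j$ a splitting $u=u_1^{(j)}+u_2^{(j)}$ where $u_1^{(j)}$ is within $O(\delta)$ of a genuine damped-NLS solution of mass $\lesssim M$ --- hence lies in every Strichartz space on $I_j$ with an $O(1)$ bound, in particular in $L_t^{2}L_x^{6}(I_j)$ --- while $u_2^{(j)}$ is the stochastic Duhamel remainder $-i\int_{t_{j-1}}^{t}H(t,s)(\langle s\rangle^{-\epsilon_0}Vu(s))\,dB_s$, controlled in $L_t^{\alpha_n}L_x^{\beta_1}(I_j)$ by $M_n^{*}$ restricted to $I_j$. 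Gluing, $u=u_1+u_2$ with $u_1:=\sum_j u_1^{(j)}\mathbf 1_{I_j}$ and $u_2:=\sum_j u_2^{(j)}\mathbf 1_{I_j}$. Summing over $j$ then finishes the proof: $L_t^{2}L_x^{6}$ is additive in $\ell^{2}$ over the $I_j$, so $\|u_1\|_{L_t^{2}L_x^{6}}^{2}\lesssim N$ pathwise and, after taking $L_\omega^{2}$ and inserting the bound on $N$, one gets \eqref{eq: iteestiamteforu1} with exponent $\alpha_0/2$; for $u_2$, since $\alpha_n\ge2$ the endpoint Burkholder inequality behind $M_n^{*}$ is available and the $\omega$- and $t$-exponents coincide, so the global $L_\omega^{\alpha_n}L_t^{\alpha_n}L_x^{\beta_1}$-norm is the $\ell^{\alpha_n}$-sum of the interval norms, each $\lesssim\|M_n^{*}\|_{L_\omega^{\alpha_n}L_t^{\alpha_n}(I_j)}$, giving \eqref{eq: iteestimateforu2}; and \eqref{eq: iteimprove} follows by H\"older in $x$ using that $V$ Schwarz puts $|V|^{1/2}$ in every $L_x^{\rho}$, since $\beta_1\in(2,3)$ turns $u_2$ into $|V|^{1/2}u_2\in L_\omega^{\alpha_n}L_t^{\alpha_n}L_x^{2}$ and the fact that each $u_1^{(j)}$ also lies in an admissible pair $(\alpha_n,r_n)$ with $r_n$ at most $6$ (possible because $(2,6)$ is admissible and $\alpha_n\ge2$) turns $u_1$ into $|V|^{1/2}u_1\in L_\omega^{\alpha_n}L_t^{\alpha_n}L_x^2$ after the same interval count.

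The hard part is not really internal to this lemma: its deterministic core, Proposition~\ref{prop: modistab}, is assumed in hand, and the rest is a transcription of the proof of Lemma~\ref{lem: splitintotwo}. The one thing that must be watched is uniformity in $n$ --- the interval count $N(\omega)$, the $\ell^{\alpha_n}$-summations and the H\"older exponents all have to survive down to $\alpha_n=2$, which is precisely the role played by the hypotheses $\alpha_n\ge2$ and $\beta_1>2$.
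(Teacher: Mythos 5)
Your overall strategy is the paper's: partition $[0,\infty)$ into intervals on which the stochastic Duhamel term (written with the damped propagator $H(t,s)$) is small, set $u_2$ equal to that Duhamel term on each interval and $u_1=u-u_2$, control $u_1$ interval by interval through the improved modified stability Proposition \ref{prop: modistab} (perturbing around the damped NLS, whose global theory comes from Dodson plus stability), control $u_2$ through the pointwise domination $\|u_2(t)\|_{L_x^{\beta_1}}\le M_n^{*}(t)$, and deduce \eqref{eq: iteimprove} by H\"older/interpolation using that $V$ is localized. This is exactly how the paper argues: it reuses the proof of Lemma \ref{lem: splitintotwo}, with the intervals still determined by $M_0^{*}$ and with $M_n^{*}$ entering only in the bound for $u_2$.

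One step of your writeup, however, does not hold as stated. Your stopping times are triggered when \emph{any} of $M_0^{*}$, $M_n^{*}$, or $\|u\|_{L_t^{\alpha_0}L_x^{\beta_0}([t_{j-1},t])}$ reaches size $\delta$, yet you then claim the pathwise interval count $N\lesssim_{\delta}\|M_0^{*}\|_{L_t^{\alpha_0}}^{\alpha_0}+1$. With your criteria the count also picks up contributions of order $\|M_n^{*}\|_{L_t^{\alpha_n}}^{\alpha_n}$ and $\|u\|_{L_t^{\alpha_0}L_x^{\beta_0}}^{\alpha_0}$; these are random quantities, not constants, so they cannot be absorbed into the implicit constant, and then \eqref{eq: iteestiamteforu1}, whose right-hand side involves only $M_0^{*}$, no longer follows from your count. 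The fix is what the paper does: base the partition on $M_0^{*}$ alone. This suffices because Proposition \ref{prop: modistab} only requires $\|\eta\|_{L_t^{\alpha_0}L_x^{\beta_0}([t_j,t_{j+1}])}\le \epsilon_M$, and $\|\eta(t)\|_{L_x^{\beta_0}}\le M_0^{*}(t)$ pointwise, exactly as in the proof of Lemma \ref{lem: boot2}. A smaller remark: Burkholder is not needed for \eqref{eq: iteestimateforu2} --- the pointwise bound by $M_n^{*}$ and integration in $(t,\omega)$ already give it; Burkholder only enters the companion Lemma \ref{lem: mn}. With these corrections your argument coincides with the paper's.
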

And we will stop iteration at some $n_{0}$, such that $\alpha_{n_{0}}\leq 2$.  Thanks to \eqref{eq: iteformula}, the iteration will stop at finite steps and $n_{0}<\infty$. And we may (from the proof) take $\alpha_{n_{0}}=2$

One will see the proof of Lemma \ref{lem: mn}, \ref{lem: splitagain} is essentially same (almost line by line same) as the proof of Lemma \ref{lem: m1}, \ref{lem: splitintotwo}.

To summarize, combining \eqref{eq: spacetimebound3d} will follow from Lemma \ref{lem: boot}, we have
\begin{lemma}\label{lem: summary}
Let $u$ solves \eqref{eq: snls}, we have 
\begin{equation}
 \|u\|_{L_{\omega}^{\alpha_{0}}L_{t}^{\alpha_{0}}L_{x}^{\beta_0}([0,\infty)\times \mathbb{R}^{3})}\lesssim_{\|u_{0}\|_{L_{\omega}^{\infty}L_{x}^{2}}, \alpha_{0},\epsilon_0} 1,
\end{equation}
and 
\begin{equation}\label{eq: fffinal}
 \||V|^{\frac{1}{2}}u\|_{L_{\omega}^{2}L_{t}^{2}L_{x}^{2}([0,\infty)\times \mathbb{R}^{3})}\lesssim_{\|u_{0}\|_{L_{\omega}^{\infty}L_{x}^{2}}, \alpha_{0},\epsilon_0} 1.
\end{equation}
\end{lemma}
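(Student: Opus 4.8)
The plan is to assemble Lemma~\ref{lem: summary} from the ingredients already prepared in Steps~1--3: the bootstrap estimate of Lemma~\ref{lem: boot} yields the global space-time bound by a continuity argument, and the splitting lemmas of Steps~2 and~3 are then iterated a finite number of times to push the integrability of $|V|^{1/2}u$ up to $L^2_\omega L^2_t L^2_x$. For the first assertion, put $\phi(T):=\|u\|_{L^{\alpha_0}_\omega L^{\alpha_0}_t L^{\beta_0}_x([0,T]\times\mathbb{R}^3)}$. By the global existence theory of \cite{fan2018global},~\cite{DZhang} together with the pathwise mass conservation law --- equivalently, by iterating the local bound \eqref{eq: start} over unit time steps --- one has $\phi(T)<\infty$ for every finite $T$, and $\phi$ is nondecreasing. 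Choose $s=s_0$, depending only on $(\|u_0\|_{L^\infty_\omega L^2_x},\alpha_0,\epsilon_0)$, so large that $\langle s_0\rangle^{-\epsilon_0/2}C_2\le \frac12$ with $C_2$ from Lemma~\ref{lem: boot}. Then \eqref{eq: boot} gives $\phi(T)\le C_{1,s_0}+\frac12\phi(T)$, hence $\phi(T)\le 2C_{1,s_0}$ uniformly in $T$; letting $T\to\infty$ and using monotone convergence produces $\|u\|_{L^{\alpha_0}_\omega L^{\alpha_0}_t L^{\beta_0}_x([0,\infty)\times\mathbb{R}^3)}\lesssim 1$. From this bound, the Burkholder inequality, and the inhomogeneous estimate for the damped propagator $H(t,s)$ (this being part of the proof of Lemma~\ref{lem: boot}) one also records $\|M_0^*(0,t)\|_{L^{\alpha_0}_\omega L^{\alpha_0}_t}\lesssim 1$, the input needed by the two splitting lemmas.

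For the second assertion I would iterate. As the base case, Lemma~\ref{lem: m1} furnishes $\beta_1$ close to $3$ and $\alpha_1$ with $\frac1{\alpha_1}\ge\frac1{\alpha_0}+\frac{\epsilon_0}{10}$ such that $\|M_1^*(t)\|_{L^{\alpha_1}_\omega L^{\alpha_1}_t}\lesssim \|u\|_{L^{\alpha_0}_\omega L^{\alpha_0}_t L^{\beta_0}_x}\lesssim 1$, and then Lemma~\ref{lem: splitintotwo} gives, via \eqref{eq: firstimprove}, $|V|^{1/2}u\in L^{\alpha_1}_\omega L^{\alpha_1}_t L^2_x$ with norm $\lesssim 1$. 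The point that makes the iteration go is that the maximal processes $M_n^*$ in \eqref{eq: mn} are all literally the same object (independent of $n$), so only the Lebesgue exponent moves. Inductively, given $|V|^{1/2}u\in L^{\alpha_n}_\omega L^{\alpha_n}_t L^2_x$ with $\alpha_n\ge 2$ and norm $\lesssim 1$, Lemma~\ref{lem: mn} produces $\alpha_{n+1}$ with $\frac1{\alpha_{n+1}}\ge\frac1{\alpha_n}+\frac{\epsilon_0}{10}$ and $\|M_n^*(t)\|_{L^{\alpha_{n+1}}_\omega L^{\alpha_{n+1}}_t}\lesssim 1$; since $M_{n+1}^*=M_n^*$ and $\|M_0^*(0,t)\|_{L^{\alpha_0}_\omega L^{\alpha_0}_t}\lesssim 1$, Lemma~\ref{lem: splitagain} (applied with index $n+1$) upgrades this to $|V|^{1/2}u\in L^{\alpha_{n+1}}_\omega L^{\alpha_{n+1}}_t L^2_x$ with norm $\lesssim 1$. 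Because $\frac1{\alpha_n}$ increases by at least $\frac{\epsilon_0}{10}$ at each step while $\frac1{\alpha_0}<\frac14$, after at most $n_0\le\lceil 10(\frac12-\frac1{\alpha_0})/\epsilon_0\rceil$ steps one reaches $\alpha_{n_0}\le 2$; using the freedom in \eqref{eq: iteformula} at the final step one takes $\alpha_{n_0}=2$ exactly, which yields $\||V|^{1/2}u\|_{L^2_\omega L^2_t L^2_x([0,\infty)\times\mathbb{R}^3)}\lesssim 1$. As $n_0$ is finite and depends only on $(\alpha_0,\epsilon_0)$, the accumulated implicit constant is finite and depends only on $(\|u_0\|_{L^\infty_\omega L^2_x},\alpha_0,\epsilon_0)$, which is \eqref{eq: fffinal}.

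Granting the preceding lemmas, the proof of Lemma~\ref{lem: summary} is essentially bookkeeping, so there is no single hard analytic step. The two places that need care are: (i) justifying that $\phi(T)$ is finite for each finite $T$ \emph{before} performing the absorption --- this is precisely where one must use the global existence result and mass conservation rather than a purely perturbative local theory, as emphasized around \eqref{eq: start}; and (ii) keeping the iteration indices consistent, in particular exploiting that $M_n^*$ does not depend on $n$ so that the output of Lemma~\ref{lem: mn} is admissible input for Lemma~\ref{lem: splitagain}, and checking that the iteration terminates after finitely many steps with an implicit constant that does not accumulate to infinity.
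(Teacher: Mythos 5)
Your proposal is correct and follows essentially the same route as the paper: the first bound is obtained from Lemma \ref{lem: boot} by absorption (the paper's ``standard continuity argument''), using \eqref{eq: start} and mass conservation for a priori finiteness on $[0,T]$ and Lemma \ref{lem: boot1} to record $\|M_0^*\|_{L^{\alpha_0}_\omega L^{\alpha_0}_t}\lesssim 1$, and the second bound is obtained by iterating Lemmas \ref{lem: m1}, \ref{lem: splitintotwo}, \ref{lem: mn}, \ref{lem: splitagain} finitely many times until the exponent reaches $2$, exactly as the paper intends (including the observation that $M_n^*$ is independent of $n\geq 1$ and that one may take $\alpha_{n_0}=2$).
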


We finally remark, by exploring the martingale structure in the stochastic part in \eqref{eq: duf3} and observe other parts are handled via integration in time, one can upgrade convergence in $L_{\omega}^{\rho^{*}}$ in Theorem \ref{thm: main} into almost sure convergence.

We will quickly go over the proof of Lemma \ref{lem: mn}, \ref{lem: splitagain} and explain how Lemma \ref{lem: summary} gives \eqref{eq: scattering3d} and \eqref{eq: almostsure} in Section \ref{sec: conclude}.

\section{Proof of Lemma \ref{lem: boot}}\label{sec: boot}
Recall in \eqref{eq: snls}, we fix $\gamma=\epsilon_{0}>0$. We also fix $\|u_{0}\|_{L_{\omega}^{\infty}L_{x}^{2}}=M$.  All constants in the analysis may (implicitly) rely on those two parameters.

Recall also  Strichartz pair $(\alpha_0,\beta_0)=(4+,3-)$ is chosen so that
\eqref{eq: close} holds.

To start the proof of Lemma \ref{lem: boot}, we first present all the needed property for Damped NLS in dimension 3.
\begin{equation}
iw_{t}-\Delta w=|w|^{\frac{4}{3}}w-i\langle t \rangle^{-2\epsilon_0} V^{2}w.
\end{equation}
\subsection{Properties for Damped NLS }
We start with the linear dispersive estimate. Recall $H(t,s)$ be the linear propagator to 
\begin{equation}\label{eq: dls}
iw_{t}-\Delta w+i\langle t \rangle^{-2\epsilon_0} V^{2}w=0.
\end{equation}
Based on the crucial observation in \cite{JSS}, the term $i\langle t \rangle^{-2\epsilon_0} V^{2}w$ can be treated in a perturbative way (see also \cite{Schlagsurvey}).
\begin{lemma}\label{lem: dlsdis}
For $t>s$, one has dispersive estimate (uniform in $t,s$) as follows,
\begin{equation}
\|H(t,s)f\|_{L_{x}^{\infty}}\lesssim \|f\|_{L_{x}^{1}}(t-s)^{-\frac{3}{2}}.
\end{equation}
\end{lemma}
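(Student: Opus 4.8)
The plan is to prove the dispersive estimate for $H(t,s)$ by treating the potential term $i\langle t\rangle^{-2\epsilon_0}V^2 w$ perturbatively against the free propagator $e^{i(t-s)\Delta}$, exactly in the spirit of the Keel–Tao / Journ\'e–Soffer–Sogge type argument referenced as \cite{JSS}. The key structural point is that the time-dependent coefficient $\langle s\rangle^{-2\epsilon_0}$ is bounded (by $1$) and decaying, so it never hurts; the only real input needed is that $V^2$ is a Schwartz (hence rapidly decaying, bounded) potential, and that the damping sign is dissipative so the propagator $H(t,s)$ is a contraction on $L^2_x$ (indeed $\|H(t,s)f\|_{L^2_x}\le \|f\|_{L^2_x}$, since $\frac{d}{dt}\|w\|_{L^2_x}^2 = -2\langle t\rangle^{-2\epsilon_0}\|Vw\|_{L^2_x}^2\le 0$). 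This $L^2$ contractivity plus the free dispersive estimate \eqref{eq: dispersive} is the raw material.

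First I would write the Duhamel representation of $H(t,s)$ relative to the free flow:
\begin{equation*}
H(t,s)f = e^{i(t-s)\Delta}f - \int_s^t e^{i(t-\tau)\Delta}\big(\langle\tau\rangle^{-2\epsilon_0}V^2 H(\tau,s)f\big)\,d\tau,
\end{equation*}
and iterate it to obtain the Dyson/Born series $H(t,s) = \sum_{n\ge 0}(-1)^n H_n(t,s)$, where $H_n$ consists of $n$ insertions of the potential separated by free propagators over the simplex $s<\tau_1<\cdots<\tau_n<t$. The strategy is then the standard one: bound the $n=0$ term by \eqref{eq: dispersive}; bound a "middle" block of consecutive free propagators crudely; and use the smallness/decay of the potential together with $L^1\to L^\infty$ and $L^\infty\to L^\infty$ (or $L^2\to L^2$) bounds on the remaining pieces to sum the series. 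Concretely, one splits each term according to whether the first and last propagator legs are "long" (length $\gtrsim (t-s)/n$, handled by dispersion with the $(t-s)^{-3/2}$ gain) or one exploits that at least one free-propagator leg has length $\gtrsim (t-s)/2$ so that $e^{i(t-\tau)\Delta}$ contributes $(t-\tau)^{-3/2}\gtrsim (t-s)^{-3/2}$, while the rest of the chain is controlled in $L^2$ by Strichartz/Hölder against $\|V^2\|$ in appropriate Lorentz norms, giving a convergent geometric-type series in $n$ with constant depending only on $\|V\|_{\mathrm{Schwartz}}$.

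I would expect the main obstacle to be the endpoint/summability bookkeeping: controlling the iterated integrals so that the constant is uniform in both $t$ and $s$ and the series in $n$ actually converges. In $d=3$, $\|e^{i\tau\Delta}\|_{L^{6/5}\to L^6}\lesssim \tau^{-1/2}$ is locally integrable, so the intermediate legs can be absorbed using $V^2\in L^{3/2}_x$ (or a Lorentz refinement), which is the standard way \cite{JSS,Schlagsurvey} handle rough-in-time potentials — the point being that one never needs any smoothness or decay in $t$ of the potential beyond boundedness, which $\langle t\rangle^{-2\epsilon_0}\le 1$ trivially supplies. A minor additional care is that the equation is not time-translation invariant, so one must carry both endpoints through the estimates rather than reducing to $H(t-s)$; but since every bound above is already stated for the pair $(t,\tau)$ with $\tau\in[s,t]$, this causes no real trouble. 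Once the series is summed one reads off $\|H(t,s)f\|_{L^\infty_x}\lesssim (t-s)^{-3/2}\|f\|_{L^1_x}$ with an implicit constant depending only on $V$, as claimed.
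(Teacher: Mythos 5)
The paper itself offers essentially no proof of this lemma: it is justified by the one-line appeal to the perturbative observation of \cite{JSS} (see also \cite{Schlagsurvey}). Your sketch tries to fill this in with a Duhamel/Born series around $e^{i(t-s)\Delta}$, which is the right general spirit, but the step on which everything hinges --- summing the series with ``a convergent geometric-type series in $n$ with constant depending only on $\|V\|_{\mathrm{Schwartz}}$'', \emph{uniformly in} $t-s$ --- is exactly what does not follow from the ingredients you list. Your term-by-term bounds take absolute values of each insertion, so they are blind to the factor $-i$ (the dissipative sign) and to the time decay $\langle\tau\rangle^{-2\epsilon_0}$, which you explicitly say you do not need beyond boundedness. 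If such a sign-blind, decay-blind summation closed with constants depending only on Schwartz norms of $V$, the identical bookkeeping would prove $\|e^{-itH}\|_{L^1\to L^\infty}\lesssim t^{-3/2}$ for the static attractive Hamiltonian $H=-\Delta-V^2$ with $V$ large, which is false as soon as $H$ has a bound state. Concretely, the routing you propose ($L^{6/5}\to L^6$ legs with the locally integrable weight $\tau^{-1/2}$, H\"older against $\|V^2\|_{L^{3/2}}$) does make each term finite and the series converge for fixed $t-s$, but the $n$-fold simplex integration only yields a factor like $C^n\|V\|^{2n}(t-s)^{n/2-1}/\Gamma(n/2)$, i.e.\ a sum growing exponentially in $t-s$; the uniform $(t-s)^{-3/2}$ decay is lost precisely in the regime the lemma is about.

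To close the gap one must use one of the structural features you set aside: either genuine smallness of the (effective) potential in a scale-invariant norm, as in small-potential dispersive estimates, or the large-potential machinery of \cite{JSS} and its successors, which requires resolvent/spectral input (absence of zero-energy eigenvalues and resonances, projection off bound states) rather than a naive series summation. In the present setting the natural fix is to exploit what is special here: the coefficient $\langle\tau\rangle^{-2\epsilon_0}$ makes the effective potential small (e.g.\ in the global Kato norm) for $\tau\geq T_0(V,\epsilon_0)$, so the perturbative series does close uniformly on $[T_0,\infty)$, while on the remaining bounded time region one may afford constants depending on $T_0$; alternatively one can try to use the dissipative sign, which rules out bound states, but that requires an argument that actually sees the sign, not absolute-value bounds. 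As written, your proof would need one of these additional inputs; the paper avoids the issue by citing \cite{JSS} outright.
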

Note that a direct energy estimate gives 
\begin{equation}
\|H(t,s)f\|_{L_{x}^{2}}\leq \|f\|_{L_{x}^{2}}.
\end{equation}
Thus, one can apply the classical Riesz-Thorin interpolation to derive
\begin{equation}\label{eq: esdisdls}
\|H(t,s)f\|_{L_{x}^{p}}\lesssim \|f\|_{L_{x}^{p'}}(t-s)^{-3(\frac{1}{2}-\frac{1}{p})}.
\end{equation}
\
Next, we present the associated Strichartz estimates for the free solution and Duhamel term for \eqref{eq: dls},
\begin{lemma}
We have the following Strichartz type estimates
\begin{equation}\label{eq: dlsstri}
\|H(t,a)f\|_{S(a,\infty)}\lesssim \|f\|_{L_{x}^{2}(\mathbb{R}^{3})},
\end{equation}
and
\begin{equation}\label{eq: dlsstri2}
\|\int_{a}^{t}H(t,s)F(s)ds\|_{S(a,\infty)}\lesssim \|F\|_{N(a,\infty)},
\end{equation}
where we recall $S$ is the Strichartz norm and $N$ is the dual Strichartz norm.
\end{lemma}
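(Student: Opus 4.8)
The plan is to treat the damping term $i\langle t\rangle^{-2\epsilon_0}V^2 w$ as a perturbation of the \emph{free} propagator $S(t)$ and to feed the resulting error into the free Strichartz estimate of Lemma \ref{Euclidean}; the one delicate point is that this perturbation does \emph{not} close by soft arguments on all of $[0,\infty)$, so the dissipative structure of \eqref{eq: dls} has to be used. (Alternatively: since \eqref{eq: esdisdls} gives the $L_x^{p'}\to L_x^p$ dispersive bound with exactly the free rate $(t-s)^{-3(1/2-1/p)}$ uniformly in $t>s$, one can obtain all of these estimates by adapting the Keel--Tao argument — which uses nothing beyond the $L_x^2$ bound and the retarded kernel bounds — and this also recovers the $d=3$ double endpoint $(2,6)$. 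Below I sketch the self-contained perturbative route.)

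For \eqref{eq: dlsstri}, put $w(t)=H(t,a)f$ and Duhamel against $S$: $H(t,a)f=S(t-a)f-\int_a^t S(t-s)\big(\langle s\rangle^{-2\epsilon_0}V^2 H(s,a)f\big)\,ds$. By Lemma \ref{Euclidean} (in its $S$, $N$ form) it suffices to bound $\langle s\rangle^{-2\epsilon_0}V^2 H(s,a)f$ in $N(a,\infty)$; taking the $L_t^2L_x^{6/5}$ endpoint and Hölder with $V\in L_x^3$ (valid since $V$ is Schwartz), $\|\langle s\rangle^{-2\epsilon_0}V^2 H(s,a)f\|_{L_x^{6/5}}\lesssim\|\langle s\rangle^{-\epsilon_0}V H(s,a)f\|_{L_x^2}$, so everything reduces to showing $\langle s\rangle^{-\epsilon_0}V H(\cdot,a)f\in L_t^2L_x^2$. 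This is exactly what the good sign provides: pairing \eqref{eq: dls} with $\bar w$ and taking real parts gives the dissipation identity $\tfrac12\tfrac{d}{dt}\|H(t,a)f\|_{L_x^2}^2=-\langle t\rangle^{-2\epsilon_0}\|V H(t,a)f\|_{L_x^2}^2$, hence $\|\langle s\rangle^{-\epsilon_0}V H(\cdot,a)f\|_{L_t^2L_x^2}^2=\int_a^\infty\langle s\rangle^{-2\epsilon_0}\|V H(s,a)f\|_{L_x^2}^2\,ds\le\tfrac12\|f\|_{L_x^2}^2$, and \eqref{eq: dlsstri} follows.

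For \eqref{eq: dlsstri2} I would run the identical scheme with $w(t)=\int_a^t H(t,s)F(s)\,ds$: Duhamel against $S$ and the same Hölder step give $\|w\|_{S(a,\infty)}\lesssim\|F\|_{N(a,\infty)}+\|\langle t\rangle^{-\epsilon_0}Vw\|_{L_t^2L_x^2}$, while the forced energy identity $\tfrac12\tfrac{d}{dt}\|w\|_{L_x^2}^2\le-\langle t\rangle^{-2\epsilon_0}\|Vw\|_{L_x^2}^2+\big|\int F\bar w\,dx\big|$ together with $w(a)=0$, Hölder in time and the definitions of $S$, $N$ give $\|\langle t\rangle^{-\epsilon_0}Vw\|_{L_t^2L_x^2}^2=\int_a^\infty\langle t\rangle^{-2\epsilon_0}\|Vw\|_{L_x^2}^2\,dt\lesssim\|F\|_{N(a,\infty)}\|w\|_{S(a,\infty)}$. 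Hence $\|w\|_{S(a,\infty)}\lesssim\|F\|_{N(a,\infty)}+\big(\|F\|_{N(a,\infty)}\|w\|_{S(a,\infty)}\big)^{1/2}$, and Young's inequality absorbs the last term; to make the absorption rigorous I would first work on $[a,T]$ (where $\|w\|_{S(a,T)}<\infty$, since the energy identity already gives $\|w\|_{L_t^\infty L_x^2}\lesssim\|F\|_{N}$), get a bound independent of $T$, and let $T\to\infty$.

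The step I expect to be the real obstacle — and the reason a bare perturbative expansion is not enough — is precisely the one above: from the $L_x^2$ bound alone the error $\langle s\rangle^{-2\epsilon_0}V^2 H(s,a)f$ only sits in $L_t^\infty L_x^{6/5}$, and $\langle s\rangle^{-2\epsilon_0}$ is far from $L_t^2$ once $\epsilon_0$ is small. The dissipation identity is what supplies the missing ($V$-weighted) local-smoothing bound, and the spatial decay of $V$ is what converts the required $L_x^{6/5}$ control into an $L_x^2$ one. With this in hand, the full admissible Strichartz range — including the $d=3$ endpoint — is inherited directly from Lemma \ref{Euclidean}.
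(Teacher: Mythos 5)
Your proposal is correct and follows essentially the same route as the paper: Duhamel against the free propagator $S(t)$, the pathwise dissipation identity $\frac{d}{dt}\|w\|_{L_x^2}^2=-2\langle t\rangle^{-2\epsilon_0}\|Vw\|_{L_x^2}^2$ (with the forcing term $\int|F||w|\le\|F\|_{N}\|w\|_{S}$ in the inhomogeneous case) to place $\langle t\rangle^{-2\epsilon_0}V^2w$ in $L_t^2L_x^{6/5}$ via the localization of $V$, and then the free Strichartz estimate plus absorption of the $\|F\|_{N}^{1/2}\|w\|_{S}^{1/2}$ term. Your finite-interval truncation to justify the absorption is exactly the bootstrap the paper relegates to a footnote, so no gap remains.
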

\begin{proof}
The proof does not rely on \eqref{eq: dls}. The proof can be obtained by certain energy estimates and Strichartz estimates for linear Schr\"odinger equation. Without loss of generality, we consider $a=0$. We denote $\tilde{V}=\langle t \rangle^{-\epsilon_0} V$.

Let $w$ solves \eqref{eq: dls} with initial condition $w(0)=f$, i.e. $w=H(t,0)f$. 
We first observe that
\begin{equation}
    \frac{d}{dt}\int |w|^2dx=-\int \tilde{V}^2 |w|^2 dx,
\end{equation}
by Fundamental Theorem of Calculus, we have 
\begin{equation}
    \int_0^{+\infty} |\tilde{V}|^2|w|^2dxdt \lesssim \|w_0\|^2_{L^2_x}.
\end{equation}
Moreover, noticing $V$ is localized and applying the H\"older, we have
\begin{equation}
    \|\tilde{V}^2w\|_{L_x^{\frac{6}{5}}}\lesssim \|\tilde{V} w\|_{L_x^2}
\end{equation}
then we see  
\begin{equation}
    \| \tilde{V}^2w \|_{L^2_tL_x^{\frac{6}{5}}} \lesssim \|f\|_{L^2_x}.
\end{equation}
By Duhamel Formula based on Schr\"odinger equation, we write $w$ as
\begin{equation}
w(t,x)=S(t)w(0)-i\int_{0}^{t}S(t-s)(-i\tilde{V}^2w)ds.
\end{equation}
Now one may apply the Strichartz estimate to obtain \eqref{eq: dlsstri}.

Then we move on to the control of the Duhamel term. Consider
\begin{equation}
iw_{t}+\Delta w=-i\tilde{V}^{2}w+F, \quad w(0,x)=0.
\end{equation}
Direct calculations give
\begin{equation}
    \frac{d}{dt}\int|w|^2=-\int \tilde{V}^2w^2+O(\int |F|\cdot|w|),
\end{equation}
which further implies the following important a-priori estimate
\begin{equation}
\int \tilde{V}^{2}w^{2}\leq \int_{t,x} |F||w|dtdx \leq \|F\|_{N}\|w\|_{S}.
\end{equation}
Plug this into the Duhamel Formula based on usual Schr\"odinger equation, this gives
\begin{equation}
\|w\|_{S}\leq C(\|F\|_{N}+\|w\|_{S}^{\frac{1}{2}}\|F\|_{N}^{\frac{1}{2}}).
\end{equation}
This implies \footnote{We note that rigorous proof indeed involves a bootstrap argument.}
\begin{equation}
\|w\|_{S(0,\infty)}\leq \|F\|_{N(0,\infty)}.
\end{equation}
Similarly, we derive
\begin{equation}\label{eq: 11}
\big\|\int_{a}^{t}H(t,s)F(s)ds\big\|_{S(a,b)}\leq \|F\|_{N(a,b)}.
\end{equation}

\end{proof}
Next, we observe that Dodson's global well-posedness result \cite{dodson2012global} can be used as a blackbox to derive the associated results for 
\begin{equation}\label{eq: dnls}
\begin{cases}
iw_{t}-\Delta w+i\langle t \rangle^{-2\epsilon_0}V^{2}w=|w|^{\frac{4}{3}}w,\\
w(0)=w_{0}.
\end{cases}
\end{equation}
We have,
\begin{lemma}\label{lem: gwpdnls}
For $w$ to \eqref{eq: dnls} with initial data $w_{0}\in L_{x}^{2}$, the solution is global forward in time, and satisfies 
\begin{equation}\label{eq: gbdnls}
\|w\|_{L_{t}^{\alpha}L_{x}^{\beta}((0,+\infty)\times \mathbb{R}^3)}\leq \|w\|_{L_{t}^{2}L_{x}^{6}\cap L_{t}^{\infty}L_{x}^{2}}\lesssim_{\|w_{0}\|_{L_{x}^{2}}} 1,
\end{equation}
where $(\alpha,\beta)$ is Strichartz admissible pair.
\end{lemma}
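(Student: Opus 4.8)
The plan is to deduce Lemma \ref{lem: gwpdnls} from Dodson's theorem (Theorem \ref{thm: dod}) by a perturbative comparison of the damped equation \eqref{eq: dnls} with the genuine mass-critical NLS \eqref{eq: mnls}, treating the damping term $-i\langle t\rangle^{-2\epsilon_0}V^2 w$ as a forcing term. First I would record the pathwise-type a priori bounds that come for free from the dissipative structure: multiplying \eqref{eq: dnls} by $\bar w$ and taking imaginary parts gives $\frac{d}{dt}\|w(t)\|_{L_x^2}^2 = -2\int \langle t\rangle^{-2\epsilon_0}V^2|w|^2\,dx \le 0$, so $\|w\|_{L_t^\infty L_x^2}\le \|w_0\|_{L_x^2}$ and, by the fundamental theorem of calculus, $\int_0^\infty\!\!\int \langle t\rangle^{-2\epsilon_0}V^2|w|^2\,dx\,dt \lesssim \|w_0\|_{L_x^2}^2$. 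Since $V$ is Schwartz (in particular localized), Hölder gives $\|\langle t\rangle^{-2\epsilon_0}V^2 w\|_{L_x^{6/5}}\lesssim \|\langle t\rangle^{-\epsilon_0}Vw\|_{L_x^2}$, hence the damping forcing term lies in $N((0,\infty))$ via the pair $(6,2)$: $\|\langle t\rangle^{-2\epsilon_0}V^2w\|_{L_t^2 L_x^{6/5}}\lesssim \|w_0\|_{L_x^2}$. This is exactly the gain that makes the term perturbative, and it is the same mechanism already used in the proof of the Strichartz estimates \eqref{eq: dlsstri}--\eqref{eq: dlsstri2}.

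Next I would run the standard stability/perturbation argument. Let $v$ be the Dodson solution to \eqref{eq: mnls} with $v(0)=w_0$; by Theorem \ref{thm: dod}, $\|v\|_{S(\mathbb{R})}\le C(\|w_0\|_{L_x^2})=:R<\infty$. Writing $w = v + e$ (or, more cleanly, comparing $w$ with $v$ on a partition of $[0,\infty)$ into finitely many subintervals $I_j$ on each of which $\|v\|_{S(I_j)}$ is smaller than the stability threshold $\delta_0(R)$), the error solves a difference equation forced by $|v+e|^{4/3}(v+e) - |v|^{4/3}v$ plus the damping term $-i\langle t\rangle^{-2\epsilon_0}V^2 w$. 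On each $I_j$, Strichartz applied to \eqref{eq: mnls} (equivalently, using \eqref{eq: dlsstri2} for the damped propagator $H$) together with the mass-critical nonlinear estimate $\||f|^{4/3}f-|g|^{4/3}g\|_{N}\lesssim (\|f\|_{S}^{4/3}+\|g\|_{S}^{4/3})\|f-g\|_{S}$ closes a bootstrap for $\|e\|_{S(I_j)}$, provided the damping contribution $\|\langle t\rangle^{-2\epsilon_0}V^2 w\|_{N(I_j)}$ is finite — which is exactly the a priori bound from the previous paragraph (and it is even small on the tail, since $\int_T^\infty\|\langle t\rangle^{-\epsilon_0}Vw\|_{L_x^2}^2\,dt\to 0$). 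Summing the finitely many intervals yields $\|w\|_{S((0,\infty))}<\infty$, and then feeding this back into the Duhamel formula for \eqref{eq: dnls} together with \eqref{eq: dlsstri}, \eqref{eq: dlsstri2} upgrades to the stated bound $\|w\|_{L_t^2L_x^6\cap L_t^\infty L_x^2}\lesssim_{\|w_0\|_{L_x^2}}1$, and global existence forward in time follows from the finiteness of this norm by the usual continuation criterion.

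There is one genuinely delicate point: the a priori bound on $\int\langle t\rangle^{-2\epsilon_0}V^2|w|^2$ controls the damping forcing term globally in $L_t^2 L_x^{6/5}$ but \emph{not} with small norm on a fixed subinterval $I_j$ unless $I_j$ is a far tail; so the partition of $[0,\infty)$ must be chosen to simultaneously make $\|v\|_{S(I_j)}$ small (possible since $\|v\|_{S(\mathbb{R})}<\infty$) \emph{and}, on the finitely many "early" intervals where the damping is not small, one must instead treat the damping not perturbatively but by absorbing it into the damped propagator $H(t,s)$ — i.e. compare $w$ not with the free NLS solution but solve the damped \emph{linear} flow exactly and only perturb the nonlinearity. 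Concretely: write $w(t)=H(t,0)w_0 - i\int_0^t H(t,s)(|w|^{4/3}w)(s)\,ds$ and compare with $v(t)=H(t,0)w_0 - i\int_0^t H(t,s)(|v|^{4/3}v)(s)\,ds$ (this $v$ is not the Dodson solution but a "damped Dodson solution" — so one actually needs the scattering-size bound for \emph{that} object, which is itself what Lemma \ref{lem: gwpdnls} asserts). The clean way out, and the step I expect to be the main obstacle, is to prove directly — by a single global bootstrap on $[0,\infty)$ rather than an interval-by-interval stability argument — that $\|w\|_{S((0,\infty))}$ is finite, using Dodson's theorem only as a black box via the following device: apply the deterministic modified-stability statement (the 3D analogue of Proposition 2.6 of \cite{fan2019wong}, which is precisely set up to compare NLS with NLS-plus-a-forcing-term that is merely in a dual Strichartz space) with the forcing term $F = -i\langle t\rangle^{-2\epsilon_0}V^2 w \in N((0,\infty))$, whose norm is controlled a priori by $\|w_0\|_{L_x^2}$ via the dissipation identity. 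This reduces everything to quoting that modified-stability proposition together with Theorem \ref{thm: dod}, and the only real work is verifying the hypotheses of the proposition (finiteness, not smallness, of $\|F\|_N$), which the energy identity supplies. Once $\|w\|_{S}<\infty$ is in hand, the displayed bound \eqref{eq: gbdnls} is immediate from Strichartz applied once more, and global existence is the standard consequence.
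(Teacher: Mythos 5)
Your first paragraph and the first half of your second are exactly the paper's argument: the dissipation identity $\frac{d}{dt}\|w\|_{L_x^2}^2=-2\int\langle t\rangle^{-2\epsilon_0}V^2|w|^2$, the resulting global bound $\|\langle t\rangle^{-2\epsilon_0}V^2w\|_{L_t^2L_x^{6/5}((0,\infty))}\lesssim\|w_0\|_{L_x^2}$ via localization of $V$, and then Dodson's theorem plus mass-critical perturbation theory on finitely many subintervals. However, the ``delicate point'' you raise is not an actual obstruction, and the ``clean way out'' you substitute for it is where the genuine gap lies. Since the global $L_t^2L_x^{6/5}$ norm of the damping term is finite with a bound depending only on $\|w_0\|_{L_x^2}$, you may choose the partition \emph{adapted to $w$} (not fixed in advance) so that this norm is below any threshold $\epsilon$ on every subinterval, the number of subintervals being at most $C(\|w_0\|_{L_x^2})\epsilon^{-2}+1$; and any two finite partitions admit a common refinement, so there is no tension with simultaneously controlling the reference NLS solution. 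On each subinterval one compares $w$ with the solution of \eqref{eq: mnls} with data $w(t_j)$, whose mass is non-increasing, so Theorem \ref{thm: dod} applies uniformly; summing over the finitely many subintervals gives \eqref{eq: gbdnls}. This is precisely the paper's proof.

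The flawed step is your final claim that a single global application of a modified-stability statement works with ``finiteness, not smallness, of $\|F\|_N$''. Stability lemmas of this type --- Proposition \ref{prop: modistab}, its model Proposition 2.6 of \cite{fan2019wong}, and the standard long-time perturbation lemma for mass-critical NLS --- all require the error to lie below a threshold $\epsilon_M$ depending on the mass (and on the reference solution's spacetime size), and this is not a technicality: with a forcing of size $O(1)$ in dual Strichartz the nonlinear difference term $\||v+e|^{4/3}(v+e)-|v|^{4/3}v\|_{N}$ produces a contribution like $\|e\|_{S}^{7/3}$ that cannot be absorbed unless $\|e\|_S$ is already small, so the bootstrap you describe does not close on an interval where the forcing is merely finite. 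Mere finiteness of $\|F\|_{N((0,\infty))}$ is exactly what licenses the finite interval decomposition with per-interval smallness; it does not license a one-shot application. (You also correctly note that comparing with a ``damped Dodson solution'' would be circular --- the non-circular route is the interval-by-interval comparison with the genuine mass-critical NLS just described, with the damping absorbed into the propagator $H(t,s)$ only at the level of the linear Strichartz estimates \eqref{eq: dlsstri}--\eqref{eq: dlsstri2}, which is how the paper organizes the section.)
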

\begin{proof}
First, we note that $L_{t}^{\alpha}L_{x}^{\beta}$ is a Strichartz-type norm and if one removes the term $i\langle t \rangle^{-2\epsilon_0}V^2w$, the damped NLS model \eqref{eq: dnls} will become the mass-critical NLS. And Dodson \cite{dodson2012global} proved the global controls for the Strichartz norm of the solutions. 

We denote $\tilde{V}=\langle t \rangle^{-\epsilon_0} V$. Similar to the proof of Lemma 4.3, we have
\begin{equation}
    \frac{d}{dt}\int |w|^2dx=-\int \tilde{V}^2 |w|^2 dx,
\end{equation}
then by Fundamental Theorem of Calculus, we have
\begin{equation}
    \int_0^{+\infty} |\tilde{V}|^2|w|^2dxdt \lesssim \|w_0\|^2_{L^2_x}.
\end{equation}
Moreover, since $V$ is localized we obtain,
\begin{equation}
    \|\tilde{V}^2w\|_{L_x^{\frac{6}{5}}}\lesssim \|\tilde{V}w\|_{L_x^2}.
\end{equation}
Putting the above two estimates together, we have the global control for $i\tilde{V}^2w$ as follows
\begin{equation}
    \|i\tilde{V}^2w\|_{L_t^2L_x^{\frac{6}{5}}((0,+\infty)\times \mathbb{R}^3)}<\infty.
\end{equation}
We emphasize that the above global estimate relies on the fact that $i\langle t \rangle^{-2\epsilon_0}V^{2}w$ is a damping term in the damped NLS \eqref{eq: dnls}. This allows us to treat the term $i\tilde{V}^2w$ in a perturbative way. Then we split the whole time interval $(0,+\infty)$ into finite number of disjoint subintervals $\{I_i\}_i$ such that on each subinterval $I_i$, the error term $i\tilde{V}^2w$ is small enough in the following sense
\begin{equation}
    \|i\tilde{V}^2w\|_{L_t^2L_x^{\frac{6}{5}}(I_i \times \mathbb{R}^3)}<\epsilon.
\end{equation}
Then on every such time subinterval, we apply the  perturbation theory for the mass-critical NLS (see \cite{dodson2012global}) and sum the norms $L_{t}^{\alpha}L_{x}^{\beta}$ associated with the subintervals up to obtain \eqref{eq: gbdnls}. 
\end{proof}
We note that Lemma 4.2 implies the local theory for the damped NLS \eqref{eq: dls} by standard arguments. Furthermore, with the help of Lemma \ref{lem: gwpdnls}, we prove the following stability result.
\begin{lemma}\label{lem: stadnls}
Let $w$ solves \eqref{eq: dnls} in interval [a,b], with $w(a)=w_{0}$, and $\|w_{0}\|_{L_{x}^{2}}=M$, then there exists $\epsilon=\epsilon_{M}$, so that for all $\tilde{w}_{0}, e$ with
\begin{equation}
\|w_{0}-\tilde{w}_{0}\|<\epsilon, \|e\|_{L_{t}^{1}L_{x}^{2}([a,b]\times \mathbb{R}^3)}<\epsilon,
\end{equation}
one has solution to
\begin{equation}
\begin{cases}
i\tilde{w}_t-\Delta\tilde{w}+i\langle t \rangle^{-2\epsilon_0}V^{2}\tilde{w}=|\tilde{w}|^{\frac{4}{3}}\tilde{w}+e,\\
\tilde{w}(a)=\tilde{w}_{0}
\end{cases}
\end{equation}
in $[a,b]$ with estimate
\begin{equation}
\|w-\tilde{w}\|_{L_{t}^{\alpha}L_{x}^{\beta}\cap L_{t}^{\infty}L_{x}^{2} ([a,b]\times \mathbb{R}^3)}\lesssim_{M}(\|w(a)-\tilde{w}(a)\|_{L_{x}^{2}}+\|e\|_{N([a,b])})
\end{equation}
and in particular 
\begin{equation}
\|w-\tilde{w}\|_{L_{t}^{\alpha}L_{x}^{\beta}\cap L_{t}^{\infty}L_{x}^{2} ([a,b]\times \mathbb{R}^3)}\lesssim_{M} 1,
\end{equation}
where $(\alpha,\beta)$ is Strichartz pair.
\end{lemma}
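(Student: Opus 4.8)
The plan is to establish this by the standard long-time perturbation (stability) argument for the mass-critical nonlinearity, carried out with the damped propagator $H(t,s)$ in place of the free propagator $S(t)$. The structural inputs needed are all at hand. By Lemma \ref{lem: gwpdnls}, after extending $w$ forward in time it obeys the \emph{global} bound $\|w\|_{L_t^{10/3}L_x^{10/3}([a,\infty)\times\R^3)}\lesssim_M 1$, so the number of subintervals produced below depends only on $M$ and not on $b-a$; by \eqref{eq: dlsstri}--\eqref{eq: dlsstri2} the operator $H$ satisfies exactly the same homogeneous and inhomogeneous Strichartz estimates as $S$; and the energy identity $\frac{d}{dt}\int|w|^2=-\int\langle t\rangle^{-2\epsilon_0}V^2|w|^2\le 0$ supplies the $L_t^\infty L_x^2$ a priori bound for free. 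Thanks to the favorable sign of $V^2$, the damping term never costs anything here, it only helps.

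First I would fix a small absolute constant $\delta$ and split $[a,b]$ into $N=N(M)$ consecutive subintervals $I_j=[t_j,t_{j+1}]$, $t_0=a$, $t_N=b$, with $\|w\|_{L_t^{10/3}L_x^{10/3}(I_j)}\le\delta$, which is possible by Lemma \ref{lem: gwpdnls}. Setting $v=\ti w-w$, the difference solves
\[
iv_t-\Delta v+i\langle t\rangle^{-2\epsilon_0}V^2v=\bigl(|\ti w|^{4/3}\ti w-|w|^{4/3}w\bigr)+e,\qquad v(a)=\ti w_0-w_0,
\]
and the Duhamel formula based on $H(t,s)$ together with \eqref{eq: dlsstri}--\eqref{eq: dlsstri2} gives, on each $I_j$,
\[
\|v\|_{S(I_j)}\lesssim\|v(t_j)\|_{L_x^2}+\bigl\||\ti w|^{4/3}\ti w-|w|^{4/3}w\bigr\|_{N(I_j)}+\|e\|_{N(I_j)}.
\]
The pointwise bound $\bigl|\,|\ti w|^{4/3}\ti w-|w|^{4/3}w\,\bigr|\lesssim|v|^{7/3}+|v|\,|w|^{4/3}$, combined with H\"older in the dual mass-critical space $L_{t,x}^{10/7}$ (so that the $L_{t,x}^{10/3}$, hence $S(I_j)$, norm appears on the right), turns this into
\[
\|v\|_{S(I_j)}\lesssim\|v(t_j)\|_{L_x^2}+\|v\|_{S(I_j)}^{7/3}+\delta^{4/3}\|v\|_{S(I_j)}+\|e\|_{N(I_j)}.
\]

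Then I would induct on $j$: taking $\delta$ small enough to absorb $\delta^{4/3}\|v\|_{S(I_j)}$, a continuity argument shows that whenever $\|v(t_j)\|_{L_x^2}+\|e\|_{N(I_j)}$ lies below a threshold depending only on $\delta$ (hence only on $M$), one has $\|v\|_{S(I_j)}\lesssim\|v(t_j)\|_{L_x^2}+\|e\|_{N(I_j)}$, and in particular $\|v(t_{j+1})\|_{L_x^2}\le C\bigl(\|v(t_j)\|_{L_x^2}+\|e\|_{N(I_j)}\bigr)$ with $C$ absolute. Unrolling this recursion over the $N=N(M)$ subintervals yields $\|v(t_j)\|_{L_x^2}\lesssim_M\|v(a)\|_{L_x^2}+\sum_{k=1}^{N}\|e\|_{N(I_k)}$; since $N$ is finite, H\"older in the index $k$ gives $\sum_k\|e\|_{N(I_k)}\lesssim_M\|e\|_{N([a,b])}$, and summing the finitely many $\|v\|_{S(I_j)}$ likewise gives $\|v\|_{S([a,b])}\lesssim_M\|v(a)\|_{L_x^2}+\|e\|_{N([a,b])}$, which is the claimed estimate. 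All the inductive smallness hypotheses are met once $\epsilon=\epsilon_M$ is chosen small enough, since $\|v(a)\|_{L_x^2}<\epsilon$ and $\|e\|_{N(I_j)}\le\|e\|_{L_t^1L_x^2([a,b])}<\epsilon$; existence of $\ti w$ on all of $[a,b]$ then follows from the local theory for the damped NLS together with this a priori bound by the usual continuation argument, and ``$\lesssim_M 1$'' is immediate.

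The step I expect to require the most care is precisely this bookkeeping: the per-interval constant $C$ is absolute, but the recursion converts it into $C^{N(M)}$, so both the final constant and the admissible $\epsilon_M$ depend on $M$ through Dodson's global bound in Lemma \ref{lem: gwpdnls}, and one has to check that a single choice of $\epsilon_M$ simultaneously validates every smallness hypothesis along the induction. Everything else is the routine mass-critical stability computation, in which the damping term only ever helps.
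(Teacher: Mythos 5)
Your proposal is correct and follows essentially the same route as the paper, which omits the details and simply invokes the standard long-time perturbation argument (citing Lemma 3.10 of Colliander--Keel--Staffilani--Takaoka--Tao) built on the damped Strichartz estimates of Lemma 4.2 and the global spacetime bound of Lemma \ref{lem: gwpdnls}. Your interval decomposition in $L_{t,x}^{10/3}$, the Duhamel/continuity iteration with constants deteriorating like $C^{N(M)}$, and the observation $\|e\|_{N(I_j)}\leq\|e\|_{L_t^1L_x^2}$ are exactly the intended standard details.
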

\begin{remark}
All the implicit constants in Lemma \ref{lem: stadnls} are independent of the choice of interval $[a,b]$.
\end{remark}

\begin{proof}
We note that in Lemma 4.2, we have established the Strichartz estimates for the damped Schr\"odinger equation. Also, in Lemma \ref{lem: gwpdnls}, we have established a global spacetime bound. Then the proof follows in a standard way so we omit it. See Lemma 3.10 in  \cite{colliander2008global} (energy-critical NLS) for more details. 

We note that the difference of Lemma \ref{lem: stadnls} from Lemma 3.10 in \cite{colliander2008global} is that we already have the a-priori global bound established so we do not need further assumptions. (The authors assumed $L^{10}_{t,x}$ spacetime bound in Lemma 3.10 of \cite{colliander2008global}.)  
\end{proof}
\subsection{Modified stability}
We are ready to state an analogue of Proposition 2.6 in \cite{fan2019wong} as follows,
\begin{proposition}\label{prop: modistab}
Assume $w$ solves in $[a,b]$,
\begin{equation}
w(t)=H(t,a)w(a)-i\int_{a}^{t}H(t,s)w(s)ds+\eta(t)
\end{equation}
with 
\begin{equation}
\|w(t)\|_{L_{t}^{\infty}L_{x}^{2}}\leq M,
\end{equation}
and
\begin{equation}\label{eq: isint}
	\eta(a)=0.
\end{equation}
We note $(\alpha,\beta)$ is Strichartz pair and $\frac{7}{3} \leq \alpha \leq \frac{14}{3}$. Then, there exists $\epsilon_{M}, B_{M}>0$, such that if
\begin{equation}
\|\eta\|_{L_{t}^{\alpha}L_{x}^{\beta}([a,b]\times \mathbb{R}^3)}\leq \epsilon_{M},
\end{equation}
then
\begin{equation}\label{eq: extrakey}
	\|w(t)-\eta(t)\|_{L_{t}^{2}L_{x}^{6}\cap L_{t}^{\infty}L_{x}^{2}([a,b]\times \mathbb{R}^3)}\leq \frac{B_{M}}{2},
\end{equation}
and
\begin{equation}\label{eq: okper}
\|w\|_{L_{t}^{\alpha}L_{x}^{\beta}([a,b]\times \mathbb{R}^3)}\leq \frac{B_{M}}{2}+\epsilon_{M}\leq B_{M}.
\end{equation}
\end{proposition}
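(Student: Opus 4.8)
The plan is to realize $w$ as a small perturbation of the damped NLS \eqref{eq: dnls} and then invoke the stability result Lemma \ref{lem: stadnls}. First I would set $v := w - \eta$, so that $v(a) = w(a)$ by \eqref{eq: isint}, and rewrite the integral equation for $w$ in Duhamel form for the damped NLS: since $w$ solves $w(t) = H(t,a)w(a) - i\int_a^t H(t,s) w(s)\,ds + \eta(t)$, we see that $v$ solves the damped NLS equation \emph{with a forcing error} $e$, where $e$ collects two contributions — the genuine nonlinearity $|v|^{4/3}v$ evaluated on $v$ versus the term actually appearing (here one must be slightly careful: the ``$-i\int H(t,s)w(s)ds$'' schematic stands for the nonlinear Duhamel term plus lower-order pieces, so $e$ is essentially $|w|^{4/3}w$ transplanted onto $v$ minus $|v|^{4/3}v$, plus the $\eta$-dependent corrections). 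The point is that each term in $e$ is at least linear in $\eta$ (or in $|u|^{4/3}$-type multilinear expressions with at least one $\eta$ factor), so one controls $\|e\|_{N([a,b])}$ — equivalently $\|e\|_{L_t^1L_x^2([a,b])}$ after Hölder in time using the finite length is \emph{not} available, so one must stay in the scale-invariant dual Strichartz norm — by a power of $\|\eta\|_{L_t^\alpha L_x^\beta}$ times the a priori mass bound $M$ and the global damped-NLS spacetime bound from Lemma \ref{lem: gwpdnls}.

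The key steps, in order, are: (1) define $v = w-\eta$ and derive the perturbed damped-NLS equation it satisfies, identifying the error $e$; (2) estimate $\|e\|_{N([a,b])} \lesssim_M \|\eta\|_{L_t^\alpha L_x^\beta}^{\theta}$ for some $\theta>0$, using the fractional power structure of the mass-critical nonlinearity, the embedding $N \hookleftarrow L_t^{q'}L_x^{p'}$ for admissible $(p,q)$, and the constraint $\tfrac{7}{3}\le\alpha\le\tfrac{14}{3}$ (this range is exactly what makes the relevant Hölder exponents in the nonlinear estimate close, with $\tfrac{14}{3}$ the borderline for the diagonal $L_{t,x}$-type estimate and $\tfrac{7}{3}$ its dual companion); (3) choose $\epsilon_M$ small enough that $\|e\|_{N([a,b])} < \epsilon$, the threshold from Lemma \ref{lem: stadnls}, and also $\|\eta\|_{L_t^\alpha L_x^\beta} \le \epsilon_M$ directly controls the comparison of initial data trivially since $v(a)=w(a)$; (4) apply Lemma \ref{lem: stadnls} with the genuine solution $\tilde w$ of damped NLS with data $w(a)$ to get $\|v\|_{L_t^2L_x^6\cap L_t^\infty L_x^2([a,b])} \lesssim_M \|\tilde w(a)-v(a)\|_{L_x^2} + \|e\|_N \lesssim_M 1$, with the implicit constant $B_M/2$ independent of $[a,b]$ by the Remark after Lemma \ref{lem: stadnls}; (5) this is exactly \eqref{eq: extrakey}, and then \eqref{eq: okper} follows by $\|w\|_{L_t^\alpha L_x^\beta} \le \|v\|_{L_t^\alpha L_x^\beta} + \|\eta\|_{L_t^\alpha L_x^\beta}$, bounding the first term via the embedding of $L_t^2L_x^6\cap L_t^\infty L_x^2$ into every Strichartz norm, and the second by $\epsilon_M$.

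The main obstacle I expect is step (2): making the error estimate $\|e\|_{N([a,b])}\lesssim_M \|\eta\|^\theta$ genuinely \emph{interval-independent} and with a positive power of $\eta$. Because $[a,b]$ can be arbitrarily long, one cannot afford any $|b-a|$ factor, so the time integrations in bounding $|w|^{4/3}w - |v|^{4/3}v$ must be done entirely in Strichartz/dual-Strichartz norms, using the global bound $\|w\|_{L_t^\alpha L_x^\beta([a,b])}\lesssim_M 1$ (which itself comes from Lemma \ref{lem: gwpdnls} applied to $v$ plus the smallness of $\eta$ — so there is a mild bootstrap/continuity structure here, as flagged in the footnote after Lemma 4.2). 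One also has to handle the $\eta$-dependent correction terms hidden in the schematic ``$-i\int H(t,s)w(s)ds$''; these are lower order and localized (the $V^2$ and $V$ weights), so they are controlled using that $V$ is Schwarz together with the conserved mass, exactly as in the footnote preceding \eqref{eq: duhamel2}. The remaining steps are routine given Lemma \ref{lem: stadnls} and the Strichartz theory of Lemma 4.2.
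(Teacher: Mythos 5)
Your proposal is correct and follows essentially the same route as the paper: set $v=w-\eta$, observe that $v$ solves the damped NLS with forcing error $|v+\eta|^{\frac43}(v+\eta)-|v|^{\frac43}v$, control this error in the dual Strichartz norm using the smallness of $\eta$ together with a continuity/bootstrap argument (since the error depends on $v$ itself), and then invoke Lemma \ref{lem: gwpdnls} and Lemma \ref{lem: stadnls} to get the interval-independent bound \eqref{eq: extrakey}, with \eqref{eq: okper} following by the triangle inequality and the embedding $L_t^2L_x^6\cap L_t^\infty L_x^2\hookrightarrow L_t^\alpha L_x^\beta$. You also correctly read the schematic Duhamel term as the nonlinear one and identified the bootstrap on the Strichartz norm of $v$ as the point needing care, which is exactly how the paper closes the argument.
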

\begin{remark}
It should be noted here one needs control for the Strichartz norm of $\eta$ rather than its dual Strichartz norm. We also obtain more refined bound \eqref{eq: extrakey} compared to a direct generalization of Proposition 2.6 in \cite{fan2019wong}. Estimate \eqref{eq: extrakey} is important to derive the scattering asymptotic.
\end{remark}
\begin{proof}
The proof is essentially same as Proposition 2.6 in \cite{fan2019wong}, given one already obtains Lemma \ref{lem: gwpdnls} and Lemma \ref{lem: stadnls}. We give a rather detailed sketches for the reader and in particular explain how to derive \eqref{eq: extrakey}. Before we start, we first observe $L_{t}^{2}L_{x}^{6}\cap L_{t}^{\infty}L_{x}^{2}\hookrightarrow L_{t}^{\alpha}L_{x}^{\beta}$. 

The key idea of the proof is to observe if one let
\begin{equation}
	v=w-\eta  
\end{equation}
then $v$ solves 
\begin{equation}
	\begin{cases}
	iv_{t}-\Delta v+i\langle t \rangle^{-2\epsilon_0} V^{2}v=|v+\eta|^{\frac{4}{3}}(v+\eta)=|v|^{\frac{4}{3}}v+[|v+\eta|^{\frac{4}{3}}(v+\eta)-|v|^{\frac{4}{3}}v],\\
	v(a)=w(a)-\eta(a).
	\end{cases}
\end{equation}
Moreover, thanks to \eqref{eq: isint}, we have $\|v(a)\|_{L_x^2}=M$, and we can view the system as a perturbation of 
\begin{equation}\label{eq: poeq}
		iv_{t}-\Delta v+i\langle t \rangle^{-2\epsilon_0}V^{2}v=|v|^{\frac{4}{3}}v
\end{equation}
and apply Lemma \ref{lem: stadnls}. Note the perturbation term $[|v+\eta|^{\frac{4}{3}}(v+\eta)-|v|^{\frac{4}{3}}v]$ depends on the solution $v$ itself.

To make it rigorous, we need to apply a bootstrap argument (continuity argument). Let $C_{M}$ be the largest implicit constant in Lemma \ref{lem: stadnls}. First apply Lemma \ref{lem: gwpdnls}, we find $B_{1}(M)$ so that if $\tilde{v}$ solves \eqref{eq: poeq} with initial data $v(a)$ then
\begin{equation}
	\|\tilde{v}\|_{L_{t}^{2}L_{x}^{6}\cap L_{t}^{\infty}L_{x}^{2}([a,b]\times \mathbb{R}^3)}\leq B_{1,M}.
\end{equation}
Next, choose $\epsilon_{1,M}$ with parameter $10B_{1,M}$ by Lemma \ref{lem: stadnls}. Then choose $\epsilon_{M}$ so that
\begin{equation}
	10^{10}B_{1,M}^{\frac{4}{3}}\epsilon_{M}\ll \epsilon_{1,M}, \text{ and } C_{M}	10^{10}B_{1, M}^{\frac{4}{3}}\epsilon_{M}\ll B_{1,M}.
\end{equation}
Then one can bootstrap the assumption $\|v\|_{L_{t}^{2}L_{x}^{6}\cap L_{t}^{\infty}L_{x}^{2}([a,t]\times \mathbb{R}^3)}\leq 5B_{1,M}$ into 
\begin{equation}
     \|v\|_{L_{t}^{2}L_{x}^{6}\cap L_{t}^{\infty}L_{x}^{2}([a,t]\times \mathbb{R}^3)}\leq \frac{5}{2} B_{1,M}.
\end{equation}
 Thus, choose $B_{M}:=2B_{1,M}$, and the desired estimate follows. 
\end{proof}

\subsection{Concluding the proof of Lemma \ref{lem: boot}}
Now we are ready to prove Lemma \ref{lem: boot}. 
Recall, within any interval $[a,b]$, we have the Duhamel Formula based on damped NLS as follows,
\begin{equation}\label{eq: duhamel}
u(t)=H(t,a)u(a)+i\int_{a}^{t}H(t,s)(u(s)|u(s)|^{\frac{4}{3}})ds+i\int_{a}^{t}H(t,s)(\langle s \rangle^{-\epsilon_0} Vu(s))dB_{s}.
\end{equation}
We also recall the choice of Strichartz pair $(\alpha_0,\beta_0)$ satisfies the condition \eqref{eq: close} and
\begin{equation}
M_{0}^{*}(s_0,t)=\sup_{s_0 \leq r_{1}\leq r_{2}\leq t}\big\|\int_{r_{1}}^{r_{2}}H(t,s)(\langle t \rangle^{-\epsilon_0} Vu)dB_{s}\big\|_{L_{x}^{\beta_0}}.
\end{equation}
Then we split the proof of Lemma \ref{lem: boot} into the following two lemmas.
\begin{lemma}\label{lem: boot1}
Consider $u$ solves \eqref{eq: snls}. We have,
\begin{equation}\label{eq: decayb1}
\|M_0^{*}(s_0,t)\|_{L_{\omega}^{\alpha_0}L_{t}^{\alpha_0}([s_0,T])}\lesssim_{\|u_{0}\|_{L^{\infty}_{\omega}L_{x}^{2}}, \alpha_{0}, \epsilon_{0}} \langle s_0 \rangle^{-\epsilon_{0}/2}\|u\|_{L^{\alpha_{0}}_{\omega}L_{t}^{\alpha_{0}}L_{x}^{\beta_{0}}([s_0,T])}.
\end{equation}
\end{lemma}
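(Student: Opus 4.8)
\textbf{Proof plan for Lemma \ref{lem: boot1}.} The plan is to estimate the maximal stochastic term $M_0^*(s_0,t)$ by first peeling off the supremum in $r_1,r_2$ and then applying the Burkholder inequality of the preliminary section together with the dispersive bound \eqref{eq: esdisdls} for the damped propagator $H(t,s)$. The key point is that $H(t,s)$ maps $L_x^{\beta_0'}\to L_x^{\beta_0}$ with a gain of $(t-s)^{-3(1/2-1/\beta_0)}=(t-s)^{-1/2+\eta_0}$, which is integrable at the singularity $s=t$ precisely because $\beta_0<3$, i.e. because $\eta_0>0$. Since $V$ is Schwarz and $\|u(s)\|_{L_x^2}\lesssim 1$ pathwise by mass conservation, we have $\|\langle s\rangle^{-\epsilon_0}Vu(s)\|_{L_x^{\beta_0'}}\lesssim \langle s\rangle^{-\epsilon_0}\|u(s)\|_{L_x^{\beta_0}}$ after inserting $V$ to localize (Hölder: $\|Vf\|_{L_x^{\beta_0'}}\lesssim\|f\|_{L_x^{\beta_0}}$ since $\beta_0'<\beta_0$ and $V$ decays), so the stochastic integrand has size controlled by $\langle s\rangle^{-\epsilon_0}\|u(s)\|_{L_x^{\beta_0}}$.

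The concrete steps I would carry out, in order, are as follows. First, fix $t$ and bound $\|\int_{r_1}^{r_2}H(t,s)(\langle s\rangle^{-\epsilon_0}Vu)\,dB_s\|_{L_x^{\beta_0}}$; taking the supremum over $s_0\le r_1\le r_2\le t$ and then the $L_\omega^{\alpha_0}$ norm, apply the Burkholder lemma in the space $L_x^{\beta_0}$ (legitimate since $\alpha_0>4\ge 2$ and the integrand is adapted and continuous), obtaining
\begin{equation}
\big\|M_0^*(s_0,t)\big\|_{L_\omega^{\alpha_0}}\lesssim \Big\|\int_{s_0}^{t}\big\|H(t,s)(\langle s\rangle^{-\epsilon_0}Vu(s))\big\|_{L_x^{\beta_0}}^{2}\,ds\Big\|_{L_\omega^{\alpha_0/2}}^{1/2}.
\end{equation}
Second, insert the dispersive estimate $\|H(t,s)g\|_{L_x^{\beta_0}}\lesssim (t-s)^{-1/2+\eta_0}\|g\|_{L_x^{\beta_0'}}$ and the Hölder/localization bound $\|\langle s\rangle^{-\epsilon_0}Vu(s)\|_{L_x^{\beta_0'}}\lesssim\langle s\rangle^{-\epsilon_0}\|u(s)\|_{L_x^{\beta_0}}$, so the inner integral is $\lesssim \int_{s_0}^t (t-s)^{-1+2\eta_0}\langle s\rangle^{-2\epsilon_0}\|u(s)\|_{L_x^{\beta_0}}^2\,ds$. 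Third, take the $L_t^{\alpha_0}$ norm in $t$ over $[s_0,T]$ and commute it with the $L_\omega$ norm (Minkowski, since $\alpha_0/2\ge 1$); the $t$-integral is a fractional-integration/convolution-type operator with kernel $(t-s)^{-1+2\eta_0}$ — since $2\eta_0$ is small this kernel is in $L^1_{loc}$ and one applies Young's inequality (or Hardy–Littlewood–Sobolev, or just Schur's test using that $\int_0^\infty r^{-1+2\eta_0}\langle r\rangle^{-\delta}dr<\infty$ for suitable bookkeeping) to move the weight $\langle s\rangle^{-2\epsilon_0}$ outside. The upshot is a bound by $\langle s_0\rangle^{-\epsilon_0}$ (since $s\ge s_0$ throughout, $\langle s\rangle^{-2\epsilon_0}\le\langle s_0\rangle^{-\epsilon_0}\langle s\rangle^{-\epsilon_0}$, using half the decay to beat the constant and keeping half — this is where the factor $\langle s_0\rangle^{-\epsilon_0/2}$ rather than $\langle s_0\rangle^{-\epsilon_0}$ comes from, since one loses a bit in the Schur/Young step) times $\|\,\|u(s)\|_{L_x^{\beta_0}}^2\|_{L_\omega^{\alpha_0/2}L_t^{\alpha_0/2}([s_0,T])}^{1/2}=\|u\|_{L_\omega^{\alpha_0}L_t^{\alpha_0}L_x^{\beta_0}([s_0,T])}$.

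The main obstacle I expect is the bookkeeping in the third step: one must simultaneously handle the $L_t^{\alpha_0}$ norm outside, the convolution kernel $(t-s)^{-1+2\eta_0}$ which is \emph{almost} non-integrable, and extract a genuine decaying gain $\langle s_0\rangle^{-\epsilon_0/2}$ from the weight $\langle s\rangle^{-2\epsilon_0}$ while retaining enough decay (another $\langle s\rangle^{-\epsilon_0}$, say) for the convolution to close. The reason the scheme works is the separation of scales $\eta_0\ll\epsilon_0$ imposed in \eqref{eq: close}: the near-singularity of the kernel costs only $O(\eta_0)$, which is dominated by the available $\epsilon_0$ worth of time decay. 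I would organize this as: split $\langle s\rangle^{-2\epsilon_0}=\langle s_0\rangle^{-\epsilon_0/2}\cdot\big(\langle s_0\rangle^{\epsilon_0/2}\langle s\rangle^{-2\epsilon_0}\big)$ and note the second factor is $\le \langle s\rangle^{-3\epsilon_0/2}$, which together with the kernel $(t-s)^{-1+2\eta_0}$ (and $2\eta_0\ll 3\epsilon_0/2$) gives an operator bounded on $L_t^{\alpha_0/2}$ by a constant depending only on $\epsilon_0$; then pull out $\langle s_0\rangle^{-\epsilon_0/2}$ and take $L_\omega^{\alpha_0/2}$, which commutes through by Minkowski, yielding \eqref{eq: decayb1}. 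No step requires anything beyond the lemmas already proved, so the write-up is mostly careful exponent arithmetic.
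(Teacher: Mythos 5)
Your proposal is correct and follows essentially the same route as the paper: Burkholder with the supremum inside, the dispersive bound \eqref{eq: esdisdls} for $H(t,s)$ plus H\"older to localize with $V$, Minkowski to commute the $\omega$- and $t$-norms, and then Young/H\"older on the convolution with kernel $(t-s)^{-1+2\eta_0}$, using $\eta_0\ll\epsilon_0$ (the paper implements this by splitting the $s$-integral into $[s_0,t-1]$ and $[t-1,t]$, which is the same bookkeeping you describe). One small slip in your final organization: you pull $\langle s_0\rangle^{-\epsilon_0/2}$ out of the \emph{squared} integral $\int (t-s)^{-1+2\eta_0}\langle s\rangle^{-2\epsilon_0}\|u(s)\|_{L_x^{\beta_0}}^{2}ds$, which after the outer square root only yields $\langle s_0\rangle^{-\epsilon_0/4}$; to get the stated $\langle s_0\rangle^{-\epsilon_0/2}$ you should split $\langle s\rangle^{-2\epsilon_0}\leq\langle s_0\rangle^{-\epsilon_0}\langle s\rangle^{-\epsilon_0}$ inside that integral (as the paper does, and as your second paragraph in fact suggests), keeping the full $\langle s\rangle^{-\epsilon_0}$ for the integrability condition $\tilde q_1\epsilon_0>1$ in the Young/H\"older step — a harmless fix, since even $\langle s_0\rangle^{-\epsilon_0/4}$ would suffice for the continuity argument.
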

\begin{lemma}\label{lem: boot2}
Consider $u$ solves \eqref{eq: snls}. We have,
\begin{equation}\label{eq: gb1}
\|u\|_{L_{\omega}^{\alpha_0}L_{t}^{\alpha_0}L_{x}^{\beta_0}([0,T]\times \mathbb{R}^3)}\lesssim_{\|u_{0}\|_{L^{\infty}_{\omega}L_{x}^{2}}, \alpha_{0}, \epsilon_{0}} 1+\|M_{0}^{*}(t)\|_{L_{\omega}^{\alpha_0}L_{t}^{\alpha_0}([0,T])}.
\end{equation}

\end{lemma}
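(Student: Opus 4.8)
The plan is to globalize the pathwise partition argument of \cite{fan2018global}: partition $[0,T]$ into finitely many subintervals on which the stochastic forcing is small in the Strichartz norm, apply the modified stability Proposition \ref{prop: modistab} on each, and pass to the $L_{\omega}^{\alpha_{0}}$ norm only at the very end. We may assume the right-hand side of \eqref{eq: gb1} is finite (otherwise there is nothing to prove), so that $M_{0}^{*}(0,\cdot,\omega)\in L_{t}^{\alpha_{0}}([0,T])$ for a.e.\ $\omega$; from the global existence theory (cf.\ \eqref{eq: start} and \cite{fan2018global,DZhang}) we also have, for a.e.\ $\omega$, the qualitative finiteness $\|u\|_{L_{t}^{\alpha_{0}}L_{x}^{\beta_{0}}([0,T])}<\infty$ together with the pathwise mass bound $\|u(t)\|_{L_{x}^{2}}\leq M$ for all $t$. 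Fix such an $\omega$; all constants below depend only on $M$ (hence on $\|u_{0}\|_{L_{\omega}^{\infty}L_{x}^{2}}$) and on $\alpha_{0}$, never on $T$.

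First I would isolate the stochastic term. On any $[a,b]\subset[0,T]$ the damped-NLS Duhamel formula \eqref{eq: duhamel} reads $u(t)=H(t,a)u(a)+i\int_{a}^{t}H(t,s)(|u|^{\frac{4}{3}}u)\,ds+\eta_{a}(t)$, where $\eta_{a}(t):=i\int_{a}^{t}H(t,s)(\langle s\rangle^{-\epsilon_{0}}Vu(s))\,dB_{s}$ and $\eta_{a}(a)=0$. Taking $r_{1}=a$, $r_{2}=t$ in the definition of $M_{0}^{*}$, and noting that the supremum defining $M_{0}^{*}(a,t)$ runs over a subset of the one defining $M_{0}^{*}(0,t)$, we get $\|\eta_{a}(t)\|_{L_{x}^{\beta_{0}}}\leq M_{0}^{*}(a,t)\leq M_{0}^{*}(0,t)$, hence
\[
\|\eta_{a}\|_{L_{t}^{\alpha_{0}}L_{x}^{\beta_{0}}([a,b])}\leq\|M_{0}^{*}(0,\cdot)\|_{L_{t}^{\alpha_{0}}([a,b])}.
\]
The nonlinear term $\int_{a}^{t}H(t,s)(|u|^{\frac{4}{3}}u)\,ds$ is left inside the equation, to be absorbed by the NLS part of Proposition \ref{prop: modistab}; this is where the mass bound $\|u(t)\|_{L_{x}^{2}}\leq M$ enters.

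Next, with $\epsilon_{M}$ the smallness threshold of Proposition \ref{prop: modistab}, the absolutely continuous nondecreasing function $t\mapsto\int_{0}^{t}M_{0}^{*}(0,s)^{\alpha_{0}}\,ds$ lets us choose a measurable partition $0=a_{0}<a_{1}<\cdots<a_{J}=T$, $J=J(\omega)$, with $\|M_{0}^{*}(0,\cdot)\|_{L_{t}^{\alpha_{0}}([a_{j},a_{j+1}])}=\epsilon_{M}$ for $j\leq J-2$ and $\leq\epsilon_{M}$ for $j=J-1$, so that $J(\omega)\leq1+\epsilon_{M}^{-\alpha_{0}}\|M_{0}^{*}(0,\cdot)\|_{L_{t}^{\alpha_{0}}([0,T])}^{\alpha_{0}}$. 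On each $[a_{j},a_{j+1}]$ the pair $(u,\eta_{a_{j}})$ meets the hypotheses of Proposition \ref{prop: modistab} ($\eta_{a_{j}}(a_{j})=0$, $\|u(t)\|_{L_{t}^{\infty}L_{x}^{2}}\leq M$, $\|\eta_{a_{j}}\|_{L_{t}^{\alpha_{0}}L_{x}^{\beta_{0}}}\leq\epsilon_{M}$, and $\alpha_{0}$ lies in the admissible range $[\tfrac{7}{3},\tfrac{14}{3}]$), so \eqref{eq: okper} yields $\|u\|_{L_{t}^{\alpha_{0}}L_{x}^{\beta_{0}}([a_{j},a_{j+1}])}\leq B_{M}$. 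Summing $\alpha_{0}$-th powers and using the interval count,
\[
\|u\|_{L_{t}^{\alpha_{0}}L_{x}^{\beta_{0}}([0,T])}^{\alpha_{0}}=\sum_{j=0}^{J(\omega)-1}\|u\|_{L_{t}^{\alpha_{0}}L_{x}^{\beta_{0}}([a_{j},a_{j+1}])}^{\alpha_{0}}\leq J(\omega)\,B_{M}^{\alpha_{0}}\lesssim_{M}1+\|M_{0}^{*}(0,\cdot)\|_{L_{t}^{\alpha_{0}}([0,T])}^{\alpha_{0}}.
\]
Since the $\omega$-exponent and the $t$-exponent coincide (both equal $\alpha_{0}$), taking $\mathbb{E}[\,\cdot\,]$ of this pathwise inequality and then the $\alpha_{0}$-th root (via $(x+y)^{1/\alpha_{0}}\lesssim x^{1/\alpha_{0}}+y^{1/\alpha_{0}}$) gives \eqref{eq: gb1} with a constant independent of $T$. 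Together with Lemma \ref{lem: boot1} this establishes Lemma \ref{lem: boot}, and hence the spacetime bound \eqref{eq: spacetimebound3d}, by the continuity argument announced above.

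The analysis is otherwise routine, so I expect the main obstacle to be the bookkeeping around the \emph{random} partition: one must check that $J(\omega)$ and the $a_{j}(\omega)$ are measurable, that the terminal short interval contributes only the harmless additive constant, and --- crucially --- that commuting $\mathbb{E}[\,\cdot\,]$ past the summation introduces no hidden $T$-dependence (this is exactly what the $L_{\omega}^{\alpha_{0}}L_{t}^{\alpha_{0}}$ structure, with matching exponents, buys us). A secondary but essential point is verifying that \eqref{eq: duhamel} is genuinely of the perturbed damped-NLS form demanded by Proposition \ref{prop: modistab}, i.e.\ that $|u|^{\frac{4}{3}}u$ is exactly the unperturbed nonlinearity there, so that no part of it must be treated as an error term.
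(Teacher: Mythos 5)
Your proposal is correct and follows essentially the same route as the paper: a pathwise partition of $[0,T]$ into $J(\omega)\lesssim 1+\epsilon_{M}^{-\alpha_{0}}\|M_{0}^{*}\|_{L_{t}^{\alpha_{0}}}^{\alpha_{0}}$ subintervals on which $\|\eta\|_{L_{t}^{\alpha_{0}}L_{x}^{\beta_{0}}}\leq\epsilon_{M}$, an application of Proposition \ref{prop: modistab} on each to get the uniform bound $B_{M}$, and then integration in $\omega$ using that the time and probability exponents both equal $\alpha_{0}$. The only cosmetic difference is that the paper organizes the $\omega$-integration through a dyadic decomposition over the levels $\|M_{0}^{*}\|_{L_{t}^{\alpha_{0}}}\sim A$, whereas you take the expectation of the pathwise inequality directly; these are equivalent.
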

\begin{remark}
For convenience, we do not keep track of the implicit constant in the proof. We allow the constant depends on $\|u_{0}\|_{L^{\infty}_{\omega}L_{x}^{2}}, \alpha_{0}$ and $\epsilon_{0}$.
\end{remark}
\begin{proof}[Proof of Lemma \ref{lem: boot1}]
Via Burkholder inequality 
\begin{equation}
    \big\| M_0^{*}(s_0,t) \big\|^{\alpha_0}_{L_{\omega}^{\alpha_0}}\lesssim \mathbb{E}\big(\int_{s_0}^t \| H(t,s)(\langle s \rangle^{-\epsilon_0}Vu)\|^2_{L_x^{\beta_0}} ds\big)^{\frac{\alpha_0}{2}}. 
\end{equation}
Then using dispersive estimate and the fact $V$ is localized,
\begin{equation}
    \| H(t,s)V\langle s \rangle^{-\epsilon_0}u\|_{L_x^{\beta_0}}\lesssim (t-s)^{-3(\frac{1}{2}-\frac{1}{\beta_0})}\langle s \rangle^{-\epsilon_0}\|Vu\|_{L_x^{\beta_0^{'}}}\lesssim (t-s)^{-3(\frac{1}{2}-\frac{1}{\beta_0})}\langle s \rangle^{-\epsilon_0}\|u(s)\|_{L_{x}^{\beta_0}}.
\end{equation}
 To summarize, we have 
\begin{equation}
\|M_0^{*}(s_0,t)\|_{L_{\omega}^{\alpha_0}}\lesssim \left( \mathbb{E} (\int_{s_0}^{t}(t-s)^{-6(\frac{1}{2}-\frac{1}{\beta_0})} \langle s \rangle^{-2\epsilon_0} \|u(s)\|_{L_{x}^{\beta_0}}^{2}ds)^{\frac{\alpha_0}{2}}\right)^{\frac{1}{\alpha_0}}.
\end{equation}
Then we apply Minkowski inequality to obtain
\begin{equation}
\begin{aligned}
 &\big\| \big(\mathbb{E} (\int_{s_0}^{t}(t-s)^{-6(\frac{1}{2}-\frac{1}{\beta_0})}\langle s \rangle^{-2\epsilon_0}\|u(s)\|_{L_{x}^{\beta_0}}^{2}ds)^{\frac{\alpha_0}{2}}\big)^{\frac{1}{\alpha_0}}\big\|_{L_t^{\alpha_0}}=\big\| \big\|\int_{s_0}^{t}(t-s)^{-6(\frac{1}{2}-\frac{1}{\beta_0})}\langle s \rangle^{-2\epsilon_0}\|u(s)\|_{L_{x}^{\beta_0}}^{2}ds\big\|_{L_{\omega}^{\frac{\alpha_0}{2}}}^{\frac{1}{2}} \big\|_{L_t^{\alpha_0}}\\
 &\lesssim \big\| \big( \int_{s_0}^{t}(t-s)^{-6(\frac{1}{2}-\frac{1}{\beta_0})}\langle s \rangle^{-2\epsilon_0}\|u(s)\|_{L_{\omega}^{\alpha_0}L^{\beta_0}_x}^{2}ds \big)^{\frac{1}{2}} \big\|_{L_t^{\alpha_0}}\\
 &\lesssim \langle s_0 \rangle^{-\frac{\epsilon_0}{2}} \big\|  \int_{s_0}^{t}(t-s)^{-6(\frac{1}{2}-\frac{1}{\beta_0})}\langle s \rangle^{-\epsilon_0}\|u(s)\|_{L_{\omega}^{\alpha_0}L^{\beta_0}_x}^{2}ds  \big\|^{\frac{1}{2}}_{L_t^{\frac{\alpha_0}{2}}}.
 \end{aligned}
\end{equation}
Here we recall
\begin{equation}
    -6(\frac{1}{2}-\frac{1}{\beta_0}) =-1+2\eta_0,
\end{equation}
according to \eqref{eq: close}. Now we split the time interval $[s_0,t]$ into two parts and write
\begin{equation}
\aligned
  &\int_{s_0}^{t}(t-s)^{-6(\frac{1}{2}-\frac{1}{\beta_0})}\langle s \rangle^{-\epsilon_0}\|u(s)\|_{L_{\omega}^{\alpha_0}L^{\beta_0}_x}^{2}ds=\int_{s_0}^{t}(t-s)^{-1+2\eta_0}\langle s \rangle^{-\epsilon_0}\|u(s)\|_{L_{\omega}^{\alpha_0}L^{\beta_0}_x}^{2}ds\\
  &=   \int_{s_0}^{t-1}(t-s)^{-1+2\eta_0}\langle s \rangle^{-\epsilon_0}\|u(s)\|_{L_{\omega}^{\alpha_0}L^{\beta_0}_x}^{2}ds 
  + \int_{t-1}^{t}(t-s)^{-1+2\eta_0}\langle s \rangle^{-\epsilon_0}\|u(s)\|_{L_{\omega}^{\alpha_0}L^{\beta_0}_x}^{2}ds. 
  \endaligned
\end{equation}
Next we will apply Young's inequality for the two parts respectively.

For the first part, we denote $h_1(s)=1_{s\geq 1}(s)^{-1+2\eta_0}$ and $f(s)=1_{s\geq s_0} \langle s \rangle^{-\epsilon_0}\|u(s)\|_{L_{\omega}^{\alpha_0}L^{\beta_0}_x}^{2}$. Then the integral can be regarded as $h_1 \ast f(t)$. And it suffices to consider
\begin{equation}
    \|(h_1 \ast f)(t)\|_{L_t^{\frac{\alpha_0}{2}}}^{\frac{1}{2}}.
\end{equation}
By Young's inequality and the H\"older, we have
\begin{equation}
\aligned
    \|(h_1 \ast f)(t)\|_{L_t^{\frac{\alpha_0}{2}}}^{\frac{1}{2}} \lesssim \|h_1\|^{\frac{1}{2}}_{L_t^{p_1}}\|f\|^{\frac{1}{2}}_{L_t^{q_1}}\lesssim \|u(t)\|_{L_{\omega}^{\alpha_0}L_t^{\alpha_0}L_x^{\beta_0}}.
    \endaligned
\end{equation}
We note that we can choose $p_1,q_1$ satisfies (according to \eqref{eq: close})
\begin{equation}
    \aligned
    &\frac{2}{\alpha_0}+1=\frac{1}{p_1}+\frac{1}{q_1}, \quad \textmd{(Young's inequality)}\\
    &p_1 \cdot 6(\frac{1}{2}-\frac{1}{\beta_0})>1,\quad \textmd{(Integrability)} \\
    &\frac{1}{q_1}=\frac{1}{\tilde{q}_{1}}+\frac{2}{\alpha_0},\quad \textmd{(the H\"older)}\\
    &\tilde{q}_{1} \cdot \epsilon_0 >1,\quad \textmd{(Integrability)}.
    \endaligned
\end{equation}
For the second part, it is similar and easier. We denote $h_2(s)={1}_{0<s\leq 1}(s)^{-1+2\eta_0}$ and $f(s)={1}_{s\geq s_0} \langle s \rangle^{-\epsilon_0}\|u(s)\|_{L_{\omega}^{\alpha_0}L^{\beta_0}_x}^{2}$. Then the integral can be regarded as $h_2 \ast f(t)$. And it suffices to consider
\begin{equation}
    \|(h_2 \ast f)(t)\|_{L_t^{\frac{\alpha_0}{2}}}^{\frac{1}{2}}.
\end{equation}
By Young's inequality and the H\"older, we have
\begin{equation}
    \|(h_2 \ast f)(t)\|_{L_t^{\frac{\alpha_0}{2}}}^{\frac{1}{2}}\lesssim \|h_2\|^{\frac{1}{2}}_{L_t^{1}}\|f\|^{\frac{1}{2}}_{L_t^{\frac{\alpha_0}{2}}}\lesssim \|u(t)\|_{L_{\omega}^{\alpha_0}L_t^{\alpha_0}L_x^{\beta_0}}.
\end{equation}
We conclude that the above estimates imply,
\begin{equation}
\|M_0^{*}(s_0,t)\|_{L_{\omega}^{\alpha_0}L_t^{\alpha_0}} \lesssim \langle s_0 \rangle^{-\frac{\epsilon_{0}}{2}} \|u(t)\|_{L_{\omega}^{\alpha_0}L_t^{\alpha_0}L_x^{\beta_0}}.
\end{equation}
(According to the Young and the H\"older) At last, we note that it suffices to choose $\beta_0$ satisfying
\begin{equation}
    2\epsilon_0+6(\frac{1}{2}-\frac{1}{\beta_0})>1,
\end{equation}
which means
\begin{equation}
    \frac{3}{1+\epsilon_0}<\beta_0<3.
\end{equation}
Our assumption \eqref{eq: close} is enough.
\end{proof}

\begin{proof}[Proof of Lemma \ref{lem: boot2}]
Recall $M=\|u\|_{L_{\omega}^{\infty}L_{t}^{\infty}L_{x}^{2}}$ are fixed all time.
 Let $\epsilon_{M}$ be as in Proposition \ref{prop: modistab}. We will denote $M^{*}(0,t)$ by $M_0^{*}(t)$.
 
First note 
  \begin{equation}\label{eq: lebe}
	\sum_{A}A^{\alpha_0} \mathbb{P}\big(\omega:\|M_0^{*}(t)\|_{L_{t}^{\alpha_0}}\sim A \big) \lesssim \|M_0^{*}(t)\|_{L_{\omega}^{\alpha_0}L_{t}^{\alpha_0}}^{\alpha_0}+1.
\end{equation}
where $A$ ranges over all dyadic integers.

We fix $\omega$ such that $\|M_0^{*}(t)\|_{L_{t}^{\alpha_0}} \sim A$, and we proceed in a deterministic way.

 We  divide\footnote{We abuse notation a bit and don't distinguish $[a,\infty]$ and $[a,\infty)$.}  $[0,\infty]$ into $\cup_{j=1}^{J}[t_{j}, t_{j+1}]$ so that
\begin{equation}\label{eq: intervalnumber}
		J\lesssim  \frac{A^{\alpha_0}}{\epsilon_{M}^{\alpha_0}}+1  \text{ and } \|M_0^{*}(t)\|_{L_{t}^{\alpha_0}([t_{j}, t_{j+1}])}\leq \epsilon_{M}.
\end{equation}	
We write  Duhamel Formula for $u$ in $[t_{j}, t_{j+1}]$ based on damped NLS as follows, 
\begin{equation}
	u(t)=H(t,t_{j})u(t_{j})+i\int_{t_{j}}^{t}H(t,s)N(u(s))+\eta(t),
\end{equation}
where 
\begin{equation}
\eta(t):=\eta(t_{j},t):=i\int_{t_{j}}^{t}H(t,s)(\langle s \rangle^{-\epsilon_{0}}Vu)dB_{s}.
\end{equation}
Note that 
\begin{equation}
	\|\eta(t)\|_{L_{x}^{\beta_0}}\leq M_0^{*}(t),
\end{equation}
and in particular
\begin{equation}
\|\eta(t)\|_{L_{t}^{\alpha_{0}}([t_{j},t_{j+1}])}\leq \epsilon_{M}.
\end{equation}

Thus, by Proposition \ref{prop: modistab}, we have

\begin{equation}
\|u\|_{L_{t}^{\alpha_{0}}L_{x}^{\beta_{0}}([t_{j},t_{j+1}])}\leq B_{M}.
\end{equation}

Thus, we have for all $\omega$ such that $\|M_{0}^{*}(t)\|\sim A$, 
\begin{equation}
\|u\|_{L_{t}^{\alpha_{0}}L_{x}^{\beta_{0}}}^{\alpha_{0}}\lesssim B_{M}^{\alpha_0}[\frac{A^{\alpha_0}}{\epsilon_{M}^{\alpha_0}}+1 ]. 
\end{equation}

Integrate in $\omega$ with \eqref{eq: lebe} and desired result follows.
\end{proof}

\section{Proof of Lemma \ref{lem: m1}, Lemma \ref{lem: splitintotwo}}\label{sec: fisrtsplit}
We start with the proof of Lemma \ref{lem: m1}.

Recall 
\begin{equation}
M_1^{*}(t)=\sup_{0\leq r_{1}\leq r_{2}\leq t}\big\|\int_{r_{1}}^{r_{2}}H(t,s)(\langle s \rangle^{-\epsilon_0} Vu)dB_{s}\big\|_{L_{x}^{\beta_1}}.
\end{equation}
\begin{proof}
 We choose $\beta_{1}$ close to 3 enough so that 
 \begin{equation}\label{eq: b1close3}
 -6(\frac{1}{2}-\frac{1}{\beta_1}) =-1+2\eta_1. 
\end{equation}
Via Burkholder inequality 
\begin{equation}
    \big\| M_1^{*}(t) \big\|^{\alpha_1}_{L_{\omega}^{\alpha_1}}\lesssim_{\alpha_1} \mathbb{E}\big(\int_a^t \| H(t,s)(\langle s \rangle^{-\epsilon_0}Vu)\|^2_{L_x^{\beta_1}} ds\big)^{\frac{\alpha_1}{2}}. 
\end{equation}
Then using dispersive estimate and the fact $V$ is localized,
\begin{equation}
    \| H(t,s)(\langle s \rangle^{-\epsilon_0}Vu)\|_{L_x^{\beta_1}}\lesssim (t-s)^{-3(\frac{1}{2}-\frac{1}{\beta_1})}\langle s \rangle^{-\epsilon_0}\|Vu\|_{L_x^{\beta_1^{'}}}\lesssim (t-s)^{-3(\frac{1}{2}-\frac{1}{\beta_1})}\langle s \rangle^{-\epsilon_0}\|u(s)\|_{L_{x}^{\beta_0}}.
\end{equation}
To summarize, we have 
\begin{equation}
\|M_1^{*}(t)\|_{L_{\omega}^{\alpha_1}}\lesssim \left( \mathbb{E} (\int_{0}^{t}(t-s)^{-6(\frac{1}{2}-\frac{1}{\beta_1})} \langle s \rangle^{-2\epsilon_0} \|u(s)\|_{L_{x}^{\beta_0}}^{2}ds)^{\frac{\alpha_1}{2}}\right)^{\frac{1}{\alpha_1}}.
\end{equation}
Then we apply Minkowski inequality to obtain
\begin{equation}
\begin{aligned}
 &\big\| \big(\mathbb{E} (\int_{0}^{t}(t-s)^{-6(\frac{1}{2}-\frac{1}{\beta_1})}\langle s \rangle^{-2\epsilon_0}\|u(s)\|_{L_{x}^{\beta_0}}^{2}ds)^{\frac{\alpha_1}{2}}\big)^{\frac{1}{\alpha_1}}\big\|_{L_t^{\alpha_1}}=\big\| \big\|\int_{0}^{t}(t-s)^{-6(\frac{1}{2}-\frac{1}{\beta_1})}\langle s \rangle^{-2\epsilon_0}\|u(s)\|_{L_{x}^{\beta_0}}^{2}ds\big\|_{L_{\omega}^{\frac{\alpha_1}{2}}}^{\frac{1}{2}} \big\|_{L_t^{\alpha_1}}\\
 &\lesssim \big\| \big( \int_{0}^{t}(t-s)^{-6(\frac{1}{2}-\frac{1}{\beta_1})}\langle s \rangle^{-2\epsilon_0}\|u(s)\|_{L_{\omega}^{\alpha_1}L^{\beta_0}_x}^{2}ds \big)^{\frac{1}{2}} \big\|_{L_t^{\alpha_1}} \lesssim \|u(t)\|_{L_{t}^{\alpha_0}L_{\omega}^{\alpha_1}L_x^{\beta_0}} \lesssim \|u(t)\|_{L_{\omega}^{\alpha_0}L_{t}^{\alpha_0}L_x^{\beta_0}}.
 \end{aligned}
\end{equation}
Now we split the time interval $[0,t]$ into two parts and write
\begin{equation}
\aligned
  &\int_{0}^{t}(t-s)^{-6(\frac{1}{2}-\frac{1}{\beta_1})}\langle s \rangle^{-\epsilon_0}\|u(s)\|_{L_{\omega}^{\alpha_1}L^{\beta_0}_x}^{2}ds=\int_{0}^{t}(t-s)^{-1+2\eta_1}\langle s \rangle^{-\epsilon_0}\|u(s)\|_{L_{\omega}^{\alpha_1}L^{\beta_0}_x}^{2}ds\\
  &=   \int_{0}^{t-1}(t-s)^{-1+2\eta_1}\langle s \rangle^{-\epsilon_0}\|u(s)\|_{L_{\omega}^{\alpha_1}L^{\beta_0}_x}^{2}ds 
  + \int_{t-1}^{t}(t-s)^{-1+2\eta_1}\langle s \rangle^{-\epsilon_0}\|u(s)\|_{L_{\omega}^{\alpha_1}L^{\beta_0}_x}^{2}ds. 
  \endaligned
\end{equation}
Next we will apply Young's inequality for the two parts respectively.

For the first part, we denote $h_1(s)={1}_{s\geq 1}(s)^{-1+2\eta_1}$ and $f(s)={1}_{s\geq 0} \langle s \rangle^{-\epsilon_0}\|u(s)\|_{L_{\omega}^{\alpha_1}L^{\beta_0}_x}^{2}$. Then the integral can be regarded as $h_1 \ast f(t)$. And it suffices to consider
\begin{equation}
    \|(h_1 \ast f)(t)\|_{L_t^{\frac{\alpha_1}{2}}}^{\frac{1}{2}}.
\end{equation}
By Young's inequality, the H\"older and Minkowski inequality, we have
\begin{equation}
\aligned
    \|(h_1 \ast f)(t)\|_{L_t^{\frac{\alpha_1}{2}}}^{\frac{1}{2}} \lesssim \|h_1\|^{\frac{1}{2}}_{L_t^{p_1}}\|f\|^{\frac{1}{2}}_{L_t^{q_1}}\lesssim \|u(t)\|_{L_{t}^{\alpha_0}L_{\omega}^{\alpha_1}L_x^{\beta_0}} \lesssim \|u(t)\|_{L_{\omega}^{\alpha_1}L_{t}^{\alpha_0}L_x^{\beta_0}} \lesssim \|u(t)\|_{L_{\omega}^{\alpha_0}L_{t}^{\alpha_0}L_x^{\beta_0}}.
    \endaligned
\end{equation}
We emphasize that for the last inequality, the H\"older is applied since $\alpha_1<\alpha_0$ and the probability space has finite measure. We note that we can choose $p_1,q_1$ satisfies
\begin{equation}
    \aligned
    &\frac{2}{\alpha_1}+1=\frac{1}{p_1}+\frac{1}{q_1}, \quad \textmd{(Young's inequality)}\\
    &p_1 \cdot 6(\frac{1}{2}-\frac{1}{\beta_1})>1,\quad \textmd{(Integrability)} \\
    &\frac{1}{q_1}=\frac{1}{\tilde{q}_{1}}+\frac{2}{\alpha_0},\quad \textmd{(the H\"older)}\\
    &\tilde{q}_{1} \cdot \epsilon_0 >1,\quad \textmd{(Integrability)}.
    \endaligned
\end{equation}
For the second part, it is similar and easier. We denote $h_2(s)={1}_{0<s\leq 1}(s)^{-1+2\eta_1}$ and $f(s)={1}_{s\geq 0} \langle s \rangle^{-\epsilon_0}\|u(s)\|_{L_{\omega}^{\alpha_1}L^{\beta_0}_x}^{2}$. Then the integral can be regarded as $h_2 \ast f(t)$. And it suffices to consider
\begin{equation}
    \|(h_2 \ast f)(t)\|_{L_t^{\frac{\alpha_1}{2}}}^{\frac{1}{2}}.
\end{equation}
By Young's inequality, the H\"older and Minkowski inequality, we have
\begin{equation}
    \|(h_2 \ast f)(t)\|_{L_t^{\frac{\alpha_1}{2}}}^{\frac{1}{2}}\lesssim \|h_2\|^{\frac{1}{2}}_{L_t^{1}}\|f\|^{\frac{1}{2}}_{L_t^{\frac{\alpha_1}{2}}}\lesssim \|u(t)\|_{L_{t}^{\alpha_0}L_{\omega}^{\alpha_1}L_x^{\beta_0}} \lesssim \|u(t)\|_{L_{\omega}^{\alpha_1}L_{t}^{\alpha_0}L_x^{\beta_0}} \lesssim \|u(t)\|_{L_{\omega}^{\alpha_0}L_{t}^{\alpha_0}L_x^{\beta_0}}.
\end{equation}
In summary, the above estimates imply
\begin{equation}
\|M_1^{*}(t)\|_{L_{\omega}^{\alpha_1}L_{t}^{\alpha_1}}\lesssim 1.
\end{equation}
\end{proof}
We now turn to the proof of Lemma \ref{lem: splitintotwo}.
\begin{proof}
As in the proof of Lemma \ref{lem: boot2}, Let $\epsilon_{M}$ be as in Proposition \ref{prop: modistab}.  

Fix $\omega$ such that $\|M^{*}_{0}(t)\|_{L_{t}^{\alpha_{0}}}\sim A$, and we 
  divide $[0,\infty)$ into $\cup_{j=1}^{J}[t_{j}, t_{j+1}]$ so that \eqref{eq: intervalnumber} holds.

In each $[t_{j}, t_{j+1}]$
\begin{equation}
	u(t)=H(t,t_{j})u(t_{j})+i\int_{t_{j}}^{t}H(t,s)N(u(s))+\eta(t_{j},t)
\end{equation}
where 
\begin{equation}
\eta(t_{j},t):=i\int_{t_{j}}^{t}H(t,s)(\langle s \rangle^{-\epsilon_{0}}Vu)dB_{s}.
\end{equation}

We will let $u_{2}:=\eta(t_{j},t)$ on each $[t_{j}, t_{j+1}]$ and $u_{1}=u-u_{2}$.

Note one has pointwise control 
\begin{equation}
\|u_{2}\|_{L_{x}^{\beta_{1}}}\leq M^{*}_{1}(t),
\end{equation}
and thus \eqref{eq: fisrtestimateforu2} holds.

Thus, by Proposition \ref{prop: modistab} on $[t_{j},t_{j+1}]$, we have
\begin{equation}\label{eq: bdjj1}
	\|u_1\|_{L_{t}^{2}L_{x}^{6}\cap {L_{t}^{\infty}L_{x}^{2}}([t_{j}, t_{j+1}]\times \mathbb{R}^3)}\leq B_{M}.
\end{equation}
Summing up all the  time subintervals, recall \eqref{eq: intervalnumber}, we have
\begin{equation}
\|u_1\|^{2}_{L_{t}^{2}L_{x}^{6}([0,\infty])}\lesssim B_{M}^{2}[\frac{A^{\alpha_0}}{\epsilon_{M}^{\alpha_0}}+1 ].
\end{equation}
Integration in $\omega$, and recall \eqref{eq: lebe}
\begin{equation}\label{eq: sum1}
	\|u_1\|_{L_{\omega}^{2}L_{t}^{2}L_{x}^{6}}^{2}\lesssim \sum_{A}B_{M}^{2}[\frac{A^{\alpha_0}}{\epsilon_{M}^{\alpha_0}}+1 ] 	\mathbb{P}\big(\omega:\|M_0^{*}(t)\|_{L_{t}^{\alpha_0}}\sim A \big) \lesssim \|M^{*}_{0}(t)\|_{L_{\omega}^{\alpha_{0}}L_{t}^{\alpha_{0}}}^{\alpha_{0}}+1.
\end{equation}
Thus, \eqref{eq: firstestiamteforu1} follows.

We also point out via \eqref{eq: bdjj1}, we have $u_{1}\in L_{t}^{\infty}L_{t}^{2}L_{x}^{2}$, and  \eqref{eq: firstimprove} follows from interpolation and the fact $V$ is localized.
\end{proof}

\section{Proof of Lemma \ref{lem: mn}, Lemma \ref{lem: splitagain} and the derivation of scattering asymptotics}\label{sec: conclude}
The proof of Lemma \ref{lem: mn} is similar as Lemma \ref{lem: m1}, we simply replace the control $\|u\|_{L_{\omega}^{\alpha_{0}}L_{t}^{\alpha_{0}}}$ via
$\|V^{\frac{1}{2}}u\|_{L_{\omega}^{\alpha_{n}}L_{t}^{\alpha_{n}}L_{x}^{2}}$ and observe $Vu=V^{\frac{1}{2}}V^{\frac{1}{2}}u$.

The proof of Lemma \ref{lem: splitagain} is similar to the proof of Lemma \ref{lem: splitintotwo}, we still based on $M^{*}_{0}$ to divide small intervals, but we use $M_{n}$ to control $u_{2}$.

We finally explain how Lemma \ref{lem: summary} implies the desired scattering dynamic.
We start with the proof of \eqref{eq: scattering3d}.
Write (the usual) Duhamel Formula as 
\begin{equation}
	\begin{aligned}
	u(t,x)=&S(t)u_{0}-i\int_{0}^{t}S(t-s)N(u(s))ds\\
	-&i\int_{0}^{t}S(t-s)(V(x)u(s)\langle s \rangle^{-\epsilon_0})dB_{s}-\frac{1}{2}\int_{0}^{t}S(t-s)(\langle s \rangle^{-2\epsilon_0}V^{2}u(s))ds.
	\end{aligned}
\end{equation}

Note $e^{it\Delta}$ is unitary and 
\begin{equation}
\begin{aligned}
	e^{-it\Delta}u(t,x)=&S(t)u_{0}-i\int_{0}^{t}S(-s)N(u(s))ds\\
	-&i\int_{0}^{t}S(-s)(V(x)u(s)\langle s \rangle^{-\epsilon_0})dB_{s}-\frac{1}{2}\int_{0}^{t}S(-s)(\langle s \rangle^{-2\epsilon_0}V^{2}u(s))ds.
	\end{aligned}
\end{equation}
Since $N(u)$ is in $L_{\omega}^{\frac{3\alpha_{0}}{7}}L_{t}^{\frac{3\alpha_{0}}{7}}L_{x}^{\frac{3\beta_{0}}{7}}$ and $(\frac{3\alpha_{0}}{7}, \frac{3\beta_{0}}{7})$ is in the dual of Strichartz pair, thus $i\int_{0}^{t}S(-s)N(u(s))ds$ converges in $L_{\omega}^{\rho^{*}}L_{x}^{2}$ for some $\rho^{*}>1.$

Similarly, we have $\langle s \rangle^{-2\epsilon_{0}}V^{2}u$ is in $L_{\omega}^{2}L_{t}^{2}L_{x}^{\frac{6}{5}}$  and $\langle s \rangle^{-\epsilon_{0}}Vu$ is in $L_{\omega}^{2}L_{t}^{2}L_{x}^{2}$ since $V$ is localized and thanks to \eqref{eq: fffinal}.

The first fact handles $-\frac{1}{2}\int_{0}^{t}S(-s)(\langle s \rangle^{-2\epsilon_0}V^{2}u(s))ds.$ via Strichartz estimate and the second fact handles 
$i\int_{0}^{t}S(-s)(V(x)u(s)\langle s \rangle^{-\epsilon_0})dB_{s}$ via Burkholder.

Note since $u\in L_{\omega}^{\infty}L_{t}^{\infty}L_{x}^{2}$, we have $u^{+}\in L_{\omega}^{\infty}L_{x}^{2}$.

Thus \eqref{eq: scattering3d} follows and so is \eqref{eq: conpro}.

To upgrade to the almost sure convergence, observe sine almost surely $N(u)$ is 
in $L_{t}^{\frac{3\alpha_{0}}{7}}L_{x}^{\frac{3\beta_{0}}{7}}$ and $\langle s \rangle^{-2\epsilon_{0}}V^{2}u$ is in $L_{t}^{2}L_{x}^{\frac{6}{5}}$, we only need to handle the Stochastic term.
But it is a martingale term and Burkholder gives
\begin{equation}
\lim_{t\rightarrow\infty}\big\|\sup_{a\geq b\geq t}\int_{a}^{b}S(-s)(\langle s \rangle^{-\epsilon_{0}}u)dB_{s}\big\|_{L_{\omega}^{2}L_{x}^{2}}=0.
\end{equation}
Thus, almost sure convergence follows. q.e.d
\bibliographystyle{amsplain}
\bibliographystyle{plain}
\bibliography{BG}

\end{document}